\newcommand{\mc}{\mathcal}
\newcommand{\pt}{\partial}
\newcommand{\br}{\mathbb{R}}
\newcommand{\bt}{\mathbb{T}}
\newcommand{\e}{\varepsilon}
\renewcommand{\(}{\left(}
\renewcommand{\)}{\right)}
\renewcommand{\[}{\left[}
\renewcommand{\]}{\right]}
\newcommand{\bb}{\mathbb}
\newtheorem{thm}{Theorem}
\newtheorem{lem}[thm]{Lemma}
\newtheorem{prop}[thm]{Proposition}
\newtheorem{defi}[thm]{Definition}
\newtheorem{remark}[thm]{Remark}
\newtheorem{hyp}[thm]{Assumption}
\def\be{\begin{equation}}
\def\ee{\end{equation}}
\def\bea{\begin{eqnarray}}
\def\eea{\end{eqnarray}}
\newcommand{\RR}{\mathbb{R}}
\newcommand{\TT}{\mathbb{T}}
\numberwithin{thm}{section}
\numberwithin{equation}{section}
\newcommand*\di{\mathop{}\!\mathrm{d}}
\title{Global well-posedness for the Vlasov-Poisson system with\\ massless electrons in the 3-dimensional torus}
\author{
Megan Griffin-Pickering 
  \thanks{Durham University, Department of Mathematical Sciences, Lower Mountjoy, Stockton Road, Durham DH1 3LE, UK.  Email: \textsf{megan.k.griffin-pickering@durham.ac.uk}}
  \and
Mikaela Iacobelli
  \thanks{ETH Zurich, Department of Mathematics, Ramistrasse 101, 8092 Zurich, Switzerland.\newline Email: \textsf{mikaela.iacobelli@math.ethz.ch}}
}
\begin{document}

\maketitle

\begin{abstract}
The Vlasov-Poisson system with massless electrons (VPME) is widely used in plasma physics to model the evolution of ions in a plasma. It differs from the Vlasov-Poisson system (VP) for electrons in that the Poisson coupling has an exponential nonlinearity that creates several mathematical difficulties. In particular, while global well-posedness in 3D is well understood in the electron case, this problem remained completely open for the ion model with massless electrons. The aim of this paper is to fill this gap by proving uniqueness for VPME in the class of solutions with bounded density, and global existence of solutions with bounded density for a general class of initial data, generalising all the previous results known for VP.
\end{abstract}

\section{Introduction}

In this article, we study a kinetic model for the ions in a dilute plasma.
A plasma is a ionised gas, in which gas particles have dissociated into ions and electrons. The ions are positively charged, while the electrons are negatively charged and have a much smaller mass than the ions. 

To model a fully ionised plasma one should consider a coupled system involving both ions and electrons. However, since the masses of the two species have very different orders of magnitude, there is a separation between the timescales on which each species evolves. From the point of view of the electrons, the ions are very heavy and so slow-moving. For this reason, it is common to assume that the ions are stationary over the interval of observation.  If magnetic effects are also neglected, this leads to the well-known \textit{Vlasov-Poisson system}. This system is often considered either on the whole space or on the flat torus, and in this paper we will focus on the latter case:
\begin{equation}
\label{vp}
(VP):= \left\{ \begin{array}{ccc}\pt_t f+v\cdot \nabla_x f+ E\cdot \nabla_v f=0,  \\
E=-\nabla U, \\
\Delta U=1- \int_{\br^d} f\, dv= 1- \rho_f,\\
f\vert_{t=0}=f_{0}\ge0,\ \  \int_{\bt^d \times \br^d} f_{0}\,dx\,dv=1.
\end{array} \right.
\end{equation}
Here the unknown $f = f(t,x,v)$ is a probability density describing the distribution of electrons at time $t,$ position $x$, and velocity $v,$ with $(x,v) \in \bt^d\times\br^d$. The Vlasov-Poisson system thus describes the evolution of the electrons under the influence of the electrostatic potential $U$ induced by the charge distribution of the entire plasma. This encodes the fact that the long-range effect of the potential is dominant over the effect of collisions between the electrons, and describes the electrostatic regime in which magnetic effects may be neglected.

\noindent Observe that the electric field $E$ can be represented in the form
\be
E = - \nabla G \ast \rho_f,
\ee
where $G$ is the Green kernel of the negative Laplacian on the torus, that is
\be \label{def:G}
- \Delta G = \delta_0 - 1.
\ee
The Coulomb kernel $K = -\nabla G$ has a strong singularity of order $|x|^{-(d-1)}$ at $x=0$, and its derivative $\nabla^2 G$ thus has a non-integrable singularity.
This is the reason why the study of the well-posedness theory of the Vlasov-Poisson system is mathematically challenging. Global-in-time classical solutions have been constructed under various conditions on the initial data (see for example \cite{Bardos-Degond, BR, Lions-Perthame, Pfaffelmoser, Schaeffer, Ukai-Okabe}), while global-in-time weak solutions were constructed in \cite{Arsenev} and \cite{Horst-Hunze} for $L^p$ initial data (see also \cite{BD85, BDG86}). However, uniqueness is not known to hold in general for weak solutions. An important contribution to the uniqueness theory was made by Loeper \cite{Loep}, who showed uniqueness for solutions of \eqref{vp} with bounded density.

\begin{paragraph}{The Vlasov-Poisson system with massless electrons, or Vlasov-Poisson system for ions.}

In this paper, we will consider a different model for plasma where the unknown is the repartition function of ions, instead of the one for the electrons. This model has been introduced to take into account the dramatically different order of magnitude between ions and electrons. Indeed, the electrons move much more quickly than the ions, and therefore undergo collisions much more frequently. Consequently, they approach thermodynamic equilibrium rapidly. One therefore assumes that the electrons are thermalised, obeying a Maxwell-Boltzmann law. Then their spatial density is given by $e^U$ and the induced electric field obeys a nonlinear Poisson equation with exponential nonlinearity.
This leads to the \textit{Vlasov-Poisson system with massless electrons} (VPME):
\begin{equation}
\label{vpme}
(VPME):= \left\{ \begin{array}{ccc}\pt_t f+v\cdot \nabla_x f+ E\cdot \nabla_v f=0,  \\
E=-\nabla U, \\
\Delta U=e^{U}- \int_{\br^d} f\, dv=e^{U}- \rho_f,\\
f\vert_{t=0}=f_{0}\ge0,\ \  \int_{\bt^d \times \br^d} f_{0}\,dx\,dv=1.
\end{array} \right.
\end{equation}

This system can be derived, at least formally, as the limiting regime of a coupled system of ions and electrons, in the \textit{massless electrons} limit where the ratio $m_e/m_i$ between the electron and ion masses tends to zero. Bardos, Golse, Nguyen and Sentis \cite{BGNS18} studied this limit, considering coupled systems of the form
\be \label{eq:VP-coupled}
\begin{cases}
\partial_t f_i + v \cdot \nabla_x f_i + \frac{q_i}{m_i} E \cdot \nabla_v f_i = 0, \\
\partial_t f_e + v \cdot \nabla_x f_e + \frac{q_e}{m_e} E \cdot \nabla_v f_e = C(m_e) Q(f_e), \\
\nabla_x \times E = 0, \quad
\epsilon_0 \nabla_x \cdot E = q_i \rho[f_i] + q_e \rho[f_e] ,
\end{cases}
\ee
where $Q$ denotes a collision operator such as a BGK or Boltzmann operator. Under suitable assumptions on $C(m_e)$ and the existence of sufficiently regular solutions for the coupled system, they derive that, in the limit as $m_e/m_i$ tends to zero, the electrons indeed take on a Maxwell-Boltzmann distribution, and the system \eqref{eq:VP-coupled} converges to a system similar to \eqref{vpme}, but with a time-dependent electron temperature. In a similar vein, Bouchut and Dolbeault \cite{Bouchut-Dolbeault95} studied the long time limit (which in this setting is closely related to the massless electrons limit) of a single-species Vlasov-Poisson model with a Fokker-Planck collision term.
See also Herda \cite{Herda16} for a study of the massless electron limit in the case where an external magnetic field is also applied, leading to a fluid model for the electrons coupled with a kinetic model for ions.

In this paper, our focus will be on studying the VPME system \eqref{vpme}.
The nonlinearity in the Poisson equation is the key difference between ion and electron Vlasov-Poisson systems, and a source of additional mathematical richness.
Due to the difficulties created by this nonlinear coupling, the VPME system has been studied less widely than the electron model. However, global weak solutions were constructed by Bouchut \cite{Bouchut} in the three dimensional case.
In this paper we investigate the global well-posedness for the VPME system in dimension $d=2,3$.

More precisely, we begin by investigating the uniqueness of solutions and, in the spirit of the work of Loeper \cite{Loep} for the electron VP system, we show uniqueness for solutions of the VPME system with bounded density. 
This extension is very far from trivial since the exponential term $e^U$ in the Poisson equation creates several nonlinear effects.
The key estimates that allow us to achieve this result are obtained in Section \ref{sec:electric}.
More specifically, the first important step is to write the electric field $E$ as the sum of the electric field in VP, that we denote by $\bar E$, and a remainder $\widehat E$.
Then, in Proposition \ref{prop:regU}, we develop a series of regularity estimates on the electric fields $\bar E$ and $\widehat E$ that depend only on the $L^{(d+2)/d}$-norm of the density $\rho_f$. This is crucial since the latter norm on $\rho_f$ can be controlled uniformly (in time) thanks to our assumptions on the initial data (see Lemma \ref{lem:rho-Lp}).

With these estimates at our disposal, in Section \ref{sec:stab} we are able to perform a delicate Gronwall-type argument with respect to the Wasserstein distance in order to prove uniqueness and stability of solutions with bounded density. Central for this argument are the results from Proposition \ref{prop:Ustab}, showing quantitative stability estimates on the electric fields $\bar E$ and $\widehat E$ with respect to the density $\rho$.
This concludes the proof of uniqueness for solutions of the VPME system with bounded density. 

The remainder of the paper is devoted to finding sufficient conditions on the initial data that guarantee the global existence of solutions with bounded densities. As mentioned before, this kind of problem has been widely investigated in the setting of the Vlasov-Poisson system for electrons.
The key point here is that, thanks to the results obtained in Section \ref{sec:electric}, we are able to deal with the additional part $\widehat E$ of the electric field. 
In particular, in Sections \ref{sec:moments} and \ref{sec:construction} we prove propagation of moments of sufficiently high order to guarantee the existence of solutions with uniformly bounded density (see Theorem \ref{thm:existence}). It is worth mentioning that our approach is robust. Indeed, a similar strategy could be used to extend other known well-posedness results for the Vlasov-Poisson system to the massless electron case; for example, in a forthcoming paper we consider the case where the VPME system is posed on the whole space. Moreover, thanks to this well-posedness result new advances have been made regarding quasineutral and mean-field limits for the VPME system \cite{GPI20}.

\end{paragraph}

\section{Results and Outline}

\subsection{Main Result}

The main result of this paper is the global well-posedness of the VPME system \eqref{vpme}, posed on the torus $\TT^d,$ $d=2,3$, for large data with sufficiently rapid decay at infinity in the velocity variable. This is stated precisely in the following theorem.

\begin{thm} \label{thm:main}
Let $d = 2, 3$. Consider an initial datum $f_0 \in L^1 \cap L^\infty(\bt^d \times \br^d)$ satisfying
\be
f_0(x,v) \leq \frac{C_0}{1 + |v|^{k_0}},\quad\mbox{for some}\,\, k_0 > d, \qquad \int_{\TT^d \times \RR^d} |v|^{m_0} f_0(x,v) \di x \di v < + \infty,\quad\mbox{for some}\,\, m_0 > d(d-1) .
\ee
Then there exists a global-in-time weak solution $f \in C([0,\infty); \mc{P}(\bt^d \times \br^d))$ of the VPME system \eqref{vpme} with initial data $f_0$. This is the unique solution of \eqref{vpme} with initial datum $f_0$ such that 
$$
\rho_f \in L^\infty_{\text{loc}}([0,+\infty) ; L^\infty(\TT^d) ).
$$
In addition, if $f_0$ has compact support, then $f(t)$ has compact support for all times, with a bound on the size of the support which is locally uniform in time.
\end{thm}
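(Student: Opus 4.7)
\medskip

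The plan is to combine the uniqueness and existence strategies sketched by the authors in the introduction, both of which rely critically on the decomposition $E = \bar{E} + \widehat{E}$, where $\bar E = -\nabla \bar U$ solves the \emph{linear} Poisson problem $\Delta \bar U = -\rho_f$ (as in classical VP) and $\widehat E = -\nabla \widehat U$ absorbs the exponential correction through $\Delta \widehat U = e^{\bar U + \widehat U} - 1$. The point of this splitting is that $\bar E$ inherits the familiar Loeper-type estimates from VP, while $\widehat U$ can be shown (via the maximum principle / energy estimates applied to a semilinear elliptic equation with monotone nonlinearity) to be substantially more regular than $\bar U$, with bounds that depend only on $\|\rho_f\|_{L^{(d+2)/d}}$. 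This latter norm is controlled uniformly in time by the conservation-of-energy-type Lemma \ref{lem:rho-Lp}.

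For \textbf{uniqueness} in the class $\rho_f \in L^\infty_{\rm loc}(L^\infty)$, I would mimic Loeper's Wasserstein-stability argument. Given two solutions $f_1,f_2$ with bounded densities, one couples them by the optimal plan and differentiates $W_2^2(f_1(t),f_2(t))$ in time. The resulting derivative reduces to estimating $\|E_1 - E_2\|_{L^2}$ in terms of $W_2(f_1,f_2)$. For the $\bar E$ piece one invokes Loeper's inequality directly; for the $\widehat E$ piece one uses the quantitative stability estimates of Proposition \ref{prop:Ustab} together with the enhanced regularity of $\widehat U$. The outcome is a log-Lipschitz-type Gronwall inequality of the form $\tfrac{d}{dt} W_2^2 \lesssim W_2^2 \, |\log W_2^2|$, and Osgood's lemma closes the argument.

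For \textbf{existence} of a solution with $\rho_f \in L^\infty_{\rm loc}(L^\infty)$, I would regularize $f_0$ (e.g.\ by mollification and velocity truncation) so that a smooth, bounded-density solution $f^\e$ to \eqref{vpme} exists on a short time interval, and then propagate this \emph{a priori} to obtain uniform estimates. The heart of the matter, handled in Sections \ref{sec:moments}--\ref{sec:construction}, is a Lions--Perthame / Pfaffelmoser-type propagation of the velocity moments $M_m(t) := \int |v|^m f^\e(t)\,dx\,dv$ for some $m > d(d-1)$. Once $M_{m_0}$ is bounded uniformly in $\e$ and $t$, one converts this into a uniform bound on $\|\rho_{f^\e}\|_{L^\infty}$ via the standard interpolation $\|\rho\|_{L^\infty} \lesssim \|f\|_{L^\infty}^{m/(m+d)} M_m^{d/(m+d)}$, using the pointwise decay hypothesis $f_0 \lesssim (1+|v|^{k_0})^{-1}$ (which propagates along characteristics because $\|E\|_\infty$ is controlled). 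The weak solution is then obtained by compactness as $\e \to 0$, and uniqueness (established above) makes the limit unambiguous.

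The \textbf{main obstacle} is the moment-propagation step: the Lions--Perthame machinery requires sharp bounds on the electric field along characteristics, and it is here that the nonlinear term $e^U$ could destroy the classical estimates. The cure is precisely the splitting above: the extra $\widehat E$ contribution has \emph{better} regularity than $\bar E$ (Proposition \ref{prop:regU}), so it is harmless in every step of the VP proof, and the standard VP machinery can be pushed through with only cosmetic modifications. Finally, \textbf{propagation of compact support} follows once $\|E(t)\|_{L^\infty}$ is bounded uniformly on compact time intervals (a byproduct of the $L^\infty$ density bound and the regularity of $\bar E, \widehat E$): the characteristic ODE $\dot V = E(t, X(t))$ then keeps $|V(t)|$ bounded by $|V(0)| + t \sup_{s \le t}\|E(s)\|_\infty$, so if $f_0$ is compactly supported in $v$ so is $f(t)$, uniformly on bounded time intervals.
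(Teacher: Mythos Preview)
Your proposal captures the paper's architecture correctly: the splitting $E = \bar E + \widehat E$, Loeper's $W_2$-stability for uniqueness, moment propagation plus compactness for existence, and the use of Proposition~\ref{prop:regU} to control $\widehat E$ are exactly what the paper does. However, the passage from the moment bound to $\rho_f \in L^\infty$ is garbled, and as written it does not close. The interpolation you quote, $\|\rho\|_{L^\infty} \lesssim \|f\|_{L^\infty}^{m/(m+d)} M_m^{d/(m+d)}$, is false: the standard interpolation (Lemma~\ref{lem:mom-interpolation} in the paper) only yields $\rho \in L^{(m+d)/d}$, never $L^\infty$. A moment bound of any finite order cannot by itself produce $\rho \in L^\infty$.

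The correct chain, carried out in the paper's final subsection, is: (i) $M_{m_0} < \infty$ with $m_0 > d(d-1)$ gives $\rho_f \in L^p$ for some $p > d$ via interpolation; (ii) elliptic regularity then yields $\bar E \in C^{0,\gamma} \subset L^\infty$, while Proposition~\ref{prop:regU} gives $\widehat E \in L^\infty$, so $E \in L^\infty$; (iii) \emph{only now} can the pointwise decay $f_0(x,v) \leq C_0(1+|v|^{k_0})^{-1}$ be propagated along characteristics, producing $f(t,x,v) \leq C_0\bigl(1 + (|v| - C_T t)_+^{k_0}\bigr)^{-1}$; (iv) integrating this in $v$ (using $k_0 > d$) finally gives $\rho_f(t,\cdot) \in L^\infty$. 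This ordering explains exactly why the two thresholds $m_0 > d(d-1)$ and $k_0 > d$ both appear in the hypotheses, and it resolves the apparent circularity in your sketch (you need $\|E\|_\infty$ to propagate the decay, but you seemed to be using the decay to get $\|E\|_\infty$).
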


\begin{remark}
The VPME system has an associated {\em energy} functional, which is formally a conserved quantity. It is defined by
\be \label{def:Ee}
\mc{E} [f ] := \frac{1}{2}\int_{\bt^d \times \br^d} |v|^2 f  \di x \di v + \frac{1 }{2} \int_{\bt^d} |\nabla U |^2 \di x +  \int_{\bt^d} U  e^{U } \di x ,
\ee
where $U $ is the solution of the nonlinear Poisson equation in \eqref{vpme}.
The weak solutions provided by Theorem~\ref{thm:main} conserve this energy: for all $t \geq 0$,
\be
\mc{E}[f(t)] = \mc{E}[f_0] .
\ee
\end{remark}

\begin{remark}
\label{rem:C1}
Notice that this theorem stipulates no regularity on the initial datum $f_0$, only that $f_0 \in L^1 \cap L^\infty(\bt^d \times \br^d)$. The resulting solutions thus will not in general be $C^1$ classical solutions. However, as we shall discuss later, since $\rho_f \in L^\infty_{\text{loc}}([0,+\infty) ; L^\infty(\TT^d) )$ they have a well-defined characteristic flow. Moreover the solution may be represented as the pushforward of the initial datum $f_0$ along this flow.

If the initial datum is additionally assumed to be $C^1$, 
then this characteristic flow can be used to show that the solution $f$ provided by Theorem~\ref{thm:main} is in fact $C^1$. Theorem~\ref{thm:main} thus also provides the global existence of $C^1$ classical solutions for $C^1$ initial data.
\end{remark}

\subsection{Strategy}

\subsubsection{Analysis of the Electric Field}

An important step of the proof is a toolbox of estimates on the electric field $E$, which we set out and prove in Section~\ref{sec:electric}. Following \cite{IHK1}, our starting observation is that we can consider $E$ as a sum of the electric field that appears in the electron model \eqref{vp}, plus a more regular nonlinear term.
That is, we decompose $E $ into the form $E = \bar E +\widehat E $, where
$$
\bar E =-\nabla \bar U ,\qquad \widehat E =-\nabla \widehat U ,
$$
and $\bar U $ and $\widehat U $ solve respectively
\be \label{electric-field-strategy}
 \Delta \bar U =1-\rho_f  ,\qquad  \Delta \widehat U =e^{\bar U +\widehat U } - 1.
\ee

In general, we would expect $\widehat E$ to be more regular than $\bar E.$ 
In Section~\ref{sec:electric} we prove that this intuition is rigorously true using techniques from calculus of variations to deal with the nonlinearity in the equation for $\widehat U$. 

In the context of analysing the VPME system \eqref{vpme}, what is crucial is to quantify this gain of regularity carefully, in particular in terms of its dependence on $\rho_f$. Specifically, we prove that if $\rho_f \in L^{(d+2)/d}(\bt^d)$, then $\widehat U \in C^{2,\alpha}(\bt^d)$ for some $\alpha > 0$, with a quantitative upper bound on the $C^{2,\alpha}$ norm that depends only on $\| \rho_f \|_{L^{(d+2)/d}}$.
The choice of $(d+2)/d$ as the integrability exponent is relevant because
this is a quantity that we expect to be bounded uniformly in time, as a consequence of the conservation of the energy functional $\mc{E}$ defined in \eqref{def:Ee} (see Subsection~\ref{sec:rho-interpolation}). 

These quantitative estimates explain in part the influence of the dimension $d$: the goal is to get estimates depending only on $L^p$ norms of $\rho_f$ up to order $p = (d+2)/d$. This exponent decreases as $d$ increases, while at the same time the gain of regularity provided by the ellipticity of the equation for $\widehat U$ also becomes weaker with increasing dimension. 

The proof of well-posedness is then divided into two auxiliary results. One is the uniqueness of solutions for VPME under the condition that the mass density $\rho_f$ is bounded in $L^\infty(\TT^d)$. The other is the global existence of solutions satisfying this condition under the assumptions of Theorem~\ref{thm:main}.

\subsubsection{Uniqueness}

In Section~\ref{sec:stab} we prove that uniqueness holds for the VPME system \eqref{vpme} under the assumption that the mass density $\rho_f$ is bounded in the sense of $L^\infty(\TT^d)$, locally uniformly in time. The same property has been known for the electron Vlasov-Poisson system \eqref{vp} ever since the work of Loeper \cite{Loep}. Our proof in the VPME setting makes use of Loeper's strategy to handle the electric field $\bar E$. However, the extension of this strategy to the VPME case requires nontrivial additional estimates for $\widehat E$ regarding its stability with respect to the inducing charge density $\rho_f$. These estimates are proved in Section~\ref{sec:electric}.

\subsubsection{Existence of Solutions}

In Sections \ref{sec:moments} and \ref{sec:construction}, we show that global-in-time solutions of the VPME system \eqref{vpme} exist for any initial datum $f_0 \in L^1 \cap L^\infty(\TT^d \times \RR^d)$ with a finite velocity moment of order $m_0 > d$ -- this is a wider range of data than that considered in Theorem~\ref{thm:main}, but there stronger assumptions are required for uniqueness.

The strategy of proof is based on showing that the VPME system propagates moments of sufficiently high order. This approach was used to prove well-posedness for the electron Vlasov-Poisson system, initially by Lions and Perthame \cite{Lions-Perthame} in the whole space case where $x \in \RR^3$. Pallard \cite{Pallard} (see also the previous contribution by Caglioti and Marchioro \cite{Caglioti-Marchioro}) then extended the range of moments that could be propagated in the whole space case and showed propagation of moments on the torus $\TT^3$. Chen and Chen \cite{Chen-Chen} adapted these techniques to push even further the range of moments that could be propagated.

In Section~\ref{sec:moments} we extend the proof in \cite{Chen-Chen} to the VPME case, proving an a priori estimate on the velocity moments of solutions of \eqref{vpme}.

Then, in Section~\ref{sec:construction}, we use this a priori estimate to prove global existence of solutions for the VPME system \eqref{vpme}. For this we first consider a regularised version of the VPME system. For the regularised system, solutions can be shown to exist using, for example, an adaptation of the methods of Dobrushin \cite{Dob}. Proving uniform moment estimates with respect to the regularization parameter, we then extract a limit point which we show is a solution of the original VPME system \eqref{vpme}. We explain the construction in detail in order to emphasise that no regularity is required on the initial datum $f_0$, and that the solutions so constructed are energy conserving.

\subsubsection{Remarks on Notation}

From now on we will use $\rho_{f}$ to denote the density generated by $f$.
Throughout the paper, we use the notation $C$ to denote an arbitrary positive constant, which may change from line to line. Subscripts are used to denote parameters upon which $C$ depends, for example $C_T$ denotes a constant depending in some way on another parameter $T$.

We identify the $d$-dimensional torus $\TT^d$ with the cube $\mc{Q}_d : = \left [- \frac{1}{2}, \frac{1}{2} \right ]^{d}$ or its translations, with appropriate identifications of the boundary. The distance between points on the torus is given by
\be \label{def:torus-metric}
d(x,y) = \inf_{\bb{Z}^d} |x - y  + k|.
\ee
With an abuse of notation, we will denote this distance by $|x-y|$ for points on the torus. Note that for all $x,y \in \TT^d$,
\be
|x-y| \leq \frac{1}{2} \sqrt{d} .
\ee

\section{Properties of the electric field} \label{sec:electric}

\subsection{Decomposition}

As explained above, we will split the electric field into a singular part, which behaves like the electric field in the Vlasov-Poisson system, and a more regular term. To be precise, we write $E $ in the form $\bar E +\widehat E $ where
$$
\bar E =-\nabla \bar U ,\qquad \widehat E =-\nabla \widehat U ,
$$
and $\bar U $ and $\widehat U $ solve respectively
\be \label{electric-field}
 \Delta \bar U =1-\rho  ,\qquad  \Delta \widehat U =e^{\bar U +\widehat U } - 1.
\ee
We will assume for convenience, and without loss of generality, that $\bar U $ has zero mean over the torus:
$$
\int_{\bt^d} \bar U  \di x = 0 .
$$
Notice that in this way $U :=\bar U +\widehat U $ solves
$$
 \Delta U =e^{U }-\rho .
$$
The remainder of this section is devoted to the study of the equations \eqref{electric-field}. We consider the existence and regularity of solutions as well as their stability with respect to the density $\rho $. We work under the assumption $\rho  \in L^\infty(\bt^d)$ (hence also in $L^{\frac{d+2}{d}}(\bt^d)$), since later we will work with solutions of \eqref{vpme} that have this degree of integrability.

\subsection{Regularity estimates on \texorpdfstring{$\bar U $ and $\widehat U $}{the potential}}

In this section we prove some a priori regularity estimates on the singular and regular parts of the potential $U  = \bar U  + \widehat U $. 
Our aim is to prove the following proposition.
\begin{prop}[Regularity estimates on $\bar U $ and $\widehat U $] \label{prop:regU} Let $d = 2,3$. Let $h \in L^\infty(\bt^d)$. Then there exist unique $\bar U   \in W^{1,2}(\bt^d)$ with zero mean and $\widehat U  \in W^{1,2}(\bt^d)$ satisfying
$$
 \Delta \bar U =1-h ,\qquad  \Delta \widehat U =e^{\bar U +\widehat U } - 1.
$$
Moreover we have the following estimates: for some constant $C_{\alpha,d}>0$,
\begin{equation}
\begin{array}{ll}
\lVert  \bar U  \rVert_{C^{0,\alpha}(\bt^d)}  \leq C_{\alpha, d} \,   \left (1 +  \lVert h \rVert_{L^{\frac{d+2}{d}}(\bt^d)} \right ), &\qquad
 \alpha \in \begin{cases} (0,1) \text{ if } d=2 \\ (0, \frac{1}{5}]  \text{ if } d=3 , \end{cases}  \\
\lVert \bar U  \rVert_{C^{1, \alpha}(\bt^d)}  \leq C_{\alpha, d} \,   \left (1 +  \lVert h \rVert_{L^{\infty}(\bt^d)} \right ),  &\qquad \alpha \in (0,1) ,\\
\|\widehat U  \|_{C^{1,\alpha}(\bt^d)}  \leq C_{\alpha,d}\, \exp{\Bigl(C_{\alpha,d} \,    \Bigl(1 +  \lVert h \rVert_{L^{\frac{d+2}{d}}(\bt^d)} \Bigr) \Bigr)}, &\qquad \alpha \in (0,1),\\
\|\widehat U  \|_{C^{2,\alpha}(\bt^d)}  \leq C_{\alpha,d}\,\exp\,\exp  {\Bigl(C_{\alpha,d}\,    \Bigl(1 +  \lVert h \rVert_{L^{\frac{d+2}{d}}(\bt^d)} \Bigr) \Bigr)}, & \qquad\alpha \in \begin{cases} (0,1) \text{ if } d=2 \\ (0, \frac{1}{5}]  \text{ if } d=3. \end{cases} 
\end{array}
\end{equation}

\end{prop}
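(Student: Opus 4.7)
The plan is to handle the linear equation for $\bar U$ by standard elliptic regularity and to treat the nonlinear equation for $\widehat U$ variationally, followed by a two-step bootstrap to reach $C^{2,\alpha}$.

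First I would apply the Calder\'on--Zygmund estimate on $\TT^d$ to $\Delta \bar U = 1 - h$ (with zero mean), obtaining $\|\bar U\|_{W^{2,p}(\TT^d)} \leq C_p\bigl(1 + \|h\|_{L^p(\TT^d)}\bigr)$ for every $p \in (1,\infty)$. Morrey's embedding then yields the two estimates for $\bar U$ directly: with $p = (d+2)/d$ one has $W^{2,p} \hookrightarrow C^{0,\alpha}$ for $\alpha \leq 2 - d/p = 4/(d+2)$, which in $d=3$ forces $\alpha \leq 1/5$ while in $d=2$ any $\alpha \in (0,1)$ is admissible; taking $p$ arbitrarily large when $h \in L^\infty$ produces the $C^{1,\alpha}$ estimate for every $\alpha \in (0,1)$.

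For $\widehat U$, existence and uniqueness would come from the direct method applied to the strictly convex functional
\[
\mathcal{I}(v) := \frac{1}{2}\int_{\TT^d} |\nabla v|^2 \, \di x + \int_{\TT^d} e^{\bar U + v} \, \di x - \int_{\TT^d} v \, \di x, \qquad v \in W^{1,2}(\TT^d),
\]
whose Euler--Lagrange equation is exactly $\Delta v = e^{\bar U + v} - 1$. Coercivity is the delicate point: the gradient term controls the zero-mean part of $v$ by Poincar\'e--Wirtinger, while Jensen's inequality $\int e^{\bar U + v} \geq |\TT^d|\exp\bigl(\bar v - \|\bar U\|_{L^\infty}\bigr)$ lets the exponential term dominate the linear term $-\int v$ as $|\bar v| \to \infty$. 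Strict convexity yields a unique minimizer, which is $\widehat U$.

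To upgrade $\widehat U$ from $W^{1,2}$ to $C^{2,\alpha}$ I would bootstrap twice. A maximum principle argument --- applied to a mollification of $\widehat U$, or equivalently through Stampacchia truncations $(\widehat U - M)_+$ --- gives $\|\widehat U\|_{L^\infty} \leq \|\bar U\|_{L^\infty}$: at a maximum of $\widehat U$ one has $\Delta \widehat U \leq 0$, so $e^{\bar U + \widehat U} \leq 1$, i.e.\ $\widehat U \leq -\bar U$, and symmetrically from below. Together with the first step this yields $\|e^{\bar U + \widehat U} - 1\|_{L^\infty} \leq \exp\bigl(C(1 + \|h\|_{L^{(d+2)/d}})\bigr)$, and Calder\'on--Zygmund plus Morrey applied to $\Delta \widehat U = e^{\bar U + \widehat U} - 1$ give the $C^{1,\alpha}$ bound. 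A further iteration, using the elementary inequality $[e^g]_{C^{0,\alpha}} \leq e^{\|g\|_{L^\infty}}\,[g]_{C^{0,\alpha}}$ with $g = \bar U + \widehat U$ together with Schauder estimates, closes the argument and delivers the $C^{2,\alpha}$ bound, where the $\alpha \leq 1/5$ restriction in $d=3$ is inherited from the H\"older bound on $\bar U$.

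The main obstacle I expect is maintaining the sharp dependence on $\|h\|_{L^{(d+2)/d}}$ (rather than $\|h\|_{L^\infty}$) throughout the first, third, and fourth estimates: this is essential because in the Vlasov application $\rho_f$ will only be controlled uniformly in time in $L^{(d+2)/d}$, via the conserved energy. The crux is the maximum principle step, which converts the Sobolev-controlled H\"older bound on $\bar U$ into an $L^\infty$ bound on $\widehat U$ at no additional cost, after which linear theory closes the bootstrap.
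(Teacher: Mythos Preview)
Your proposal is correct and in fact slightly sharper than the paper's argument. The treatment of $\bar U$ via Calder\'on--Zygmund and Morrey is the same, and the variational construction of $\widehat U$ is also the same functional, though the paper establishes coercivity by the pointwise inequality $e^{-M_1}e^s - s \geq s^2 - C$ rather than Jensen on the mean.

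The genuine difference is the $L^\infty$ bound on $\widehat U$. The paper never invokes the maximum principle; instead it tests the equation against the truncated exponentials $e^{n\widehat U_k}$ with $\widehat U_k = \widehat U \wedge k$ and iterates in $n$ to obtain $\|e^{\widehat U}\|_{L^n} \leq e^{M_1}$ for every $n$, then feeds this into Calder\'on--Zygmund to reach $C^{1,\alpha}$. Your Stampacchia truncation (which does require the additional cutoff $\min\bigl((\widehat U-M)_+,k\bigr)$ so that the test function lies in $L^\infty\cap W^{1,2}$) gives directly $\|\widehat U\|_{L^\infty}\leq \|\bar U\|_{L^\infty}$, which is both more elementary and strictly sharper. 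Tracing this improvement through the Schauder step, your argument actually yields $\|\widehat U\|_{C^{2,\alpha}} \leq \exp\bigl(C(1+\|h\|_{L^{(d+2)/d}})\bigr)$ with a \emph{single} exponential, whereas the paper's iteration produces only $\|\widehat U\|_{L^\infty}\leq C(1+e^{2M_1})$ at the $C^{1,\alpha}$ stage, and this extra exponential reappears in the exponent after Schauder, giving the double exponential stated in the proposition. Either bound suffices for the downstream applications in the paper.
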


\noindent The existence of a unique solution $\bar U  \in W^{1,2}(\bt^d)$ for $h \in L^2(\bt^d) \supset L^{\infty}(\bt^d)$ is well-known - see for example \cite[Chapter 6]{Evans}. In the following lemma, we recall some standard elliptic regularity estimates for this solution, that follow from Calder\'{o}n-Zygmund estimates for the Laplacian \cite[Section 9.4]{GT}, and Sobolev inequalities.

\begin{lem} \label{Ubar-reg}
Let $\bar U  \in \bar H$ satisfy
$$
  \Delta \bar U  = h .
$$
\begin{enumerate}[(i)]
\item If $h \in L^{\frac{d+2}{d}}(\bt^d)$, then for all $\alpha \in (0,1)$ (if $d=2$) or $\alpha \in (0, \frac{1}{5}]$ (if $d=3$) there exists a constant $C_{\alpha,d}>0$ such that
$$
\lVert  \bar U  \rVert_{C^{0,\alpha}(\bt^d)} \leq C_{\alpha, d}\, \left (1 +  \lVert h \rVert_{L^{\frac{d+2}{d}}(\bt^d)} \right ) .
$$
\item If $h \in L^{\infty}(\bt^d)$, then for any $\alpha \in (0,1)$, there exists a constant $C_{\alpha, d}$ such that
$$
\lVert \bar U  \rVert_{C^{1, \alpha}(\bt^d)} \leq C_{\alpha, d}\, \lVert h \rVert_{L^{\infty}(\bt^d)} .
$$
\end{enumerate}
\end{lem}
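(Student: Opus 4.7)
My plan is to reduce both bounds to a single elliptic-regularity calculation: the Calder\'on-Zygmund estimate for the Laplacian on the torus, followed by the Morrey-Sobolev embedding, with the integrability exponent $p$ tuned differently in each part. The only PDE input I need is that if $\bar U$ is the zero-mean solution of $\Delta \bar U = h$ on $\bt^d$ (which forces $\int h \,dx = 0$), then for every $p \in (1, \infty)$,
\[
\|\bar U\|_{W^{2,p}(\bt^d)} \leq C_p \|h\|_{L^p(\bt^d)},
\]
as in \cite[Section 9.4]{GT}. From there, everything is bookkeeping with Sobolev embeddings, so the whole proof is quite short.

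For part (ii), since $h \in L^\infty(\bt^d) \subset L^p(\bt^d)$ for every finite $p$, the Calder\'on-Zygmund bound yields $\|\bar U\|_{W^{2,p}} \leq C_p \|h\|_{L^\infty}$ for any $p$ I like. Combined with the Morrey embedding $W^{2,p}(\bt^d) \hookrightarrow C^{1,1-d/p}(\bt^d)$ valid for $p > d$, this delivers a bound in $C^{1,\alpha}(\bt^d)$ for any prescribed $\alpha \in (0,1)$: given such an $\alpha$, I pick any $p > d/(1-\alpha)$ and absorb the $p$-dependence of the constant into $C_{\alpha,d}$.

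For part (i) I take $p = (d+2)/d$ exactly, so the hypothesis $h \in L^{(d+2)/d}$ gives $\|\bar U\|_{W^{2,p}} \leq C \|h\|_{L^{(d+2)/d}}$ by Calder\'on-Zygmund. I then apply the Morrey-Sobolev embedding $W^{2,p}(\bt^d) \hookrightarrow C^{0,\alpha}(\bt^d)$, whose critical H\"older exponent is $2 - d/p = 2 - d^2/(d+2)$. A direct arithmetic check gives the critical value $1$ when $d=2$ (so every $\alpha < 1$ works) and $1/5$ when $d=3$ (matching the stated range $\alpha \in (0, 1/5]$). The additive $1$ on the right-hand side simply compensates for subtracting the mean of $h$ to enforce the compatibility condition $\int h = 0$, which is harmless since $\bt^d$ has unit measure and so this mean is controlled by $\|h\|_{L^{(d+2)/d}}$.

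I expect no serious obstacle here: the lemma is essentially the composition of two textbook facts. The only mildly delicate point is the endpoint Hölder exponent $\alpha = 1/5$ in dimension three, which sits at the borderline of the Morrey embedding and requires the sharp (non-strict) form of that embedding at the subcritical exponent $p = 5/3 < \infty$; this is standard but worth flagging when the estimate is used later at its critical exponent in Proposition~\ref{prop:regU}.
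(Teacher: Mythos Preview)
Your proposal is correct and follows exactly the approach the paper itself indicates: the paper does not give a detailed proof of this lemma but simply states that the estimates ``follow from Calder\'on--Zygmund estimates for the Laplacian \cite[Section 9.4]{GT}, and Sobolev inequalities,'' which is precisely the Calder\'on--Zygmund plus Morrey--Sobolev chain you spell out. Your arithmetic on the critical H\"older exponents ($\alpha<1$ for $d=2$, $\alpha\le 1/5$ for $d=3$) is correct, and the endpoint $\alpha=1/5$ is indeed attained because Morrey's inequality $W^{1,p}\hookrightarrow C^{0,1-d/p}$ holds with the closed endpoint for $p>d$.
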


In order to prove estimates on the VPME equation \eqref{vpme}, we would ideally like to have good control of the regularity of the electric field. Unfortunately the estimates in Lemma~\ref{Ubar-reg} are not strong enough to provide Lipschitz regularity for $\nabla \bar U $ as we would like. However, a log-Lipschitz estimate is available. This well-known result is proved for instance in \cite[Lemma 8.1]{BM} for the case where the spatial domain is $\br^2$. For completeness we briefly recall the proof below for general $d$.

\begin{lem}[Log-Lipschitz regularity of $\bar E $] \label{lem:logLip}
Let $\bar U $ be a solution of 
$$
 \Delta \bar U = h
$$
for $h \in L^{\infty}(\bt^d)$. Then $\bar E : = - \nabla \bar U$ satisfies
\be \label{est:logLip}
| \bar E  (x) -  \bar E (y)| \leq    C_d \lVert h \rVert_{L^{\infty}} |x-y|  \left ( 1 + \log{\left ( \frac{\sqrt{d}}{2|x-y|} \right )} \right ) .
\ee
\end{lem}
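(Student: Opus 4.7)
The plan is to exploit the explicit representation $\bar E(x) = \int_{\bt^d} K(x-z)\, g(z)\,\di z$, where $g := -h$ (adjusted by a constant if necessary so that $\int g=0$ to ensure solvability) and $K = -\nabla G$ is the Coulomb kernel on the torus defined through \eqref{def:G}. Near the singularity the kernel behaves like $|x|^{-(d-1)}$ and $|\nabla K(x)| \lesssim |x|^{-d}$, while far from $0$ both $K$ and $\nabla K$ are smooth bounded functions because the torus is compact. All the unpleasantness is local.

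Fix $x,y\in\bt^d$ and set $r:=|x-y|$. Without loss of generality $r\leq \tfrac14\sqrt d$ (otherwise the inequality is trivial by the trivial bound $|\bar E|\leq C\|h\|_{L^\infty}$, which itself follows from Lemma~\ref{Ubar-reg} with $\alpha=1/2$ on the torus). I will split
\begin{equation}
\bar E(x)-\bar E(y)=\int_{A}\bigl[K(x-z)-K(y-z)\bigr]\,g(z)\,\di z+\int_{\bt^d\setminus A}\bigl[K(x-z)-K(y-z)\bigr]\,g(z)\,\di z,
\end{equation}
where the near set is $A:=\{z\in\bt^d:\, |x-z|<2r\ \text{or}\ |y-z|<2r\}$. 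On $A$ I bound the two terms separately using $|K(w)|\leq C_d/|w|^{d-1}$ (valid for $|w|\leq \sqrt d/2$) and the change of variables $\int_{B_{2r}(x)}|x-z|^{-(d-1)}\,\di z = C_d\, r$, and similarly around $y$. This gives a contribution of order $C_d\|h\|_{L^\infty}r$.

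On the far set $\bt^d\setminus A$ one has $|x-z|\geq 2r$ and $|y-z|\geq r$; along the segment joining $x$ and $y$ the distance to $z$ stays comparable to $|x-z|$, so the mean value theorem yields $|K(x-z)-K(y-z)|\leq C_d\,|x-y|\,|x-z|^{-d}$. Integrating in polar coordinates on the torus,
\begin{equation}
\int_{\bt^d\setminus A}|x-z|^{-d}\,\di z \leq C_d\int_{2r}^{\sqrt d/2}\rho^{-1}\,\di \rho = C_d\,\log\!\Bigl(\tfrac{\sqrt d}{4r}\Bigr),
\end{equation}
which contributes $C_d\|h\|_{L^\infty}|x-y|\log(\sqrt d/(2|x-y|))$. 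Summing the two contributions and absorbing the linear term into the $1+\log$ factor gives \eqref{est:logLip}.

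The only real subtlety, and therefore the main point to check carefully, is the transition between $K$'s local singular behaviour and its global smooth part: the kernel on the torus is $K_{\mathrm{sing}}+K_{\mathrm{reg}}$ where $K_{\mathrm{sing}}$ is the Euclidean Coulomb kernel truncated near $0$ and $K_{\mathrm{reg}}\in C^\infty(\bt^d)$. The regular piece trivially produces a Lipschitz contribution bounded by $C_d\|h\|_{L^\infty}|x-y|$, which is again absorbed into $|x-y|(1+\log(\sqrt d/(2|x-y|)))$. With this decomposition the polar-coordinate estimates above are rigorous despite the periodic geometry, and the stated constant depends only on $d$.
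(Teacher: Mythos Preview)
Your proposal is correct and follows essentially the same approach as the paper's proof: represent $\bar E$ via the Coulomb kernel, split the integral into a near region (of radius $\sim 2|x-y|$) and a far region, bound the near part by integrating the $|w|^{-(d-1)}$ singularity directly, and bound the far part via the mean value theorem and a radial integral producing the logarithm, handling the smooth remainder of the periodic kernel as a Lipschitz correction. The only cosmetic differences are that the paper centers its near set on $x$ alone (using $|y-z|\leq 3|x-y|$ there) and is slightly more careful about the smallness threshold on $|x-y|$ to ensure the kernel decomposition from Lemma~\ref{lem:G} applies on the relevant ball.
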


To prove this lemma, we use the representation of $\bar E$ in terms of the Coulomb kernel $K = - \nabla G$, where $G$ satisfies \eqref{def:G}. We will need some information about the regularity properties of $K$. The following result shows that, near the singularity, the Coulomb kernel on the torus is comparable to the Coulomb kernel for the whole space. For a proof, see \cite{Titch} or \cite[Lemma 2.1]{IGP}.  

\begin{lem} \label{lem:G}
Let $G$ denote the Green's function for the negative Laplacian on the torus:
\be \label{def:G-2}
- \Delta G = \delta_0 - 1.
\ee
Then $G$ is a smooth function other than at zero: $G \in C^\infty(\TT^d \setminus \{0\})$. Moreover, on the ball $B_{1/4}(0)$ of radius $1/4$ and centred at zero, $G$ can be decomposed into the following form:
\be \label{def:G0}
G(x) = \begin{cases}
- \frac{1}{2 \pi} \log{|x|} + G_0(x) & d=2 \\
 \frac{1}{ |\bb{S}_{d-1}| |x|^{d-2}} + G_0(x) & d\ge3 ,
\end{cases}
\ee
where $G_0 \in C^\infty(\overline{B_{1/4}(0)})$ is a smooth function, and $|\bb{S}_{d-1}|$ denotes the surface area of the unit sphere in dimension $d$.
\end{lem}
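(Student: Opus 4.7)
The plan is to construct $G$ via its Fourier series, identify the singular part by comparison with the whole-space fundamental solution, and then invoke elliptic regularity to show that the remainder is smooth. The hard part is genuinely only bookkeeping: isolating the singularity and showing the correction satisfies a smooth Poisson equation including at the origin.

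First I would establish existence and the global smoothness statement $G \in C^\infty(\TT^d \setminus \{0\})$. Since $\int_{\TT^d}(\delta_0 - 1)\di x = 0$ the equation $-\Delta G = \delta_0 - 1$ is solvable on the torus; one can either write
\[
G(x) = \sum_{k \in \bz^d \setminus \{0\}} \frac{1}{(2\pi|k|)^2} e^{2\pi i k \cdot x}
\]
(fixing the mean to zero) or quote the standard existence result for the Laplacian on the torus with mean-zero data. On the open set $\TT^d \setminus \{0\}$ the distribution $\delta_0 - 1$ restricts to the smooth function $-1$, so by interior elliptic regularity for the Laplacian $G \in C^\infty(\TT^d \setminus \{0\})$.

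Next I would identify the singularity. Let $\Phi$ denote the standard fundamental solution of $-\Delta$ on $\RR^d$, namely
\[
\Phi(x) = \begin{cases} -\frac{1}{2\pi}\log|x| & d=2,\\ \frac{1}{|\bb{S}_{d-1}||x|^{d-2}} & d \geq 3,\end{cases}
\]
so that $-\Delta \Phi = \delta_0$ in $\mc{D}'(\RR^d)$ (the normalisation in $d=3$ uses $(d-2)=1$). Define on $B_{1/4}(0) \subset \TT^d$ (identifying a neighbourhood of the origin with a Euclidean ball) the function
\[
G_0(x) := G(x) - \Phi(x).
\]
Computing distributionally on $B_{1/4}(0)$,
\[
-\Delta G_0 = -\Delta G - (-\Delta \Phi) = (\delta_0 - 1) - \delta_0 = -1,
\]
so $G_0$ solves $\Delta G_0 = 1$ in the distributional sense on the whole ball, including at the origin (the two Dirac masses cancel exactly by construction). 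The right-hand side is smooth, so by interior elliptic regularity (or Weyl's lemma applied after subtracting a polynomial particular solution such as $|x|^2/(2d)$), $G_0 \in C^\infty(B_{1/4}(0))$, and in particular $G_0 \in C^\infty(\overline{B_{1/4}(0)})$ after shrinking slightly if necessary, which gives the claimed decomposition \eqref{def:G0}.

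The only subtlety worth double-checking is the distributional identity $-\Delta \Phi = \delta_0$ with the normalisation used in the lemma; this is a direct computation from the divergence theorem applied on $B_\e(0) \setminus \{0\}$ and the explicit formula for $\Phi$. Once this is in hand, everything else reduces to standard elliptic regularity, and no growth/decay estimate on $G_0$ is needed since the domain $B_{1/4}(0)$ is bounded.
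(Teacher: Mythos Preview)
Your argument is correct and is the standard way to prove this result. Note, however, that the paper does not actually prove this lemma: immediately before the statement it writes ``For a proof, see \cite{Titch} or \cite[Lemma 2.1]{IGP}'' and then moves on to use the lemma in the proof of Lemma~\ref{lem:logLip}. So there is no in-paper proof to compare against; your proposal supplies exactly the kind of argument those references would contain.

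One small fix: your last step reads ``$G_0 \in C^\infty(\overline{B_{1/4}(0)})$ after shrinking slightly if necessary,'' which is backwards. Interior elliptic regularity on $B_{1/4}(0)$ only gives smoothness on the \emph{open} ball. To get smoothness up to and including the closed ball $\overline{B_{1/4}(0)}$, carry out the same subtraction on a slightly larger ball, say $B_{1/3}(0)$ (still well inside a fundamental domain of $\TT^d$), obtain $G_0 \in C^\infty(B_{1/3}(0))$, and then restrict. This is purely cosmetic and does not affect the substance of the proof.
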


\begin{proof}[Proof of Lemma~\ref{lem:logLip}]
We use the representation of $\bar E$ using the Coulomb kernel: observe that
\be
\bar E = K \ast h
\ee
where $K = - \nabla G$ is the Coulomb kernel on the torus, with $G$ defined by \eqref{def:G-2}. 
Thus
\be \label{barE-convolution}
|\bar E(x) - \bar E(y)| = \left | \int_{\bt^d} \left [ K(x-z) - K(y-z) \right ] h(z) \di z \right | .
\ee

By Lemma~\ref{lem:G}, on the ball $B_{1/4}(0)$ the kernel $K$ has the representation
\be \label{K:decomp}
K(x) = - C_d \frac{x}{|x|^d} + K_0(x)
\ee
for some $K_0 \in C^1(\bt^d)$. In particular note that $K \in L^1(\TT^d)$ and so
\be
|\bar E(x) - \bar E(y)| \leq 2 \| K \|_{L^1(\TT^d)} \| h \|_{L^\infty(\TT^d)}.
\ee
Therefore, it suffices to prove the estimate \eqref{est:logLip} for small values of $|x-y|$.

We evaluate the integral \eqref{barE-convolution} by identifying the torus $\TT^d$ with the cube $x + \mc{Q}_d$. We then divide the cube into a region close to the singularity of $K(x-z)$ and region far from the singularity. Define the regions
\begin{align}
A_1  = \{ z \in x + \mc{Q}_d: |x-z| \leq 2 |x-y| \},\qquad
A_2  = \{ z \in x + \mc{Q}_d : 2 |x-y| \leq |x-z| \} .
\end{align}
Then let
\be
I_i : = \int_{z \in A_i}  \left [ K(x-z) -  K(y-z)  \right ] h(z) \di z .
\ee 

We now assume that $|x-y| \leq \frac{1}{12}$. This is chosen so that for all $z \in A_1$,
\be
|x-z| \leq 2|x-y| \leq \frac{1}{4}, \quad |y-z| \leq 3 |x-y| \leq \frac{1}{4}.
\ee
Then $I_1$ can be bounded in the following way:
\be
I_1 \leq C_d \left \lvert \int_{z \in A_1}  \left [ \frac{x-z}{|x-z|^d} -  \frac{y-z}{|y-z|^d}  \right ] h(z) \di z \right \rvert + \int_{z \in A_1} |K_0(x-z) - K_0(y-z)|  |h(z)|\di z .
\ee
For the second term, we note that, for all $t \in [0,1]$,
\be
|x-z + t(y-x)| \leq (2+t)|x-y| \leq \frac{1}{4},
\ee
that is, the line segment $(1-t)x + ty - z$ is contained in the ball $B_{1/4}(0)$, on which $K_0$ is a $C^1$ function. Thus
\be
|K_0(x-z) - K_0(y-z)| \leq \| \nabla K_0\|_{L^\infty(B_{1/4}(0))} |x-y| .
\ee
The second term is therefore bounded by
\be
\int_{z \in A_1} |K_0(x-z) - K_0(y-z)|  |h(z)|\di z \leq C_K \| h \|_{L^\infty(\TT^d)} |x-y| .
\ee
We bound the first term by integrating over the singularity:
\begin{align}
\left \lvert \int_{z \in A_1}  \left [ \frac{x-z}{|x-z|^d} -  \frac{y-z}{|y-z|^d}  \right ] h(z) \di z \right \rvert & \leq \| h \|_{L^{\infty}(\bt^d)} \left ( \int_{z \in A_1} |x-z|^{-(d-1)} \di z + \int_{z \in A_1} |y-z|^{-(d-1)} \di z \right ) \\
& \leq  \| h \|_{L^{\infty}(\bt^d)} \left ( \int_{|u| \leq 2 |x-y|} |u|^{-(d-1)} \di u + \int_{|u| \leq 3 |x-y|} |u|^{-(d-1)} \di u \right ) \\
& \leq C \| h \|_{L^{\infty}(\bt^d)}  |x-y| .
\end{align}

On $A_2$, we use the derivative of $K$. By Lemma~\ref{lem:G}, we have the estimate
\be
|\nabla K (x) | \leq \begin{cases} C_d |x|^{-d} + \| \nabla K_0 \|_{L^\infty(B_{1/4}(0))} & x \in B_{1/4}(0) \\
\| \nabla K \|_{L^\infty(B_{1/4}^c(0))} & x \notin B_{1/4}(0) .
\end{cases}
\ee
Consider the straight line segment $[(1-t)x + ty - z]_{t \in [0,1]}$ connecting the points $x-z$ and $y-z$. Observe that, since $|x-z| > 2 |x-y|$, on this line segment
\be
|(1-t)x + ty - z| \geq |x-z| - t|x-y| \geq (1 - \frac{t}{2}) |x-z|  \geq \frac{1}{2} |x-z|.
\ee
Thus the derivative can be bounded by
\be
|\nabla K (x)| \leq C_d \left ( 1 + |x-z|^{-d} \right ) .
\ee
It follows that
\be
|K(x-z) - K(y-z)| \leq C_d \left ( 1 + |x-z|^{-d} \right ) |x-y| .
\ee
Therefore
\begin{align}
I_2 & \leq C_d \| h \|_{L^\infty(\TT^d)} |x-y| \int_{z \in A_2} \left ( 1 + |x-z|^{-d} \right ) \di z \\
& \leq  C_d \| h \|_{L^\infty(\TT^d)} |x-y| \left ( 1 + \int_{\frac{\sqrt{d}}{2} \geq |u| \geq 2|x-y|} |u|^{-d}  \di z \right ) \\
& \leq C_d \| h \|_{L^\infty(\TT^d)} |x-y| \left ( 1 - \log{\frac{2 |x-y|}{\sqrt{d}}} \right )  .
\end{align}

Altogether we have proved that
\begin{align}
\left | \bar E  (x) - \bar E (y) \right | \leq C_d    \| h \|_{L^\infty(\bt^d)} |x-y| \left ( 1 - \log{\frac{2 |x-y|}{\sqrt{d}}} \right ) ,
\end{align}
which concludes the proof.
\end{proof}

\subsection{Existence and regularity of \texorpdfstring{$\widehat U $}{U}}

In this section we will prove the existence of $\widehat U $ and some useful regularity estimates. We note that the proposition below holds in any dimension $d$.

\begin{prop}[Existence and H\"older regularity of $\widehat U $]
\label{prop:hatU-e}
Assume that 
\be \label{barU-apr}
\lVert \nabla \bar U  \rVert_{L^2(\bt^d)} + \|\bar U \|_{L^{\infty}(\mathbb T^d)}\leq M_1.
\ee
Then the equation
\be \label{hatU-eqn}
  \Delta\widehat U =e^{(\bar U +\widehat U )} - 1 \qquad \text{on }\mathbb T^d 
\ee
has a unique solution in $W^{1,2}(\bt^d)$. Furthermore, for any $\alpha \in (0,1)$ this solution satisfies
$$
\|\widehat U \|_{C^{1,\alpha}(\mathbb T^d)} \leq C    \(1 + e^{2M_1}\) .
$$
If in addition, for some $\alpha \in (0,1)$, $\bar U \in C^{0,\alpha}(\bt^d)$, with
$$
\lVert \bar U  \rVert_{C^{0,\alpha}(\bt^d)} \leq M_2 ,
$$
then $\widehat U  \in C^{2,\alpha}(\bt^d)$ with
$$
\lVert \widehat U  \rVert_{ C^{2,\alpha}(\bt^d)} \leq C \exp{\left [ C \left ( M_1 +    (1 + e^{2M_1}) \right) \right ]} \( M_2 +    (1 + e^{2M_1}) \) .
$$
\end{prop}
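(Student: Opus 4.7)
I would prove existence and uniqueness via the direct method of the calculus of variations, and then derive the regularity estimates by a maximum-principle-type $L^\infty$ bound followed by two rounds of elliptic bootstrap. Equation \eqref{hatU-eqn} is the Euler--Lagrange equation of the strictly convex functional
$$
\mc{J}[\varphi] := \frac{1}{2} \int_{\bt^d} |\nabla \varphi|^2 \di x + \int_{\bt^d} e^{\bar U + \varphi} \di x - \int_{\bt^d} \varphi \di x, \qquad \varphi \in W^{1,2}(\bt^d).
$$
Decomposing $\varphi = c + \tilde\varphi$ with $\int_{\bt^d} \tilde\varphi \di x = 0$ (and using $|\bt^d| = 1$), Jensen's inequality gives $\int_{\bt^d} e^{\bar U + \varphi}\di x \geq e^{c + \int_{\bt^d} \bar U \di x}$, so $\mc{J}$ is coercive in $c$ (the exponential dominates as $c \to +\infty$, the linear term $-c$ as $c \to -\infty$), while Poincar\'e controls $\tilde\varphi$ in $W^{1,2}$ through the Dirichlet energy. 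Strict convexity and weak lower semicontinuity then produce a unique minimiser $\widehat U \in W^{1,2}(\bt^d)$ solving \eqref{hatU-eqn}. Uniqueness among all $W^{1,2}$ solutions of the PDE is separate but equally easy: testing the difference equation with $\widehat U_1 - \widehat U_2$ and using monotonicity of $s \mapsto e^s$ forces the difference to be constant, and then exactly zero by the equation.

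The crucial nonlinear step, which I regard as the main obstacle, is the $L^\infty$ estimate $\|\widehat U\|_{L^\infty} \leq M_1$. I plan to obtain it by a weak maximum principle: testing \eqref{hatU-eqn} against the admissible function $(\widehat U - M_1)_+ \in W^{1,2}(\bt^d)$ yields
$$
-\int_{\bt^d} |\nabla (\widehat U - M_1)_+|^2 \di x = \int_{\bt^d} \left(e^{\bar U + \widehat U} - 1\right)(\widehat U - M_1)_+ \di x.
$$
On the set $\{\widehat U > M_1\}$ one has $\bar U + \widehat U > 0$ because $\bar U \geq -M_1$, so the right-hand side is nonnegative; therefore $(\widehat U - M_1)_+$ must be constant, and a short case analysis inserting the constant value back into the equation rules out the nonzero case. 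The symmetric argument with $(\widehat U + M_1)_-$ delivers $\widehat U \geq -M_1$. This $L^\infty$ control is what unlocks everything downstream, since it turns the exponential nonlinearity into a bounded source term with $\|e^{\bar U + \widehat U} - 1\|_{L^\infty} \leq C(1 + e^{2M_1})$.

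Given this bound on the source, Lemma~\ref{Ubar-reg}(ii) applied to $\widehat U$ (after subtracting its mean, which is itself bounded by $M_1$) immediately produces the claimed $C^{1,\alpha}$ estimate. For the $C^{2,\alpha}$ bound I bootstrap once more: if $\bar U \in C^{0,\alpha}$ with $\|\bar U\|_{C^{0,\alpha}} \leq M_2$, then $\widehat U \in C^{1,\alpha}$ implies $\bar U + \widehat U \in C^{0,\alpha}$, and the elementary inequality $[e^g]_{C^{0,\alpha}} \leq e^{\|g\|_{L^\infty}}[g]_{C^{0,\alpha}}$ controls the H\"older seminorm of the source by roughly $e^{2M_1}\bigl(M_2 + C(1 + e^{2M_1})\bigr)$. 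Classical Schauder estimates for $\Delta \widehat U = e^{\bar U + \widehat U} - 1$ on the torus then yield the stated exponential-type bound, the only subtlety being the careful tracking of constants to match the advertised form; the elliptic and Schauder steps themselves are otherwise routine once the $L^\infty$ bound on $\widehat U$ is in hand.
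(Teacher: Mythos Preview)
Your approach is correct and genuinely different from the paper's. Both proofs use the direct method for existence, but they diverge at the regularity step. The paper never proves a direct $L^\infty$ bound on $\widehat U$; instead it runs a Moser-type iteration, testing the weak formulation successively against truncations of $e^{\widehat U}, e^{2\widehat U}, \ldots$ to bootstrap $\|e^{\widehat U}\|_{L^n} \leq e^{M_1}$ for every $n$, and then feeds $\Delta\widehat U \in L^n$ through Calder\'on--Zygmund and Sobolev to reach $C^{1,\alpha}$. Your maximum-principle argument with $(\widehat U - M_1)_+$ and $(-M_1-\widehat U)_+$ short-circuits all of this and yields the sharper bound $\|\widehat U\|_{L^\infty}\leq M_1$ in one step; the subsequent $C^{1,\alpha}$ and Schauder bootstraps are then identical in spirit to the paper's. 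In fact your $L^\infty$ bound is strictly better than what the paper extracts from its iteration (the paper only recovers $\|\widehat U\|_{L^\infty}\leq C(1+e^{2M_1})$ a posteriori from Sobolev embedding), so your $C^{2,\alpha}$ constant is sharper than the stated one---which is of course still sufficient.

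One small technical point to clean up: you call $(\widehat U - M_1)_+$ an ``admissible'' test function because it lies in $W^{1,2}$, but at this stage of the argument $e^{\bar U + \widehat U}$ is only known to be in $L^1$, so the pairing $\int e^{\bar U+\widehat U}\phi$ is a priori only defined for $\phi\in L^\infty\cap W^{1,2}$ (the paper makes exactly this point before introducing its truncations $\widehat U_k$). The fix is routine---test with $\min\bigl((\widehat U - M_1)_+,k\bigr)$, observe that both resulting terms are nonnegative and hence vanish for every $k$, and let $k\to\infty$---but it should be said explicitly.
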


\begin{proof}
We prove existence of $\widehat U $ by finding a minimiser for the functional
$$
h \mapsto E[h]:= \int_{\mathbb T^d} \frac12 |\nabla h|^2 + \Bigl(e^{\bar U +h} - h\Bigr) \di x
$$
among all periodic functions $h \in W^{1,2}(\bt^d)$.
Indeed \eqref{hatU-eqn} is the Euler-Lagrange equation of the above functional.

Notice that since $E[h]$ is a strictly convex functional, solutions of the Euler-Lagrange equation
are minimisers and the minimiser is unique.
Let us now prove existence of a minimiser using the direct method of Calculus of Variations.

Consider a minimising sequence $h_k$, that is
$$
E[h_k] \to \inf_{h}E[h]=:\alpha.
$$
We then need to prove that $h_k$ is uniformly bounded in $W^{1,2}(\bt^d)$ and that the functional $E[h]$ is lower semicontinuous.

Observe that, by choosing $h=-\bar U $, we get 
\be
\alpha \leq E[-\bar U ] =\int_{\mathbb T^d} \frac12 |\nabla \bar U |^2 +  \left ( 1 + \bar U  \right ) \di x
\ee
Using the $L^\infty \cap W^{1,2}(\bt^d)$ bound  \eqref{barU-apr} for $\bar U $, we deduce that
\be
\alpha \leq \frac{1}{2}M_1^2+   (1+M_1) .
\ee
Thus, for sufficiently large $k$,
\be\label{eq:C_1}
E[h_k] \leq C  (1+M_1^2)=: C_1 .
\ee
We observe that
$$
e^{\bar U +s}-s= e^{\bar U } e^s-s\ge e^{-M_1}e^s-s\ge |s|-C_2,
$$
thus, 
\be\label{eq:equibound}
\int_{\mathbb T^d} e^{\bar U +h_k}-h_k \di x \ge \int_{\mathbb T^d} |h_k|-C_2 \di x.
\ee
By equation \eqref{eq:C_1} and \eqref{eq:equibound},
$$
\int_{\mathbb T^d} \frac{1}{2}|\nabla h_k|^2+   (|h_k|-C_2 ) \di x \le E[h_k] \le\alpha+1 \leq C_1+1.
$$
Therefore, by Poincar\'e inequality we obtain that $h_k$ are equibounded in $W^{1,2}(\bt^d):$
$$
||\nabla h_k||_{L^2(\bt^d)}+||h_k||_{L^2(\bt^d)}\le C_3.
$$
Hence, by weak compactness of $W^{1,2}(\bt^d)$, up to a subsequence $h_k$ converges weakly in $W^{1,2}(\bt^d)$ to a function $\widehat U $:
$$
h_k {\rightharpoonup} \widehat U \qquad \text{in ${W^{1,2}(\bt^d)}$}.
$$
Since $W^{1,2}(\bt^d)$ is compactly embedded in $L^2(\bt^d)$, we also have strong convergence:
$$
h_k {\rightarrow} \widehat U \qquad \text{in ${L^{2}(\bt^d)}$}.
$$
Then, up to a further subsequence, we have 
$$
h_{k} \rightarrow \widehat U  \quad a.e.
$$
We claim that $\widehat U $ is a minimiser.
Indeed, by the weak convergence in $W^{1,2}(\bt^d)$ and by strong convergence in $L^2(\bt^d)$ (which implies strong convergence in $L^1(\bt^d)$), and by the lower semicontinuity of the norm under weak convergence we have that:
$$
\liminf_{k\rightarrow \infty}\int_{\mathbb T^d}\frac{1}{2} |\nabla h_k|^2 \di x \ge\int_{\mathbb T^d}\frac{1}{2} |\nabla \widehat U |^2 \di x,
$$
$$
\lim_{k\rightarrow \infty}-\int_{\mathbb T^d} h_k \di x =-\int_{\mathbb T^d} \widehat U  \di x.
$$
Also, by Fatou's Lemma,
$$
\liminf_{k\rightarrow \infty}\int_{\mathbb T^d} e^{\bar U +h_k} \di x \ge \int_{\mathbb T^d} \liminf_{k\rightarrow \infty} e^{\bar U +h_k} \di x =\int_{\mathbb T^d}e^{\bar U +\widehat U } \di x.
$$
In conclusion we obtained that
$$
\alpha=\lim_{k\rightarrow \infty}E[h_k]\ge E[\widehat U ],
$$
which proves that $\widehat U $ is a minimiser.
We now need to check that $\widehat U $ solves the Euler-Lagrange equations.
First, observe that
$$
C_1\ge \alpha=E[\widehat U ]=\int_{\mathbb T^d} \frac{1}{2}|\nabla \widehat U |^2+    \( e^{\bar U }e^{\widehat U }-\widehat U  \) \di x,
$$
and therefore $\nabla \widehat U \in L^2(\bt^d)$, $\widehat U \in L^1(\bt^d)$, $e^{\bar U }\in L^\infty(\bt^d)$, and $e^{\widehat U }\in L^1(\bt^d).$
Let $\phi \in C^\infty(\bt^d), \eta>0$. By minimality of $\widehat U $, 
$$
E[\widehat U ]\le E[\widehat U +\eta \phi].
$$
Then,
\begin{align}
0&\le \frac{E[\widehat U +\eta \phi]-E[\widehat U ]}{\eta}=\frac{1}{\eta}\bigg(\int_{\mathbb T^d} \frac{1}{2}|\nabla \widehat U +\eta\nabla\phi|^2-\frac{1}{2}|\nabla \widehat U |^2 \di x \bigg) \\
&+\frac{1}{\eta}    \bigg(\int_{\mathbb T^d} e^{\bar U }e^{\widehat U +\eta\phi}-e^{\bar U }e^{\widehat U }  \di x \bigg) +\frac{1}{\eta}    \bigg(\int_{\mathbb T^d} -(\widehat U +\eta\phi)+\widehat U   \di x \bigg)\\
&=\int_{\mathbb T^d}\nabla \widehat U \cdot \nabla \phi+\frac{\eta |\nabla \phi|^2}{2}  \di x +   \int_{\mathbb T^d} e^{\bar U  + \widehat U } \,\frac{e^{\eta\phi}-1}{\eta} \di x -   \int_{\mathbb T^d}\phi  \di x.
\end{align}
In the limit as $\eta$ goes to $0$ we obtain
\begin{align}
0\le \lim_{\eta\rightarrow 0}\frac{E[\widehat U +\eta \phi]-E[\widehat U ]}{\eta}=\int_{\mathbb T^d}\nabla \widehat U \cdot \nabla \phi \di x +  \int_{\mathbb T^d} e^{\bar U +\widehat U }\phi \di x -  \int_{\mathbb T^d}\phi \di x \quad \mbox{for\, all\,} \phi \in C^\infty(\bt^d).
\end{align}
Since the latter inequality is valid both for $\phi$ and for $-\phi,$ we have that
\begin{align}\label{eq:eulero_lagrange}
0=\int_{\mathbb T^d}\nabla \widehat U \cdot \nabla \phi+  \(e^{\bar U + \widehat U }-1\)\phi \di x=\int_{\mathbb T^d} -\Delta \widehat U \, \phi+  \(e^{\bar U +\widehat U }-1\)\phi \di x  \quad \mbox{for\, all\,} \phi \in C^\infty(\bt^d).
\end{align}
By the arbitrariness of $\phi$, \eqref{eq:eulero_lagrange} implies that
\be\label{eq:EL}
  \Delta \widehat U =e^{\bar U +\widehat U }-1 \qquad \text{on }\mathbb T^d.
\ee
We now prove the desired estimates on $\widehat U $. Our goal is to control $\|\widehat U \|_{C^{2,\alpha}(\mathbb T^d)}$ .
To do that, it is enough to prove that
\be \label{U-Holder-ap}
\|\widehat U \|_{C^{0,\alpha}(\mathbb T^d)} \leq C .
\ee
Indeed, since $\bar U \in C^{0,\alpha},$ then by equation \eqref{eq:EL} we will have $\Delta \widehat U \in C^{0,\alpha}$ and, thanks to Schauder's estimates \cite[Chapter 4]{GT}, this implies that $\widehat U \in C^{2,\alpha}$. To obtain \eqref{U-Holder-ap}, we will use a priori estimates on the equation satisfied by $\widehat U$. For this we will need suitable $L^p(\bt^d)$ estimates on $e^U$, which we will derive via energy estimates, that is, by using appropriate test functions in \eqref{eq:eulero_lagrange}.

In order to give a meaning to equation \eqref{eq:eulero_lagrange}, we need $\phi$ to be at least in $L^\infty\cap W^{1,2}(\bt^d)$. We will now build a test function in $L^\infty\cap W^{1,2}(\bt^d)$ that will allow us to prove a regularity estimate on $\widehat U $. 
Let us consider the truncated function
$$
\widehat U_{k}:=(\widehat U \wedge k), \quad \text{for all} \, k\in \mathbb N.
$$
Since $e^{\widehat U_{k}}\in L^\infty(\bt^d)$ and $\nabla \widehat U  \in L^2(\bt^d)$,
$$
\nabla e^{\widehat U_{k}}=e^{\widehat U_{k}}\nabla \widehat U_{k}=e^{\widehat U_{k}}\nabla \widehat U \chi_{\{\widehat U <k\}} \in L^2(\bt^d);
$$
thus $e^{\widehat U_{k}}\in L^\infty \cap W^{1,2}(\bt^d),$ and we can use it as a test function in equation \eqref{eq:eulero_lagrange}:
\begin{align}
0&=  \int_{\mathbb T^d}\nabla \widehat U \cdot \nabla e^{\widehat U_{k}} \di x + \int_{\bt^d} \(e^{\bar U }e^{(\widehat U +\widehat U_{k})}-e^{\widehat U_{k}}\) \di x \\
& = \int_{\mathbb T^d}\nabla \widehat U \cdot e^{\widehat U_{k}}\nabla \widehat U \chi_{\{\widehat U <k\}} \di x + \int_{\bt^d} \(e^{\bar U }e^{(\widehat U +\widehat U_{k})}-e^{\widehat U_{k}}\) \di x \\
&= \int_{\mathbb T^d}|\nabla \widehat U |^2 e^{\widehat U_{k}} \chi_{\{\widehat U <k\}} \di x +\int_{\mathbb T^d}e^{\bar U }e^{(\widehat U +\widehat U_{k})} \di x -\int_{\mathbb T^d}e^{\widehat U_{k}} \di x . \label{eU-L1}
\end{align}
Since $\int_{\mathbb T^d}|\nabla \widehat U |^2 e^{\widehat U_{k}} \chi_{\{\widehat U <k\}} \di x \ge 0,$ and $C_0:=e^{-M_1}\le e^{\bar U }\le e^{M_1}$, \eqref{eU-L1} implies that
\be
C_0 \int_{\mathbb T^d}e^{\widehat U +\widehat U_{k}}\le \int_{\mathbb T^d}e^{\widehat U_{k}}.
\ee
By definition of $\widehat U_{k}$ we have that $e^{\widehat U_{k}}$ is increasing and converges monotonically to $e^{\widehat U },$ hence by the Monotone Convergence Theorem
\be
C_0 \int_{\mathbb T^d}e^{\widehat U +\widehat U }=C_0 \int_{\mathbb T^d}e^{2\widehat U }\le \int_{\mathbb T^d}e^{\widehat U } ,
\ee
and we obtain that if $e^{\widehat U }\in L^1(\bt^d),$ then $e^{\widehat U }\in L^2(\bt^d).$ In particular, since $C_0=e^{-M_1}$,
\begin{align} \label{bar-exp_l2-l1}
\lVert e^{\widehat U } \rVert_{L^2(\bt^d)} & \leq e^{M_1/2} \left( \int_{\bt^d} e^{\widehat U } \right)^{1/2}.
\end{align}
Since $\widehat U $ is a solution of \eqref{eq:EL}, we have
$$
0 =   \int_{\bt^d} \Delta \widehat U \di x = \int_{\bt^d} e^{U }- 1 \di x .
$$
Since $U = \bar U + \widehat U $, it follows that
$$
1 = \int_{\bt^d} e^{\bar U + \widehat U } \di x \geq e^{-M_1} \int_{\bt^d} e^{\widehat U } \di x .
$$
Thus
\be
\lVert e^{\widehat U } \rVert_{L^1(\bt^d)} = \int_{\bt^d} e^{\widehat U } \di x \leq e^{M_1},
\ee
and hence \eqref{bar-exp_l2-l1} implies that
\begin{align} \label{bar-exp_l2}
\lVert e^{\widehat U } \rVert_{L^2(\bt^d)} & \leq e^{M_1} .
\end{align}
If we now use the function $e^{2\widehat U_{k}}$ as test function in the equation \eqref{eq:eulero_lagrange}, we obtain
\begin{align}
0&=   \int_{\mathbb T^d}\nabla \widehat U \cdot \nabla e^{2\widehat U_{k}} \di x + \int_{\bt^d} \(e^{\bar U }e^{\widehat U+2\widehat U_{k}} e^{2\widehat U_{k}}\) \di x \\
& =   \int_{\mathbb T^d}2\nabla \widehat U \cdot e^{2\widehat U_{k}}\nabla \widehat U \chi_{\{\widehat U <k\}} \di x + \int_{\bt^d}\(e^{\bar U }e^{\widehat U +2\widehat U_{k}}-e^{2\widehat U_{k}}\) \di x\\
&=2   \int_{\mathbb T^d}|\nabla \widehat U |^2 e^{2\widehat U_{k}} \chi_{\{\widehat U <k\}} \di x +\int_{\mathbb T^d}e^{\bar U }e^{\widehat U +2\widehat U_{k}} \di x -\int_{\mathbb T^d}e^{2\widehat U_{k}} \di x .
\end{align}
Thus, as in the previous case,
\be
C_0 \int_{\mathbb T^d}e^{\widehat U +2\widehat U_{k}} \di x \le \int_{\mathbb T^d}e^{2\widehat U_{k}} \di x,
\ee
and by Monotone Convergence as $k\to \infty$, recalling \eqref{bar-exp_l2} we get
\be
C_0 \int_{\mathbb T^d}e^{3\widehat U }\le \int_{\mathbb T^d}e^{2\widehat U }\le e^{2M_1}.
\ee
Hence
\be
\lVert e^{\widehat U } \rVert_{L^3(\bt^d)}^3 \leq e^{3M_1} .
\ee
Iterating $n$ times, with $n>d,$ we obtain
$$
\lVert e^{\widehat U } \rVert_{L^n(\bt^d)} \leq e^{M_1}
$$
and hence
$$
  \Delta \widehat U =e^{\bar U +\widehat U }-1 \in L^n(\bt^d),
$$
with
\be \label{eU-Ln}
\lVert e^{\bar U +\widehat U }-1 \rVert_{L^n(\bt^d)} \leq 1 + e^{2M_1} .
\ee
By standard regularity estimates for the Poisson equation \cite[Section 9.4]{GT},
$$
\lVert \widehat U \rVert_{ W^{2,n}(\bt^d)} \leq C    \(1 + e^{2M_1}\) .
$$
Using Sobolev embedding for $n$ sufficiently large, we deduce that for any $\alpha \in (0,1)$, $\widehat U \in C^{1, \alpha}(\bt^d)$, with
$$
\lVert \widehat U \rVert_{C^{1, \alpha}(\bt^d)} \leq C    \(1 + e^{2M_1}\) .
$$
Then, if
$$
\lVert \bar U \rVert_{C^{0,\alpha}(\bt^d)} \leq M_2,
$$
we have
$$
\lVert U \rVert_{C^{0,\alpha}(\bt^d)} \leq M_2 + C    \(1 + e^{2M_1}\),
$$
and so
$$
\lVert e^{U }\rVert_{C^{0,\alpha}(\bt^d)} \leq C \exp{\left [ C \left ( M_1 +    \(1 + e^{2M_1}\) \right) \right ]} \( M_2 +    \(1 + e^{2M_1}\) \) .
$$
Thus by Schauder estimates \cite[Chapter 4]{GT}
\begin{align}
\lVert \widehat U \rVert_{C^{2,\alpha}(\bt^d)} & \leq C \left ( \lVert \widehat U \rVert_{L^{\infty}(\bt^d)} +    \lVert e^{U }- 1 \rVert_{C^{0, \alpha}(\bt^d)} \right ) \\
& \leq C\,    \exp{\left [ C \left ( M_1 +    \(1 + e^{2M_1}\) \right) \right ]}  \( M_2 +    \(1 + e^{M_1}\) \) .
\end{align}

\end{proof}

\subsection{Stability with respect to the charge density} \label{sec:electric-stability}
Next we wish to study the stability of the electric field $\nabla U $ with respect to the charge density $\rho $. To measure stability we shall use the $2$-Wasserstein distance.
\subsubsection{Wasserstein Distances}\label{Wasserstein Distances}

To define the Wasserstein (or Monge-Kantorovich) distances, we first define a \textit{coupling} between two measures. Let $(\Omega, \mc{F})$ be a measurable space, and let $\mu, \nu \in \mc{P}(\Omega)$ be probability measures. A coupling is a measure on the product space, $\pi \in \mc{P}(\Omega \times \Omega)$, which has marginals $\mu$ and $\nu$. This means that for all $A \in \mc{F}$,
\be
\pi(A \times \Omega) = \mu(A) , \quad
\pi(\Omega \times A) = \nu(A) .
\ee
We denote the set of couplings of $\mu$ and $\nu$ by $\Pi(\mu, \nu)$. We now give the definition of the Wasserstein distances. For further background on optimal transport distances, see \cite{Vil03}.

\begin{defi}[Wasserstein distances]  \label{def:Wass}
Let $p \in [1, \infty)$. Let $(\Omega, d)$ be a Polish space and let $\mc{F}$ be its Borel $\sigma$-algebra. Let $\mu, \nu$ be Borel measures satisfying
\be \label{measure-moment}
\int_\Omega d(x, x_0)^p \mu(\di x), \quad \int_\Omega d(x, x_0)^p \nu(\di x) < \infty
\ee
for some $x_0$. The Wasserstein distance of order $p$ between $\mu$ and $\nu$ is defined by
\be
W_p^p(\mu, \nu) : = \inf_{\pi \in \Pi(\mu,\nu)} \int_{\Omega \times \Omega} d(x,y)^p \pi(\di x \di y) .
\ee
\end{defi}

In this paper, we will use the case $\Omega = \bt^d \times \br^d$. The distance $d$ is given by
\be
d\left ((x_1, v_1), (x_2, v_2) \right ) = |x_1 - x_2| + |v_1 - v_2|,
\ee
where for the $x$ coordinate we use the distance on the torus, defined by \eqref{def:torus-metric}.

 The main result of Section \ref{sec:electric-stability} is the following proposition. Again this holds in every dimension.
\begin{prop} \label{prop:Ustab}
For each $i=1,2$, let $\bar U _i$ be a solution of
\be 
 \Delta \bar U _i= h_i - 1,
\ee
where $h_i \in L^{\infty} \cap L^{(d+2)/d}(\bt^d)$. Then
\be \label{stab-Ubar}
\lVert \nabla \bar U _1 - \nabla \bar U _2 \rVert^2_{L^2(\bt^d)} \leq   \max_i\, \lVert h_i \rVert_{L^{\infty}(\bt^d)}  \, W^2_2(h_1, h_2) .
\ee
In addition, let $\widehat U _i$ be a solution of
\be 
 \Delta \widehat U _i= e^{\bar U _i + \widehat U _i} - 1 .
\ee
Then
\begin{align}
\label{stab-Uhat}
\lVert \nabla \widehat U _1 - \nabla \widehat U _2 \rVert^2_{L^2(\bt^d)} &\leq \exp\,\exp  {\left [ C_d    \left ( 1 + \max_i\, \lVert h_i \rVert_{L^{(d+2)/d}(\bt^d)}  \right ) \right ]}
 \max_i\, \lVert h_i \rVert_{L^{\infty}(\bt^d)} \, W^2_2(h_1, h_2).
\end{align}
\end{prop}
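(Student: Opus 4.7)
The proof decomposes into the two stated inequalities.

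For the bound on $\bar U$, this is a Loeper-type estimate for the linear Poisson equation on the torus. The plan is to follow Loeper's optimal-transport argument. Starting from
$$\|\nabla(\bar U_1 - \bar U_2)\|_{L^2}^2 = -\int_{\bt^d} (\bar U_1 - \bar U_2)\,\Delta(\bar U_1 - \bar U_2)\,dx = -\int_{\bt^d}(\bar U_1 - \bar U_2)(h_1 - h_2)\,dx,$$
I would introduce the displacement interpolation $h_t$ between $h_1$ and $h_2$, solve $\Delta U_t = h_t - 1$, and recognize the right-hand side as $\int_0^1 \tfrac{d}{dt}\|\nabla U_t\|_{L^2}^2 \,dt$ rewritten via the continuity equation $\partial_t h_t + \nabla\!\cdot(h_t v_t) = 0$. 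Cauchy--Schwarz in time combined with McCann's bound $\|h_t\|_{L^\infty} \leq \max_i\|h_i\|_{L^\infty}$ along displacement interpolation yields \eqref{stab-Ubar}.

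For the stability of $\widehat U$, set $w := \widehat U_1 - \widehat U_2$. Subtracting the two equations and writing the exponential difference as an integral,
$$e^{\bar U_1 + \widehat U_1} - e^{\bar U_2 + \widehat U_2} = a(x)\bigl[(\bar U_1 - \bar U_2) + w\bigr], \qquad a(x) := \int_0^1 e^{(1-t)(\bar U_2+\widehat U_2) + t(\bar U_1+\widehat U_1)}\,dt > 0,$$
I obtain the scalar elliptic identity $\Delta w - a\,w = a(\bar U_1 - \bar U_2)$. The key energy estimate is produced by testing against the shifted function $w - m$, where the constant $m := \bigl(\int a\,w\,dx\bigr)/\bigl(\int a\,dx\bigr)$ is the $a$-weighted mean of $w$. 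This specific choice is essential, because it is what makes the cross term vanish: $\int a(w-m)\,dx = 0$. Since $\int \Delta w\,dx = 0$, the added constant does not affect the Dirichlet term, and integration by parts gives
$$\|\nabla w\|_{L^2}^2 + \int_{\bt^d} a\,(w-m)^2\,dx = -\int_{\bt^d} a\,(\bar U_1 - \bar U_2)(w-m)\,dx.$$
Cauchy--Schwarz with weight $a$ followed by Young's inequality absorbs half of the weighted $L^2$ norm of $w-m$ into the left-hand side, leaving
$$\|\nabla w\|_{L^2}^2 \leq \tfrac{1}{2}\|a\|_{L^\infty(\bt^d)}\,\|\bar U_1 - \bar U_2\|_{L^2(\bt^d)}^2.$$
Poincaré's inequality (using that $\bar U_1 - \bar U_2$ has zero mean by our normalisation of $\bar U_i$) combined with the first part of the proposition then yields \eqref{stab-Uhat} up to identifying the constant.

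To match the claimed double-exponential constant, I would bound $\|a\|_{L^\infty} \leq e^{\max_i\|\bar U_i + \widehat U_i\|_{L^\infty}}$ and invoke Proposition~\ref{prop:regU}: there, $\|\bar U_i\|_{L^\infty}$ is controlled linearly by $1 + \|h_i\|_{L^{(d+2)/d}}$, while $\|\widehat U_i\|_{L^\infty}$ is controlled by a single exponential of the same quantity via the $C^{1,\alpha}$ estimate. Composition of these bounds gives $\|a\|_{L^\infty} \leq \exp\exp\bigl(C_d(1 + \max_i\|h_i\|_{L^{(d+2)/d}})\bigr)$, which is precisely the prefactor appearing in \eqref{stab-Uhat}.

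The main obstacle is the need for the \emph{weighted} mean in the test function. Using $w$ directly or subtracting the ordinary mean leaves an uncontrolled linear-in-$(\bar U_1 - \bar U_2)$ contribution that cannot be absorbed, producing an estimate of the wrong order (linear rather than quadratic in $W_2$). Once the weighted mean is chosen so that $\int a(w-m)\,dx = 0$, the energy identity becomes coercive, and the rest of the argument is a bookkeeping exercise combining Poincaré, Proposition~\ref{prop:regU}, and Part 1 of the present proposition.
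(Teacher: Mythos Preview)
Your proof is correct but diverges from the paper's for the $\widehat U$ estimate. The paper splits the exponential difference asymmetrically as $e^{\bar U_1}(e^{\widehat U_1}-e^{\widehat U_2}) + e^{\widehat U_2}(e^{\bar U_1}-e^{\bar U_2})$ and tests directly with $w=\widehat U_1-\widehat U_2$, with no mean subtraction. Coercivity is obtained from the elementary sign inequality $(e^x-e^y)(x-y)\geq e^{\min\{x,y\}}(x-y)^2$ applied to the first piece, which produces a lower bound $A^{-1}\|w\|_{L^2}^2$; the second piece is then absorbed via Young's inequality with a tuned parameter, using both upper and lower $L^\infty$ bounds on the exponentials, to arrive at $\|\nabla w\|_{L^2}^2 \leq \tfrac14 A^3\|\bar U_1-\bar U_2\|_{L^2}^2$. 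Your integral mean-value representation together with the $a$-weighted mean shift is a clean alternative that requires only the \emph{upper} bound $\|a\|_{L^\infty}$ and gives the slightly sharper constant $\tfrac12\|a\|_{L^\infty}$. However, your claim that testing with $w$ directly ``cannot be absorbed'' overstates the obstacle: the paper does precisely this, the point being that their asymmetric split makes the sign of the dominant term manifest, which plays the same role as your weighted-mean cancellation (and one could equally well test your equation $\Delta w - aw = a(\bar U_1-\bar U_2)$ against $w$ and absorb using the lower bound $a\geq e^{-\max_i\|\bar U_i+\widehat U_i\|_\infty}$). Both routes feed into the same Poincar\'e-plus-Proposition~\ref{prop:regU} conclusion and the same double-exponential prefactor. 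For the $\bar U$ part, the paper simply quotes the Loeper estimate as a lemma rather than re-deriving it.
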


For the Poisson part, we will use a stability estimate, with respect to the Wasserstein distance, for Poisson's equation on the torus. A proof may be found in \cite{IHK2}; see also \cite{Loep} for the case where $x \in \br^d$.
\begin{lem} \label{lem:Loep}
For each $i=1,2$, let $\bar U _i$ be a solution of
\be 
 \Delta \bar U _i= h_i - 1,
\ee
where $h_i \in L^{\infty}(\bt^d)$. Then
$$
\lVert \nabla \bar U _1 - \nabla \bar U _2 \rVert^2_{L^2(\bt^d)} \leq   \max_i\, \lVert h_i \rVert_{L^{\infty}(\bt^d)} \, W^2_2(h_1, h_2) .
$$
\end{lem}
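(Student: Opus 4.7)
The plan is to follow Loeper's original argument \cite{Loep}, adapted to the torus: combine the standard elliptic energy identity for $\bar U_1 - \bar U_2$ with a displacement-interpolation argument from optimal transport. Setting $u := \bar U_1 - \bar U_2$, periodicity and the PDEs $\Delta \bar U_i = h_i - 1$ give, after integration by parts,
$$
\lVert \nabla \bar U_1 - \nabla \bar U_2 \rVert_{L^2(\bt^d)}^2 = -\int_{\bt^d} u \, \Delta u \, dx = \int_{\bt^d} u \, (h_2 - h_1) \, dx.
$$
So the task reduces to estimating the linear functional $h \mapsto \int u \, dh$ evaluated at the signed measure $h_2 - h_1$ in terms of $W_2(h_1,h_2)$, with the correct $L^\infty$-dependence.

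To do this I would invoke the existence on $\bt^d$ of an optimal transport map $T : \bt^d \to \bt^d$ pushing $h_1$ onto $h_2$ for the squared torus-distance cost, and form the McCann displacement interpolation $h_t := \big((1-t)\mathrm{id} + t T\big)_{\#} h_1$ for $t \in [0,1]$. Differentiating $t \mapsto \int u\, dh_t$ via the change-of-variables formula and applying Cauchy-Schwarz in $L^2(dt \otimes h_1(x)\,dx)$ yields
\begin{align}
\Bigl| \int_{\bt^d} u \, d(h_2 - h_1) \Bigr|
&= \Bigl| \int_0^1 \int_{\bt^d} \nabla u\big((1-t)x + tT(x)\big) \cdot (T(x) - x) \, h_1(x)\, dx\, dt \Bigr| \\
&\leq \Bigl( \int_0^1 \int_{\bt^d} |\nabla u|^2 \, dh_t \, dt \Bigr)^{1/2} \Bigl( \int_{\bt^d} |T(x) - x|^2 h_1(x)\, dx \Bigr)^{1/2}.
\end{align}
The second factor is exactly $W_2(h_1,h_2)$ by definition of $T$. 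If one also has the uniform displacement-interpolation bound $\lVert h_t \rVert_{L^\infty(\bt^d)} \leq \max_i \lVert h_i \rVert_{L^\infty(\bt^d)}$ for all $t \in [0,1]$, then the first factor is at most $\sqrt{\max_i \lVert h_i \rVert_{L^\infty}} \, \lVert \nabla u \rVert_{L^2(\bt^d)}$. Plugging back into the energy identity and cancelling one factor of $\lVert \nabla u \rVert_{L^2}$ gives the claim.

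The main obstacle is precisely that $L^\infty$-bound on $h_t$. On $\br^d$ it is the limiting case $p = \infty$ of McCann's displacement convexity of $\rho \mapsto \int \rho^p$ for $p \geq 1$; concretely, it follows from the Monge-Amp\`ere relation $h_t(T_t(x)) \det(DT_t(x)) = h_1(x)$ applied to $T_t = (1-t)\mathrm{id} + t T = \nabla \big((1-t)|x|^2/2 + t\varphi\big)$ (with $T = \nabla\varphi$ the Brenier map), together with the concavity of $\det^{1/d}$ on nonnegative symmetric matrices. On the flat torus the cut locus is a null set, so one can lift to $\br^d$ and run the same computation, giving the desired estimate with the $\max_i$ on the right-hand side. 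The clean statement of this $L^\infty$-displacement bound on $\bt^d$ is exactly the content of \cite{IHK2}, which can be invoked as a black box to close the argument.
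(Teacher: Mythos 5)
The paper does not actually prove Lemma~\ref{lem:Loep}: it is stated with a pointer to \cite{IHK2} for the torus case and to Loeper \cite{Loep} for $\br^d$, so there is no in-paper argument to compare against. Your proof is correct, and it is essentially the standard argument behind those citations; the only difference from Loeper's own write-up is cosmetic, in that he bounds $\partial_t \nabla \bar U_t$ in $L^2$ along the displacement interpolation using the continuity equation and the fact that $\nabla\Delta^{-1}\nabla\cdot$ has norm $1$ on $L^2$, whereas you use the duality identity $\|\nabla u\|_{L^2}^2=\int_{\bt^d} u\,(h_2-h_1)\,\di x$ and differentiate $t\mapsto\int_{\bt^d} u\, h_t\,\di x$. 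Both routes rest on exactly the two ingredients you isolate: the geodesic velocity has $L^2(h_t)$-norm equal to $W_2(h_1,h_2)$, and $\|h_t\|_{L^\infty}\le\max_i\|h_i\|_{L^\infty}$, which on the flat torus follows as in $\br^d$ by lifting (the cut locus is negligible and the lifted optimal map is the gradient of a convex function with $\bz^d$-periodic displacement), so the concavity of $\det^{1/d}$ in the Monge--Amp\`ere relation applies verbatim. Two details are worth recording to make the sketch airtight: the $h_i$ here are probability densities (nonnegativity is implicit in the application, and $\Delta\bar U_i=h_i-1$ forces $\int_{\bt^d}h_i\,\di x=1$), so $W_2(h_1,h_2)$ is well defined; and the differentiation of $u$ along the geodesics is licit because $h_i\in L^\infty(\bt^d)$ gives $u=\bar U_1-\bar U_2\in C^{1,\alpha}(\bt^d)$ by Lemma~\ref{Ubar-reg}, while for a.e.\ $x$ the interpolant $T_t(x)$ moves with constant speed $d(x,T(x))$, so the second Cauchy--Schwarz factor is exactly $W_2$ for the torus metric, matching the convention \eqref{def:torus-metric} used in the paper.
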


For the nonlinear part we derive a suitable energy estimate.
\begin{lem} \label{lem:hatU-stab}
For each $i=1,2$, let $\widehat U _i \in  W^{1,2} \cap L^{\infty}(\bt^d)$ be a solution of
\be \label{PoiU}
 \Delta \widehat U_i =e^{\bar U_i +\widehat U_i } - 1 ,
\ee
for some given potentials $\bar{U}_i  \in L^{\infty}(\bt^d)$ . Then
\be  \label{H1dot}
  \lVert \nabla \widehat{U}_1  - \nabla \widehat{U}_2  \rVert^2_{L^2(\bt^d)} \leq \widehat{C}  \lVert \bar{U}_1  - \bar{U}_2  \rVert^2_{L^2(\bt^d)} ,
\ee
where $\widehat{C} $ depends on the $L^{\infty}$ norms of $\widehat{U}_i $ and $\bar{U}_i $. More precisely, $\widehat{C} $ can be chosen such that
$$
\widehat{C}   \leq  \exp{\left [ C_d \left ( \max_i\,  \lVert \bar{U}_i  \rVert_{L^{\infty}(\bt^d)} + \max_i\, \lVert \widehat{U}_i  \rVert_{L^{\infty}(\bt^d)}  \right )\right ]} ,
$$
for some sufficiently large dimensional constant $C_d$.

\end{lem}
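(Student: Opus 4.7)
The strategy is a standard energy estimate. I would subtract the two equations \eqref{PoiU} and test the resulting identity against the natural quantity $\widehat U_1 - \widehat U_2$; since each $\widehat U_i \in W^{1,2}\cap L^\infty(\bt^d)$, integration by parts on the periodic torus is legitimate and yields
\be
\int_{\bt^d} |\nabla(\widehat U_1 - \widehat U_2)|^2 \di x = -\int_{\bt^d}\bigl(e^{\bar U_1 + \widehat U_1} - e^{\bar U_2 + \widehat U_2}\bigr)(\widehat U_1 - \widehat U_2)\di x.
\ee

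The algebraic heart of the argument is then to split the exponential difference by adding and subtracting $e^{\bar U_2 + \widehat U_1}$:
\be
e^{\bar U_1 + \widehat U_1} - e^{\bar U_2 + \widehat U_2} = e^{\widehat U_1}\bigl(e^{\bar U_1} - e^{\bar U_2}\bigr) + e^{\bar U_2}\bigl(e^{\widehat U_1} - e^{\widehat U_2}\bigr).
\ee
The second summand, paired against $\widehat U_1 - \widehat U_2$, is manifestly non-negative since $x\mapsto e^x$ is monotone, and will play the role of a coercive lower-order term; the first summand, being proportional to $\bar U_1 - \bar U_2$ at leading order, will be absorbed as a forcing term.

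Using the elementary pointwise inequality $(e^a - e^b)(a-b) \geq e^{-\max(|a|,|b|)}(a-b)^2$ together with the trivial lower bound $e^{\bar U_2} \geq e^{-\|\bar U_2\|_{L^\infty}}$, and setting $c := \exp\bigl(-\max_i \|\bar U_i\|_{L^\infty} - \max_i \|\widehat U_i\|_{L^\infty}\bigr)$, one obtains
\be
\int_{\bt^d} e^{\bar U_2}\bigl(e^{\widehat U_1} - e^{\widehat U_2}\bigr)(\widehat U_1 - \widehat U_2)\di x \geq c\,\|\widehat U_1 - \widehat U_2\|_{L^2(\bt^d)}^2.
\ee
For the forcing piece, the Lipschitz bound $|e^{\bar U_1} - e^{\bar U_2}| \leq e^{\max_i \|\bar U_i\|_{L^\infty}}|\bar U_1 - \bar U_2|$, followed by Cauchy--Schwarz and Young's inequality, gives
\be
\biggl|\int_{\bt^d} e^{\widehat U_1}\bigl(e^{\bar U_1} - e^{\bar U_2}\bigr)(\widehat U_1 - \widehat U_2)\di x \biggr| \leq \frac{A}{2c}\|\bar U_1 - \bar U_2\|_{L^2(\bt^d)}^2 + \frac{c}{2}\|\widehat U_1 - \widehat U_2\|_{L^2(\bt^d)}^2,
\ee
for some constant $A \leq \exp\bigl[C_d\bigl(\max_i \|\bar U_i\|_{L^\infty} + \max_i \|\widehat U_i\|_{L^\infty}\bigr)\bigr]$.

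Combining the two bounds, the $\tfrac{c}{2}\|\widehat U_1 - \widehat U_2\|_{L^2}^2$ contribution on the right is absorbed into the coercive term on the left, and discarding the resulting non-negative $L^2$ term yields exactly \eqref{H1dot} with a constant $\widehat C$ of the required exponential form. The main point worth flagging — and the place where one might anticipate trouble — is that $\widehat U_1 - \widehat U_2$ need not have zero mean, so that no Poincar\'e inequality is available to pass from an $L^2$ bound to an $\dot H^1$ bound. This obstruction is resolved for free: the exponential nonlinearity itself supplies the missing $L^2$ control through its strict monotonicity, which is the one genuine place in the uniqueness argument where the massless-electron structure is helpful rather than an obstacle.
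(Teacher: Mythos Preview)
Your proof is correct and follows essentially the same approach as the paper: test the difference of the two equations against $\widehat U_1-\widehat U_2$, split the exponential into a monotone piece (yielding a coercive $L^2$ lower bound) and a Lipschitz piece (the forcing term), and close with Young's inequality. The only cosmetic difference is the choice of intermediate term in the splitting---the paper adds and subtracts $e^{\bar U_1+\widehat U_2}$ rather than $e^{\bar U_2+\widehat U_1}$, so that the coercive factor is $e^{\bar U_1}$ and the forcing prefactor is $e^{\widehat U_2}$---but this is a symmetric and inconsequential choice leading to the same constant up to relabelling.
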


\begin{proof}
For convenience, we define the constant 
$$
A  : = \exp{\left [  \max_i\,  \lVert \bar{U}_i  \rVert_{L^{\infty}(\bt^d)} + \max_i\, \lVert \widehat{U}_i  \rVert_{L^{\infty}(\bt^d)}\right ]}
$$
which will be fixed throughout the proof.

Subtracting the two equations \eqref{PoiU}, we deduce that $ \widehat{U}_1  - \widehat{U}_2 $ satisfies
\be  \label{Poi-diff}
  \Delta ( \widehat{U}_1  - \widehat{U}_2  ) = e^{\bar{U}_1 +\widehat{U}_1 } - e^{\bar{U}_2 +\widehat{U}_2 }  = e^{\bar{U}_1 } \left ( e^{\widehat{U}_1 } - e^{\widehat{U}_2 } \right ) + e^{\widehat{U}_2 } \left ( e^{\bar{U}_1 } - e^{\bar{U}_2 } \right ) .
\ee

The weak form of \eqref{Poi-diff} extends by density to test functions in $L^\infty \cap W^{1,2}(\bt^d)$. Since $ \widehat{U}_1  - \widehat{U}_2 $ has this regularity by assumption, it is an admissible test function. Hence
\begin{align} \label{H1dot-IBP}
-   \int_{\bt^d} \lvert \nabla \widehat{U}_1  - \nabla \widehat{U}_2  \rvert^2 \di x &=\int_{\bt^d } e^{\bar{U}_1 } \left ( e^{\widehat{U}_1 } - e^{\widehat{U}_2 } \right ) ( \widehat{U}_1  - \widehat{U}_2  )  \di x \\
& \qquad +  \int_{\bt^d } e^{\widehat{U}_2 } \left ( e^{\bar{U}_1 } - e^{\bar{U}_2 } \right ) ( \widehat{U}_1  - \widehat{U}_2  )  \di x   = : I_1 + I_2 .
\end{align}

Observe that $(e^x - e^y)(x-y)$ is always non-negative. Furthermore, by the Mean Value Theorem applied to the function $x \mapsto e^x,$ we have a lower bound
\be
(e^x - e^y)(x-y) \geq e^{\min\{x,y\}} (x-y)^2 .
\ee
We use this to bound $I_1$ from below:
\be  \label{H1dot-term1}
I_1 \geq e^{- \lVert \bar U_1  \rVert_{L^{\infty}(\bt^d)} - \max_i\, \lVert \widehat{U}_i  \rVert_{L^{\infty}(\bt^d)}} \lVert \widehat{U}_1  - \widehat{U}_2  \rVert_{L^2(\bt^d)}^2  \geq A ^{-1}  \lVert \widehat{U}_1  - \widehat{U}_2  \rVert_{L^2(\bt^d)}^2 .
\ee
For $I_2$ we use the fact that, again by the Mean Value Theorem,
\be
|e^x - e^y| \leq e^{\max\{x,y\}} |x-y|.
\ee
Therefore
\be 
I_2  \leq e^{\lVert \widehat{U}_2  \rVert_{L^{\infty}(\bt^d)} + \max_i\, \lVert \bar U_i  \rVert_{L^{\infty}(\bt^d)}} \int_{\bt^d} |\bar{U}_1  - \bar{U}_2 | |\widehat{U}_1  - \widehat{U}_2 |  \di x  \leq A  \int_{\bt^d} |\bar{U}_1  - \bar{U}_2 | |\widehat{U}_1  - \widehat{U}_2 |  \di x .
\ee
By the Cauchy-Schwarz inequality, for any choice of $\alpha > 0$ 
\be  \label{H1dot-term2}
I_2 \leq A  \left ( \alpha \lVert \bar{U}_1  - \bar{U}_2  \rVert_{L^2(\bt^d)}^2 + \frac{1}{4\alpha} \lVert \widehat{U}_1  - \widehat{U}_2  \rVert_{L^2(\bt^d)}^2   \right ) .
\ee
Substituting \eqref{H1dot-term1} and \eqref{H1dot-term2} into \eqref{H1dot-IBP}, we obtain
\begin{align} \label{H1dot-alpha}
  \int_{\bt^d} \lvert \nabla \widehat{U}_1-\nabla \widehat{U}_2  \rvert^2 \di x & \leq A  \left ( \alpha \lVert \bar{U}_1  - \bar{U}_2  \rVert_{L^2(\bt^d)}^2 + \frac{1}{4\alpha} \lVert \widehat{U}_1-\widehat{U}_2  \rVert_{L^2(\bt^d)}^2   \right ) - A ^{-1} \lVert \widehat{U}_1  - \widehat{U}_2  \rVert_{L^2(\bt^d)}^2 .
\end{align}
We wish to choose $\alpha$ as small as possible such that
\be
\frac{A }{4 \alpha} - A ^{-1} \leq 0 .
\ee
Thus the optimal choice is $\alpha = \frac{A ^2}{4}$. Substituting this into \eqref{H1dot-alpha} gives
\be 
  \int_{\bt^d} \lvert \nabla \widehat{U}_1  - \nabla \widehat{U}_2  \rvert^2 \di x \leq \frac{1}{4} A ^3 \lVert \bar{U}_1  - \bar{U}_2  \rVert_{L^2(\bt^d)}^2 .
\ee
This completes the proof of \eqref{H1dot}.

\end{proof}

\begin{proof}[Proof of Proposition~\ref{prop:Ustab}]
Estimate \eqref{stab-Ubar} follows directly from  Lemma~\ref{lem:Loep}.
The only remaining task is to prove \eqref{stab-Uhat}. We want to apply Lemma~\ref{lem:hatU-stab}, which requires $L^\infty(\bt^d)$ estimates on $\bar U_i $ and $\widehat U_i $ ($i=1,2$). By Proposition~\ref{prop:regU},
\be 
\lVert  \bar U _i \rVert_{L^\infty(\bt^d)}  \leq C_{d}    \left (1 +  \lVert h_i \rVert_{L^{\frac{d+2}{d}}(\bt^d)} \right ), \qquad
\|\widehat U _i \|_{L^{\infty}(\bt^d)}  \leq \exp{ \left (C_{d}     \(1 +  \lVert h_i \rVert_{L^{\frac{d+2}{d}}(\bt^d)} \)  \right)}  .
\ee
Hence, by Lemma~\ref{lem:hatU-stab}, we obtain
\be 
\lVert \nabla \widehat{U}_1  - \nabla \widehat{U}_2  \rVert^2_{L^2(\bt^d)} \leq C  \lVert \bar{U}_1  - \bar{U}_2  \rVert^2_{L^2(\bt^d)} ,
\ee
with
\be
C  \leq \,\exp\,\exp   { \left [C_{d}     \left (1 +  \max_i\, \lVert h_i \rVert_{L^{\frac{d+2}{d}}(\bt^d)} \right )  \right]} .
\ee
The Poincar\'e inequality for zero mean functions implies that
\be 
\lVert \nabla \widehat{U}_1  - \nabla \widehat{U}_2  \rVert^2_{L^2(\bt^d)} \leq C  \lVert \bar{U}_1  - \bar{U}_2  \rVert^2_{L^2(\bt^d)}\leq C  \lVert \nabla \bar{U}_1  - \nabla \bar{U}_2  \rVert^2_{L^2(\bt^d)} .
\ee
Hence by \eqref{stab-Ubar},
\be 
\lVert \nabla \widehat{U}_1  - \nabla \widehat{U}_2  \rVert^2_{L^2(\bt^d)} \leq C  \max_i\, \lVert h_i \rVert_{L^{\infty}(\bt^d)} \, W^2_2(h_1, h_2) ,
\ee
where $C $ may be chosen to satisfy
\be
C  \leq \,\exp\,\exp   { \left (C_{d}     \left (1 +  \max_i\, \lVert h_i \rVert_{L^{\frac{d+2}{d}}(\bt^d)} \right )  \right)} 
\ee
for some suitably large $C_d$.

\end{proof}

\section{Uniqueness for Solutions With Bounded Density} \label{sec:stab}

This section focuses on the uniqueness part of Theorem~\ref{thm:main}. The aim is to prove the following theorem, concerning the uniqueness of solutions for the VPME system under the condition that the mass density $\rho_f$ is bounded in $L^\infty(\TT^d)$.

\begin{thm}[Uniqueness for solutions with bounded density]
\label{thm:unique}
Let $d = 2, 3$. Let ${f_0 \in \mc{P}(\bt^d \times \br^d)}$ with $\rho_{f_0} \in L^\infty(\bt^d)$. Fix a final time $T > 0$. Then there exists at most one solution ${f\in C([0,T] ; \mc{P}(\bt^d \times \br^d))}$ of \eqref{vpme} with initial datum $f_0$ such that $\rho_f \in L^{\infty}([0,T] ; L^\infty(\bt^d))$. 

\end{thm}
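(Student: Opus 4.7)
The plan is to adapt Loeper's strategy for the electron VP system, combined with the decomposition $E = \bar E + \widehat E$ and the toolbox from Section~\ref{sec:electric}. Let $f_1, f_2 \in C([0,T]; \mc{P}(\bt^d \times \br^d))$ be two solutions of \eqref{vpme} sharing the initial datum $f_0$, both with $\rho_{f_i} \in L^\infty([0,T]; L^\infty(\bt^d))$. Since $\rho_{f_i}$ is bounded, Lemma~\ref{lem:logLip} shows that $\bar E_i$ is log-Lipschitz in $x$, and Proposition~\ref{prop:regU} gives $\widehat E_i \in C^{0,1}$. The resulting total field $E_i$ is therefore Osgood-continuous in $x$, so the characteristic ODE $\dot X_i = V_i$, $\dot V_i = E_i(t, X_i)$ admits a unique flow $Z_i(t;x,v) = (X_i(t;x,v), V_i(t;x,v))$, and the continuity equation gives the Lagrangian representation $f_i(t) = (Z_i(t))_\# f_0$.

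Following Loeper, I would introduce
\begin{equation*}
Q(t) := \frac{1}{2} \int_{\bt^d \times \br^d} \Bigl( |X_1(t;x,v) - X_2(t;x,v)|^2 + |V_1(t;x,v) - V_2(t;x,v)|^2 \Bigr) \di f_0(x,v),
\end{equation*}
which satisfies $Q(0) = 0$; the aim is to show $Q \equiv 0$, which forces $f_1 \equiv f_2$. Differentiating and applying Cauchy-Schwarz gives
\begin{equation*}
\dot Q(t) \leq 2 Q(t) + \frac{1}{2} \int |E_1(t, X_1) - E_2(t, X_2)|^2 \di f_0,
\end{equation*}
so everything rests on controlling the field integral. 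I would split
\begin{equation*}
E_1(t, X_1) - E_2(t, X_2) = \bigl[E_1(t, X_1) - E_1(t, X_2)\bigr] + \bigl[E_1(t, X_2) - E_2(t, X_2)\bigr],
\end{equation*}
and further decompose each piece via $E_i = \bar E_i + \widehat E_i$.

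The Lipschitz part $\widehat E_1$ yields $\int |\widehat E_1(X_1) - \widehat E_1(X_2)|^2 \di f_0 \leq C Q$ by Proposition~\ref{prop:regU}. The singular part $\bar E_1$ is only log-Lipschitz via Lemma~\ref{lem:logLip}, which after integration and use of the concavity of $s \mapsto s(1 + |\log s|)^2$ for small $s$ together with Jensen's inequality produces an Osgood-type bound $\int |\bar E_1(X_1) - \bar E_1(X_2)|^2 \di f_0 \leq \phi(Q)$, with $\phi(s) \sim s(1+|\log s|)^2$ near zero. For the difference of two fields evaluated at the same point, note that $(X_2(t;\cdot))_\#(\int f_0\, dv) = \rho_{f_2}(t,\cdot)$, hence
\begin{equation*}
\int |E_1(t, X_2) - E_2(t, X_2)|^2 \di f_0 \leq \|\rho_{f_2}\|_{L^\infty} \bigl( \|\bar E_1 - \bar E_2\|_{L^2}^2 + \|\widehat E_1 - \widehat E_2\|_{L^2}^2 \bigr),
\end{equation*}
and Proposition~\ref{prop:Ustab} estimates the bracket by $C\, W_2^2(\rho_{f_1}(t), \rho_{f_2}(t))$ with a constant depending only on $\max_i \|\rho_{f_i}\|_{L^{(d+2)/d}} \leq \max_i \|\rho_{f_i}\|_{L^\infty}$. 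Since $(X_1(t;\cdot), X_2(t;\cdot))_\# (\int f_0 \,dv)$ is an admissible transport plan between $\rho_{f_1}(t)$ and $\rho_{f_2}(t)$, we get $W_2^2(\rho_{f_1}, \rho_{f_2}) \leq 2Q(t)$, so this contribution is also bounded by $CQ$.

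Putting the four estimates together yields
\begin{equation*}
\dot Q(t) \leq C_T \bigl( Q(t) + \phi(Q(t)) \bigr), \qquad Q(0) = 0,
\end{equation*}
with $C_T$ depending on $\max_i \|\rho_{f_i}\|_{L^\infty([0,T]\times\bt^d)}$. Since $\int_{0^+} \frac{\di s}{s + \phi(s)} = +\infty$, Osgood's lemma forces $Q \equiv 0$ on $[0,T]$, and hence $f_1 = f_2$. The main obstacle I expect is the treatment of the log-Lipschitz term: one must confirm the Jensen step is available (the logarithm is controlled since $|X_1 - X_2| \leq \sqrt{d}/2$ on the torus), and, crucially, must obtain the stability of $\widehat E$ in $L^2$ by a bound that is \emph{linear} in $W_2^2$ rather than a root of it, for otherwise the Osgood inequality would not close. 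This linearity is precisely what Proposition~\ref{prop:Ustab} is designed to deliver.
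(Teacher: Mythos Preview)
Your proposal is correct and follows essentially the same route as the paper: the paper proves a more general $W_2$-stability estimate (Proposition~\ref{prop:Wstab}) by pushing forward an arbitrary coupling $\pi_0$ along the two characteristic flows, but for the uniqueness statement itself this reduces to exactly your $Q(t)$ (the diagonal coupling of $f_0$ with itself). The four-term splitting, the use of Lemma~\ref{lem:logLip} together with Jensen for the log-Lipschitz piece, Proposition~\ref{prop:regU} for the Lipschitz control of $\widehat E_1$, and Proposition~\ref{prop:Ustab} for the $L^2$ stability of the fields evaluated at $X_2$ are precisely the ingredients of Lemmas~\ref{lem:I1}--\ref{lem:I4}.
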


The proof of this result is based on a stability estimate on solutions of the VPME system \eqref{vpme} with respect to the initial datum. 
This stability estimate is quantified using Wasserstein distances.

\subsection{Strong-strong stability}

In this section, we prove a quantitative stability estimate in $W_2$ between solutions with bounded \mbox{density}\footnote{Such estimates are said to be of \em{strong-strong}-type because we are requiring that both densities belong to $L^\infty$.}. To do this we will make use of the stability estimates for the electric field that we have proved in section~\ref{sec:electric}. Following the decomposition \eqref{electric-field}, it is useful to rewrite \eqref{vpme} in the form
$$
(VPME) := \left\{ \begin{array}{ccc}\pt_t f +v\cdot \nabla_x f + (\bar E +\widehat E )\cdot \nabla_v f =0,  \\
\bar E =-\nabla \bar U ,\qquad \widehat E =-\nabla \widehat U , \\
 \Delta\bar U =1-\rho_f ,\\
 \Delta\widehat U =e^{\bar U +\widehat U } - 1,\\
\rho_f  = \int_{\br^d} f  \di v \\
f_0 (x,v)\ge0,\ \  \int_{\bt^d \times \br^d} f_0 (x,v)\,dx\,dv=1.
\end{array} \right.
$$
The aim is to prove the following estimate between two solutions of VPME \eqref{vpme} that have bounded density.

\begin{prop}[Stability for solutions with bounded density] \label{prop:Wstab}
For $i = 1,2$, let $f _i$ be solutions of \eqref{vpme} satisfying for some constant $M$ and all $t \in [0,T]$,
\be \label{str-str_rho-hyp}
\rho[f _i(t)] \leq M .
\ee
Then there exists a constant $C $, depending on $M$, such that, for all $t \in [0,T]$,
\be
W_2\(f _1(t), f _2(t)\)^2 \leq \begin{cases}  \frac{d e}{4} \exp{\left ( \log{\frac{4 W_2\(f _1(0), f _2(0)\)^2 }{de}} \, e^{- C  t} \right ) }  & \mbox{if}\, \,t \leq t_0 \\
\max \left \{ \frac{d}{4} , W_2\(f _1(0), f _2(0)\)^2  \right \} e^{C (t-t_0)} &\mbox{if}\, \, t > t_0 ,
\end{cases}\ee
where the time $t_0$ is defined by
\be
t_0 = t_0\big(W_2\(f _1(0), f _2(0)\) \big) = \inf \left\{ t \geq 0 :  \frac{d e}{4} \exp{\left ( \log{\frac{4 W_2\(f _1(0), f _2(0)\)^2 }{de}} \, e^{- C  t} \right ) }  > \frac{d}{4} \right\} .
\ee

\end{prop}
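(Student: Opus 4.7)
The plan is to follow the coupling-plus-characteristics approach of Loeper~\cite{Loep}, controlling the Wasserstein distance via the evolution of a single functional $D(t)$ along the joint flow, and then deriving and integrating an Osgood-type differential inequality for $D$. Let $\pi_0 \in \Pi(f_1(0),f_2(0))$ be an optimal coupling for $W_2^2(f_1(0),f_2(0))$. Thanks to the hypothesis $\rho[f_i(t)] \leq M$ together with Lemma~\ref{lem:logLip} and Proposition~\ref{prop:hatU-e}, the fields $\bar E_i(t,\cdot)$ are log-Lipschitz and $\widehat E_i(t,\cdot)$ are Lipschitz, so the characteristic system of \eqref{vpme} for each $f_i$ admits a well-defined flow $Z_i(t,z)=(X_i(t,z),V_i(t,z))$. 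Pushing forward $\pi_0$ along $(Z_1,Z_2)$ gives a coupling of $f_1(t)$ and $f_2(t)$, hence
\be
W_2^2\bigl(f_1(t),f_2(t)\bigr)\;\leq\;D(t):=\int |X_1(t,z_1)-X_2(t,z_2)|^2+|V_1(t,z_1)-V_2(t,z_2)|^2\,d\pi_0(z_1,z_2).
\ee

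Differentiating $D$ along the flow and using $2ab\leq a^2+b^2$ on the $(X_1-X_2)\cdot(V_1-V_2)$ term yields
\be
\dot D(t)\;\leq\;D(t)\;+\;2\int |V_1-V_2|\;\bigl|E_1(t,X_1)-E_2(t,X_2)\bigr|\,d\pi_0.
\ee
I would decompose $E_i=\bar E_i+\widehat E_i$ and then split
\be
E_1(t,X_1)-E_2(t,X_2)\;=\;\bigl[\bar E_1(X_1)-\bar E_1(X_2)\bigr]+\bigl[\bar E_1(X_2)-\bar E_2(X_2)\bigr]+\bigl[\widehat E_1(X_1)-\widehat E_1(X_2)\bigr]+\bigl[\widehat E_1(X_2)-\widehat E_2(X_2)\bigr].
\ee
For the two \emph{stability} terms, Cauchy--Schwarz and the fact that the $X_2$-marginal of $(Z_1,Z_2)_\#\pi_0$ is exactly $\rho_{f_2(t)}$, together with $\|\rho_{f_2(t)}\|_{L^\infty}\leq M$, allow me to bound them by
\be
D(t)^{1/2}\,\bigl(M\,\|\bar E_1-\bar E_2\|_{L^2}^2+M\,\|\widehat E_1-\widehat E_2\|_{L^2}^2\bigr)^{1/2}.
\ee
Applying Proposition~\ref{prop:Ustab} with $h_i=\rho_{f_i(t)}$ (both bounded by $M$ in $L^\infty$ and hence in $L^{(d+2)/d}$), and using $W_2^2(\rho_{f_1(t)},\rho_{f_2(t)})\leq W_2^2(f_1(t),f_2(t))\leq D(t)$, this contribution is controlled by $C_M\,D(t)$.

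For the two \emph{regularity} terms, the Lipschitz bound on $\widehat E_1$ from Proposition~\ref{prop:hatU-e} immediately gives a contribution of order $D(t)$. The delicate piece is the log-Lipschitz term for $\bar E_1$. From Lemma~\ref{lem:logLip},
\be
\bigl|\bar E_1(X_1)-\bar E_1(X_2)\bigr|\;\leq\;C_dM\,|X_1-X_2|\Bigl(1+\log\tfrac{\sqrt d/2}{|X_1-X_2|}\Bigr),
\ee
and a Cauchy--Schwarz step reduces matters to estimating $\int |X_1-X_2|^2\bigl(1+\log(c/|X_1-X_2|)\bigr)^2 d\pi_0$. Here I would apply Jensen's inequality to the function $r\mapsto r\bigl(1+\tfrac12\log(c^2/r)\bigr)^2$, which a direct second-derivative computation shows to be concave on the relevant range $r\in(0,d/4]$ (where $d/4$ bounds the squared torus diameter). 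This yields
\be
\int |V_1-V_2|\,\bigl|\bar E_1(X_1)-\bar E_1(X_2)\bigr|\,d\pi_0\;\leq\;C_M\,D(t)\Bigl(1+\log_+\tfrac{c}{D(t)}\Bigr).
\ee

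Combining everything produces the Osgood-type differential inequality
\be
\dot D(t)\;\leq\;C\,D(t)\Bigl(1+\log_+\tfrac{de/4}{D(t)}\Bigr),
\ee
with $C=C(M,d)$. The final step is to integrate this. While $D(t)<d/4$, set $u(t):=\log\frac{de/4}{D(t)}\geq 1$; the inequality becomes $\dot u\geq -Cu$, giving $u(t)\geq u(0)e^{-Ct}$ and hence
\be
D(t)\;\leq\;\tfrac{de}{4}\exp\!\Bigl(\log\tfrac{4 D(0)}{de}\,e^{-Ct}\Bigr),
\ee
which is the first case of the stated bound (using $D(0)=W_2^2(f_1(0),f_2(0))$). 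Once $D(t)$ reaches $d/4$ at time $t_0$, the log factor is absorbed into the constant and $\dot D\leq C D$ gives the plain exponential bound in the second case. I expect the main obstacle to be the verification of concavity and the careful bookkeeping around the Jensen step so that the constants match the precise form $\tfrac{de}{4}$ appearing in the statement; once this is in place, the rest is a combination of the results already proved in Section~\ref{sec:electric} with standard ODE arguments.
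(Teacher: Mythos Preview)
Your proposal is correct and follows essentially the same approach as the paper: you build a coupling by pushing forward an initial coupling along the two characteristic flows, differentiate the resulting functional $D(t)$, split $E_1(X_1)-E_2(X_2)$ into the same four pieces (two regularity terms, two stability terms), control them respectively via Lemma~\ref{lem:logLip}, the $C^{2,\alpha}$ bound on $\widehat U$, and Proposition~\ref{prop:Ustab}, use Jensen with the concavity of $r\mapsto r(\log(c/r))^2$ on $[0,d/4]$, and then integrate the resulting Osgood inequality exactly as stated. The only cosmetic differences are that the paper takes an \emph{arbitrary} initial coupling and infimises at the end (you take an optimal one, which is fine since then $D(0)=W_2^2$), and that the Lipschitz bound on $\widehat E_1$ is more naturally cited from Proposition~\ref{prop:regU} (the $C^{2,\alpha}$ estimate on $\widehat U$) rather than Proposition~\ref{prop:hatU-e} directly.
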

Theorem~\ref{thm:unique} then follows from this estimate.

\begin{proof}
We will prove Proposition~\ref{prop:Wstab} by means of a Gronwall type estimate. To do this, we will first consider the evolution of particular specially constructed couplings $\pi_t \in \Pi(f_1(t), f_2(t))$. First, observe that $f_i $ can be represented as the pushforward of the initial datum $f_i (0)$ along the characteristic flow associated to \eqref{vpme}. That is, given $f_i $, consider for each $(x,v) \in \bt^d \times \br^d$ the system of ODEs
\begin{align} \label{eqn:char}
& \begin{cases} \dot X^{(i)}_{x,v} = V^{(i)}_{x,v} \\ 
\dot V^{(i)}_{x,v}  = E_i (X^{(i)}_{x,v})
\end{cases} \\
& (X^{(i)}_{x,v}(0), V^{(i)}_{x,v}(0)) = (x, v) .
\end{align}
where $E _i$ is the electric field induced by $f_i $:
\be
E _i=-\nabla U_i , \qquad \Delta U _i=e^{U _i}- \rho[f_i ] .
\ee
We again use the decomposition $E _i= \widehat E _i + \bar E _i$. Since $\rho[f_i ] \in L^\infty(\bt^d)$ by assumption \eqref{str-str_rho-hyp}, Lemma~\ref{lem:logLip} implies that $\bar E _i$ has log-Lipschitz regularity. Since $L^\infty(\bt^d) \subset L^{\frac{d+2}{d}}(\bt^d)$, we have $\rho[f_i ] \in L^\infty \cap L^{\frac{d+2}{d}}(\bt^d)$. Thus we may apply Proposition~\ref{prop:regU} to deduce Lipschitz regularity of $\widehat E _i$. Overall this implies that $E _i$ has log-Lipschitz regularity, which is sufficient to guarantee the existence of a unique solution to the system \eqref{eqn:char}. The uniqueness of the flow implies that the linear Vlasov equation
\be \label{eqn:vlas-lin}
\partial_t g + v \cdot \nabla_x g + E _i \cdot \nabla_v g = 0, \qquad
g\vert_{t=0} = f _i(0)
\ee
has a unique measure-valued solution $g$  (see for instance \cite[Theorem 3.1]{Amb}). This solution can be represented as the pushforward of the initial data along the characteristic flow, which means that $g_t$ satisfies
\be \label{f-rep}
\int_{\bt^d \times \br^d} \phi(x,v) g_t(\di x \di v) = \int_{\bt^d \times \br^d} \phi \left (X^{(i)}_{x,v}, V^{(i)}_{x,v} \right ) f_i(0, x,v) \di x \di v 
\ee
for all $\phi \in C_b(\bt^d \times \br^d)$. Since $f_i $ is also a solution of \eqref{eqn:vlas-lin}, and the solution is unique, it follows that $g = f_i $. We deduce that $f_i $ has the representation \eqref{f-rep}. Note that here we are not yet asserting any \textit{nonlinear} uniqueness, because we already fixed $E_i $ to be the electric field corresponding to $f_i $.

We use the representation above to construct $\pi_t$. First, fix an arbitrary initial coupling $\pi_0 \in \Pi \left (f_1 (0), f_2 (0) \right )$. We then build a coupling $\pi_t$ for which each marginal evolves along the appropriate characteristic flow. To be precise, we define $\pi_t$ to be the measure such that, for all $\phi \in C_b((\bt^d \times \br^d)^2)$,
\be \label{def:pit}
\int_{(\bt^d \times \br^d)^2} \phi(x,v,y,w) \di \pi_t(x,v,y,w) = \int_{\bt^d \times \br^d} \phi \left (X^{(1)}_{x,v}, V^{(1)}_{x,v}, X^{(2)}_{y,w}, V^{(2)}_{y,w} \right ) \di \pi_0(x,v,y,w) .
\ee
By checking the marginals:
\begin{align} \label{pit-marg}
\int_{(\bt^d \times \br^d)^2} \phi(x_{i},v_{i}) \di \pi_t(x_1,v_1,x_2,v_2) & = \int_{(\bt^d \times \br^d)^2} \phi \left (X^{(i)}_{x_i,v_i}, V^{(i)}_{x_i,v_i}\right ) \di \pi_0(x_1,v_1,x_2,v_2) \\
& = \int_{\bt^d \times \br^d} \phi \left (X^{(i)}_{x_i,v_i}, V^{(i)}_{x_i,v_i}\right ) f_i (0, x_i, v_i) \di x_i \di v_i \\
& = \int_{\bt^d \times \br^d} \phi(x,v) f_i (t, x, v) \di x \di v ,
\end{align}
we see that the representation \eqref{f-rep} implies that $\pi_t \in \Pi\left( f_1(t) , f_2(t) \right )$ for all $t \in [0,T]$.

\noindent We now consider the quantity
\be \label{def:D}
D(t) : = \int |X^{(1)}_t - X^{(2)}_t|^2 +  |V^{(1)}_t - V^{(2)}_t|^2 \di \pi_0 .
\ee
We have omitted the subscripts $x,v,y,w$ in order to lighten the notation. Since by definition \eqref{def:pit} we have
\be
D(t) = \int_{(\bt^d \times \br^d)^2} |x-y|^2 + |v-w|^2 \di \pi_t ,
\ee
it follows from Definition~\ref{def:Wass} that
\be \label{D-ctrl-W}
W_2^2\(f_1 (t), f_2 (t)\) \leq C D(t) .
\ee
Moreover, since $\pi_0$ was arbitrary, we have
\be \label{W0-ctrl-D0}
W_2^2\(f_1 (0), f_2 (0)\) = \inf_{\pi_0} D(0) .
\ee
We will therefore focus next on controlling the growth of $D(t)$. This amounts to performing a Gronwall estimate along the trajectories of the characteristic flow. We give the details in Lemma~\ref{lem:D-est} below. We obtain a bound
\be
D(t) \leq
\begin{cases}  \frac{d e}{4} \exp{\left ( \log{\frac{4 D(0)}{de}} \, e^{- C  t} \right ) }  & \mbox{if}\, \,t \leq t_0 \\
\max \left \{ \frac{d}{4} , D(0) \right \} e^{C (t-t_0)} &\mbox{if}\, \, t > t_0 ,
\end{cases}
\ee
where $t_0$ is defined by
\be \label{def:t0}
t_0 = t_0\big(D(0)\big) = \inf \left\{ t \geq 0 :  \frac{d e}{4} \exp{\left ( \log{\frac{4 D(0)}{de}} \, e^{- C  t} \right ) }   > \frac{d}{4} \right\} .
\ee
Observe that $t_0$ is decreasing as a function of $D(0)$.
From \eqref{D-ctrl-W} it follows that
\be
W_2^2\(f_1 (t), f_2 (t)\) \leq
\begin{cases}  \frac{d e}{4} \exp{\left ( \log{\frac{4 D(0)}{de}} \, e^{- C  t} \right ) }  & \mbox{if}\, \,t \leq t_0 \\
\max \left \{ \frac{d}{4} , D(0) \right \} e^{C (t-t_0)} &\mbox{if}\, \, t > t_0 ,
\end{cases}
\ee
Finally, taking infimum over $\pi_0$ and applying \eqref{W0-ctrl-D0} concludes the proof.
\end{proof}

\begin{lem}[Control of $D$] \label{lem:D-est} Let $D$ be defined by \eqref{def:D}. Then
\be
D(t) \leq
\begin{cases}  \frac{d e}{4} \exp{\left ( \log{\frac{4 D(0)}{de}} \, e^{- C  t} \right ) }  & \mbox{if}\, \,t \leq t_0 \\
\max \left \{ \frac{d}{4} , D(0) \right \} e^{C (t-t_0)} &\mbox{if}\, \, t > t_0 ,
\end{cases}
\ee
where $C $ depends on $M$ and $t_0$ is defined by \eqref{def:t0}. 
\end{lem}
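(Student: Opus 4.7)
The plan is to differentiate $D(t)$ along the characteristic flow and derive an Osgood-type differential inequality. Using the characteristic ODEs \eqref{eqn:char}, direct differentiation under the integral yields
$$
\dot D(t) = 2\int \bigl(X^{(1)}_t - X^{(2)}_t\bigr)\cdot\bigl(V^{(1)}_t - V^{(2)}_t\bigr)\,\di\pi_0 + 2\int \bigl(V^{(1)}_t - V^{(2)}_t\bigr)\cdot\bigl(E_1(X^{(1)}_t) - E_2(X^{(2)}_t)\bigr)\,\di\pi_0.
$$
The first integral is bounded by $D(t)$ via $2ab\le a^2+b^2$, and Cauchy--Schwarz in $L^2(\pi_0)$ bounds the second by $2\sqrt{D(t)}\,\mc{I}(t)^{1/2}$, where $\mc{I}(t):=\int|E_1(X^{(1)}_t)-E_2(X^{(2)}_t)|^2\di\pi_0$. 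The task thus reduces to estimating $\mc{I}(t)$ in terms of $D(t)$.

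To bound $\mc{I}(t)$ I decompose $E_i=\bar E_i+\widehat E_i$ and split
$$
E_1(X^{(1)})-E_2(X^{(2)}) = \bigl[\bar E_1(X^{(1)})-\bar E_1(X^{(2)})\bigr] + \bigl[\widehat E_1(X^{(1)})-\widehat E_1(X^{(2)})\bigr] + \bigl[\bar E_1(X^{(2)})-\bar E_2(X^{(2)})\bigr] + \bigl[\widehat E_1(X^{(2)})-\widehat E_2(X^{(2)})\bigr].
$$
Proposition~\ref{prop:regU} (via the $C^{2,\alpha}$ estimate) gives a Lipschitz bound on $\widehat E_1$, so that term contributes at most $C_M D(t)$. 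For the two stability pieces, I use that the $(X^{(1)}_t,X^{(2)}_t)$-pushforward of $\pi_0$ is a coupling of $\rho_1(t)$ and $\rho_2(t)$, so by Proposition~\ref{prop:Ustab},
$$
\int|\bar E_1(X^{(2)})-\bar E_2(X^{(2)})|^2\,\di\pi_0 = \int_{\bt^d}|\bar E_1-\bar E_2|^2\rho_2\,\di x \leq M\,\|\bar E_1-\bar E_2\|_{L^2}^2 \leq M^2\,W_2^2(\rho_1,\rho_2) \leq M^2\,D(t),
$$
and the $\widehat E$ stability piece is handled identically with a larger constant (depending on $\sup_i\|\rho_i\|_{L^{(d+2)/d}}$).

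The delicate term is the log-Lipschitz one. Lemma~\ref{lem:logLip} gives
$$
\int\bigl|\bar E_1(X^{(1)})-\bar E_1(X^{(2)})\bigr|^2\,\di\pi_0 \leq C_M \int r^2\Bigl(1+\log\tfrac{\sqrt{d}}{2r}\Bigr)^2\di\pi_0, \qquad r:=|X^{(1)}_t-X^{(2)}_t|.
$$
A direct computation of $\Phi''$ shows that $\Phi(s):=s\bigl(1+\tfrac12\log\tfrac{d}{4s}\bigr)^2$ is concave on $[0,d/4]$; since $r^2\le d/4$ (diameter of the torus), Jensen's inequality applied to the probability measure $\pi_0$ yields, whenever $D(t)\le d/4$,
$$
\int r^2\Bigl(1+\log\tfrac{\sqrt d}{2r}\Bigr)^2\di\pi_0 \leq \Phi(D(t)) = D(t)\Bigl(1+\tfrac12\log\tfrac{d}{4D(t)}\Bigr)^2.
$$
Combining the four contributions gives $\mc{I}(t)\le C_M D(t)\bigl(1+\log\tfrac{d}{4D(t)}\bigr)^2$, and hence the Osgood-type inequality
$$
\dot D(t) \leq C_M\, D(t)\Bigl(1+\log\tfrac{d}{4D(t)}\Bigr) \qquad\text{as long as } D(t)\le d/4,
$$
together with the linear bound $\dot D(t)\le C_M D(t)$ otherwise. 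Setting $z(t):=\log\tfrac{de}{4D(t)}$ turns the first inequality into $\dot z\ge -C_M z$; Gronwall gives $z(t)\ge z(0)e^{-C_M t}$, which rearranges exactly to the stated double-exponential bound. Beyond the time $t_0$ at which this bound first attains $d/4$, a standard linear Gronwall with initial value $\max\{d/4,D(0)\}$ produces the second branch.

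The main obstacle is the log-Lipschitz term: establishing concavity of $\Phi$ on the right interval and tracking constants so that the threshold $de/4$ in the upper bound matches the statement. A secondary point is justifying differentiation under the integral, which is legitimate because the log-Lipschitz regularity of $E_i$ yields a well-posed $C^1$ characteristic flow, as already used in the construction of $\pi_t$.
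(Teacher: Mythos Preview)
Your proposal is correct and follows essentially the same approach as the paper: the same four-term decomposition of $E_1(X^{(1)})-E_2(X^{(2)})$, the same use of Proposition~\ref{prop:regU} for the Lipschitz bound on $\widehat E_1$, Proposition~\ref{prop:Ustab} for the two stability terms, the log-Lipschitz estimate plus Jensen for the remaining term, and the same substitution to integrate the Osgood inequality. The only minor difference is that the paper extends the concave function by a constant beyond $d/4$ (its function $H$) so that the Jensen bound is stated uniformly, whereas you treat the regime $D>d/4$ separately; your linear bound there is justified since $\Phi$ attains its maximum $d/4$ on $[0,d/4]$, which you might make explicit.
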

\begin{proof}

Differentiating with respect to $t$ gives
\be \label{D-derivative}
\dot D(t) = 2 \int_{(\bt^d \times \br^d)^2} (X^{(1)}_t - X^{(2)}_t) \cdot (V^{(1)}_t - V^{(2)}_t ) + (V^{(1)}_t - V^{(2)}_t ) \cdot \[ E _1(X^{(1)}_t) - E_2 (X^{(2)}_t) \] \di \pi_0 
\ee
We split the electric field into four parts:
\begin{align}
E _1(X^{(1)}_t) - E_2 (X^{(2)}_t) & = \[ \bar E _1(X^{(1)}_t) - \bar E_1 (X^{(2)}_t) \] + \[ \bar E _1(X^{(2)}_t) - \bar E_2 (X^{(2)}_t) \]  \\
& \qquad + \[ \widehat E _1(X^{(1)}_t) - \widehat E_1 (X^{(2)}_t) \] + \[ \widehat E _1(X^{(2)}_t) - \widehat E_2 (X^{(2)}_t)\] .
\end{align}
Applying H\"older's inequality to \eqref{D-derivative}, we obtain
\be
\dot D \leq D + 2 \sqrt{D} \sum_{i=1}^4 I_i^{1/2} ,
\ee
where
\begin{align} \label{def:Ii}
\begin{split}
&I_1 := \int_{(\bt^d \times \br^d)^2} |\bar E _1(X^{(1)}_t) - \bar E_1 (X^{(2)}_t)|^2 \di \pi_0, \quad I_2 := \int_{(\bt^d \times \br^d)^2} |\bar E _1(X^{(2)}_t) - \bar E_2 (X^{(2)}_t)|^2 \di \pi_0; \\
&I_3:= \int_{(\bt^d \times \br^d)^2} |\widehat E _1(X^{(1)}_t) - \widehat E_1 (X^{(2)}_t)|^2 \di \pi_0, \quad I_4 := \int_{(\bt^d \times \br^d)^2} |\widehat E _1(X^{(2)}_t) - \widehat E_2 (X^{(2)}_t)|^2 \di \pi_0.
\end{split}
\end{align}
We estimate the above terms in Lemmas~\ref{lem:I1}-\ref{lem:I4} below. Altogether we obtain
\be
\dot D \leq \begin{cases} C  D \left (1 + \lvert \log{\frac{4 D}{ d}} \rvert \right ) & \text{ if } D < \frac{d}{4} \\
 C  D & \text{ if } D \geq \frac{d}{4} .
\end{cases}
\ee
Therefore
\be
D(t) \leq  \frac{d e}{4} \exp{\left ( \log{\frac{4 D(0)}{de}} \, e^{- C  t} \right ) } 
\ee
as long as $D(t) \leq \frac{d}{4}$, which certainly holds as long as $t < t_0$. For $t > t_0$ we have the alternative bound
$$
D(t) \leq \max \left \{ \frac{d}{4} , D(0) \right \} \,e^{C (t- t_0)} .
$$
\end{proof}

\begin{lem}[Control of $I_1$] \label{lem:I1}
Let $I_1$ be defined as in \eqref{def:Ii}. Then
\be
I_1 \leq C_d (M+1)^2 H(D),
\ee
where $D$ is defined as in \eqref{def:D} and
$$
H(x) := \begin{cases}
x \left (\log{\frac{4x}{e^2 d}} \right )^2 &\text{ if } x \leq \frac{d}{4} \\
d &\text{ if } x > \frac{d}{4}.
\end{cases}
$$
\end{lem}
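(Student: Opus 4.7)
The plan is to combine the log-Lipschitz regularity of $\bar E_1$ from Lemma~\ref{lem:logLip} with a Jensen-type argument, exploiting that $\pi_0$ is a probability measure. Since $\rho[f_1(t)] \in L^\infty$ with $\|\rho[f_1(t)]\|_\infty \leq M$, Lemma~\ref{lem:logLip} applied to $\bar U_1$ (whose Laplacian is $1 - \rho[f_1] \in L^\infty$ with norm controlled by $1 + M$) gives, for any $x,y \in \bt^d$,
\[
|\bar E_1(x) - \bar E_1(y)|^2 \leq C_d (1 + M)^2\, |x-y|^2 \Bigl(1 + \log\tfrac{\sqrt{d}}{2|x-y|}\Bigr)^2.
\]
Setting $r = |X^{(1)}_t - X^{(2)}_t|^2$, which satisfies $r \leq d/4$ since the torus has diameter $\sqrt d/2$, this yields
\[
I_1 \leq C_d (1+M)^2 \int_{(\bt^d \times \br^d)^2} g\bigl(|X^{(1)}_t - X^{(2)}_t|^2\bigr) \,\di \pi_0,
\qquad g(r) := r \Bigl(1 + \tfrac{1}{2}\log\tfrac{d}{4r}\Bigr)^2.
\]

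The first main step is to show that $g$ is concave and increasing on $[0, d/4]$, so that Jensen's inequality can be applied against the probability measure $\pi_0$. A direct computation with $h(r) := 1 + \tfrac{1}{2}\log(d/(4r))$ gives $g'(r) = h(h-1)$ and $g''(r) = -(2h-1)/(2r)$. For $r \in (0, d/4]$ one has $h \geq 1$, so $g' \geq 0$ and $g'' < 0$, which is exactly the monotonicity and concavity needed. Jensen then yields
\[
\int g(|X^{(1)}_t - X^{(2)}_t|^2) \,\di \pi_0 \leq g\!\left(\int |X^{(1)}_t - X^{(2)}_t|^2 \,\di \pi_0\right) \leq g(D(t)),
\]
where the last inequality uses monotonicity of $g$ together with $\int |X^{(1)}_t - X^{(2)}_t|^2 \di \pi_0 \leq D(t)$ from the definition \eqref{def:D}.

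The second step is to match the form of $g(D)$ with the function $H$ in the statement, valid when $D \leq d/4$. Writing $L = \log(d/(4D))$, one checks $g(D) = D(1+L/2)^2 = \tfrac{1}{4} D(2+L)^2 = \tfrac{1}{4} H(D)$, so on this range $I_1 \leq C_d(1+M)^2 H(D)$. For the complementary range $D > d/4$ the log-Lipschitz bound is no longer useful, and I would instead use the uniform $L^\infty$ bound on $\bar E_1$: by Lemma~\ref{Ubar-reg}(ii) applied to $1 - \rho[f_1]$ we have $\|\bar E_1\|_{L^\infty} \leq C_d(1+M)$, so $I_1 \leq 4\|\bar E_1\|_\infty^2 \leq C_d(1+M)^2 = C_d(1+M)^2 \cdot H(D)/d$, giving the claim on this range as well.

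The only delicate point is the verification that $g$ is concave on the full interval $[0, d/4]$, since this is what allows Jensen to be applied against an arbitrary probability measure $\pi_0$ with no loss. Everything else reduces to a direct computation and the elementary $L^\infty$ bound on $\bar E_1$.
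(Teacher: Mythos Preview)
Your proof is correct and follows essentially the same route as the paper: apply the log-Lipschitz bound from Lemma~\ref{lem:logLip}, observe that the resulting integrand is a concave function of $|X^{(1)}_t - X^{(2)}_t|^2$, and use Jensen's inequality against the probability measure $\pi_0$. The only cosmetic difference is that the paper extends the function $a(x) = x\bigl(\log\tfrac{4x}{e^2 d}\bigr)^2 = 4g(x)$ by the constant value $d$ for $x > d/4$, noting that $a'(d/4)=0$ so that the extension $H$ is concave and non-decreasing on all of $\RR^+$; this makes the case split unnecessary, since Jensen applied to $H$ directly gives $I_1 \leq C_d(M+1)^2 H(D)$ regardless of the size of $D$. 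Your separate treatment of $D > d/4$ via the uniform $L^\infty$ bound on $\bar E_1$ is valid but not needed: the same Jensen argument already yields $\int g(|X^{(1)}_t - X^{(2)}_t|^2)\,\di\pi_0 \leq g(d/4) = d/4 = H(D)/4$ in that range.
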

\begin{proof}
First we use the regularity estimate for $\bar E_1 $ from Lemma~\ref{lem:logLip}:
\begin{align}
I_1 & \leq  C_d \lVert \rho_1  - 1 \rVert_{L^{\infty}(\bt^d)}^2 \int_{(\bt^d \times \br^d)^2} |X^{(1)}_t - X^{(2)}_t|^2 \left ( \log{\frac{e \sqrt{d}}{2|X^{(1)}_t - X^{(2)}_t|}} \right )^2 \di \pi_0 \\
& = C_d   \lVert \rho_1  - 1 \rVert_{L^{\infty}(\bt^d)}^2 \int_{(\bt^d \times \br^d)^2} |X^{(1)}_t - X^{(2)}_t|^2 \left ( \log{\frac{4 |X^{(1)}_t - X^{(2)}_t|^2}{e^2 d}} \right )^2 \di \pi_0 .
\end{align}
The function 
$$a(x) = x \left (\log{\frac{4 x}{e^2 d }} \right )^2$$
 is concave on the set $x \in [0, \frac{d e}{4}]$. Since $X^{(i)}_t \in \bt^d$, we have $|X^{(1)}_t - X^{(2)}_t|^2 \leq \frac{d}{4}$. Note that
$$
a ' \left (\frac{d}{4} \right) = - \log{e^2} (2 - \log{e^2}) = 0 ;
$$
hence the function $H(x)$ defined in the statement is concave on $\mathbb R^+$, and
\be
I_1 \leq C_d   \lVert \rho_1  - 1 \rVert_{L^{\infty}(\bt^d)}^2 \int_{(\bt^d \times \br^d)^2} H\,( |X^{(1)}_t - X^{(2)}_t|^2)  \di \pi_0 .
\ee
Then, since $\pi_0$ is a probability measure, we may apply Jensen's inequality to deduce that
\be
I_1  \leq C_d  \lVert \rho_1  - 1 \rVert_{L^{\infty}(\bt^d)}^2 \, \, H\left( \int_{(\bt^d \times \br^d)^2} |X^{(1)}_t - X^{(2)}_t|^2 \di \pi_0 \right) \leq C_d  \lVert \rho_1  - 1 \rVert_{L^{\infty}(\bt^d)}^2 \, \, H(D).
\ee
\end{proof}

\begin{lem}[Control of $I_2$] \label{lem:I2}
Let $I_2$ be defined as in \eqref{def:Ii}. Then
\be
I_2 \leq  M^2 D ,
\ee
where $D$ is defined as in \eqref{def:D}.
\end{lem}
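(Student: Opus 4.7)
The integrand in $I_2$ depends only on the position $X^{(2)}_t$, so by the marginal computation \eqref{pit-marg}, the $\pi_0$-distribution of $X^{(2)}_t$ is exactly $\rho[f_2(t)] = \rho_2(t)$. Thus the plan is to first rewrite
\begin{equation}
I_2 = \int_{\bt^d} |\bar E_1(t,x) - \bar E_2(t,x)|^2\, \rho_2(t,x)\, \di x.
\end{equation}
Using the pointwise bound $\rho_2(t) \leq M$ from \eqref{str-str_rho-hyp}, this immediately yields
\begin{equation}
I_2 \leq M\, \lVert \bar E_1(t) - \bar E_2(t) \rVert_{L^2(\bt^d)}^2.
\end{equation}

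Next, I would invoke the Poisson stability estimate (Lemma~\ref{lem:Loep}, or equivalently \eqref{stab-Ubar} in Proposition~\ref{prop:Ustab}) applied to $h_i = \rho_i(t)$, which gives
\begin{equation}
\lVert \bar E_1(t) - \bar E_2(t) \rVert_{L^2(\bt^d)}^2 = \lVert \nabla \bar U_1(t) - \nabla \bar U_2(t) \rVert_{L^2(\bt^d)}^2 \leq \max_i \lVert \rho_i(t) \rVert_{L^\infty(\bt^d)}\, W_2^2(\rho_1(t), \rho_2(t)) \leq M\, W_2^2(\rho_1(t), \rho_2(t)).
\end{equation}
Combining the two previous displays yields $I_2 \leq M^2\, W_2^2(\rho_1(t), \rho_2(t))$.

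Finally, I would control the Wasserstein distance between the densities by $D(t)$. The coupling $\pi_t$ from \eqref{def:pit} has position marginals $\rho_1(t)$ and $\rho_2(t)$, so pushing forward through the position projection produces an admissible coupling of $(\rho_1(t), \rho_2(t))$; consequently
\begin{equation}
W_2^2(\rho_1(t), \rho_2(t)) \leq \int_{(\bt^d \times \br^d)^2} |x-y|^2\, \di \pi_t(x,v,y,w) = \int |X^{(1)}_t - X^{(2)}_t|^2\, \di \pi_0 \leq D(t).
\end{equation}
Chaining the inequalities gives $I_2 \leq M^2 D$, as claimed. The proof is essentially a direct application of the stability estimate of Section~\ref{sec:electric} together with the coupling structure already set up in the proof of Proposition~\ref{prop:Wstab}, so there is no real obstacle; the only point demanding care is the identification of the marginal of $\pi_t$ with $\rho_2(t)$, which has however been established in \eqref{pit-marg}.
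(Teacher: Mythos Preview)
Your proof is correct and follows essentially the same approach as the paper: rewrite $I_2$ against $\rho_2$ via the marginal identity, pull out $\|\rho_2\|_{L^\infty}\leq M$, apply Lemma~\ref{lem:Loep} to bound $\|\bar E_1-\bar E_2\|_{L^2}^2$ by $M\,W_2^2(\rho_1,\rho_2)$, and then note $W_2^2(\rho_1,\rho_2)\leq D$ via the coupling $\pi_t$. If anything, you spell out the last step (pushing $\pi_t$ forward to a coupling of the densities) more explicitly than the paper does.
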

\begin{proof}
From \eqref{pit-marg}, for all $\phi \in C(\bt^d)$ we have
\be
\int_{(\bt^d \times \br^d)^2} \phi(X^{(i)}_t) \di \pi_0 = \int_{\bt^d \times \br^d} \phi(x) f_i (t, x,v) \di x \di v \label{pushforward2}= \int_{\bt^d} \phi(x) \rho_i (t, x) \di x .
\ee
Thus
\begin{align}
I_2 & = \int_{\bt^d} |\bar E _1(x) - \bar E_2 (x)|^2 \rho_2 (t,x) \di x \leq \lVert \rho_2  \rVert_{L^{\infty}(\bt^d)} \lVert \bar E _1 - \bar E_2  \rVert_{L^2(\bt^d)}^2 = \lVert \rho_2  \rVert_{L^{\infty}(\bt^d)} \lVert \nabla \bar U _1 - \nabla \bar U_2  \rVert_{L^2(\bt^d)}^2.
\end{align}
We use the stability estimate from Lemma~\ref{lem:Loep} to control the difference between different electric fields:
\be
I_2 \leq   \max_i\, \lVert \rho _i \rVert_{L^{\infty}(\bt^d)}^2 \, W_2^2(\rho _1, \rho _2) \leq   \max_i\, \lVert \rho _i \rVert_{L^{\infty}(\bt^d)}^2 \, D .
\ee

\end{proof}

\begin{lem}[Control of $I_3$] \label{lem:I3}
Let $I_3$ be defined as in \eqref{def:Ii}. Then
\be
I_3 \leq C_{M,d}  \, D ,
\ee
where $D$ is defined as in \eqref{def:D} and $C_{M,d}$ depends on $M$ and $d$.
\end{lem}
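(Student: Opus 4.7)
The plan is to reduce this lemma to the Lipschitz regularity of $\widehat E_1=-\nabla \widehat U_1$, which has already been established in Section~\ref{sec:electric}. The structural observation is that $I_3$ involves the \emph{same} field $\widehat E_1$ evaluated at two different positions $X^{(1)}_t$ and $X^{(2)}_t$, so unlike $I_4$ (which will compare two \emph{different} fields at the same position and hence require the stability estimate \eqref{stab-Uhat}), no stability statement is needed here: a pointwise regularity bound for $\widehat E_1$ alone is sufficient.

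The first step is to extract the regularity from the hypothesis. From \eqref{str-str_rho-hyp} together with the fact that $\rho_1(t)$ is a probability density, the elementary interpolation
$$\|\rho_1(t)\|_{L^{(d+2)/d}(\bt^d)}^{(d+2)/d}\;\le\;\|\rho_1(t)\|_{L^\infty(\bt^d)}^{2/d}\,\|\rho_1(t)\|_{L^1(\bt^d)}\;\le\;M^{2/d}$$
controls $\rho_1$ in exactly the norm required by Proposition~\ref{prop:regU}. Feeding this bound into the $C^{1,\alpha}$ estimate of that proposition yields $\|\widehat U_1\|_{C^{1,\alpha}(\bt^d)}\le C_{M,d}$; in particular $\widehat E_1$ is Lipschitz with a constant $L_{M,d}$ depending only on $M$ and $d$.

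The final step is essentially automatic. The pointwise bound $|\widehat E_1(X^{(1)}_t)-\widehat E_1(X^{(2)}_t)|\le L_{M,d}\,|X^{(1)}_t-X^{(2)}_t|$, squared and integrated against $\pi_0$, gives
$$I_3\;\le\;L_{M,d}^{\,2}\int_{(\bt^d\times\br^d)^2}|X^{(1)}_t-X^{(2)}_t|^2\,\di\pi_0\;\le\;L_{M,d}^{\,2}\,D,$$
by the definition \eqref{def:D} of $D$, which is the claim with $C_{M,d}:=L_{M,d}^{\,2}$. There is no genuine obstacle; the entire substance of the lemma was pre-packaged in the choice of $L^{(d+2)/d}$ as the reference norm in Section~\ref{sec:electric}, since that is precisely the norm of $\rho_1$ that can be bounded by a function of $M$ alone. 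In contrast, bounding $\widehat E$ by its $C^{1,\alpha}$ norm via the $L^\infty$ estimate of Proposition~\ref{prop:regU} would yield a worse constant, but this is unnecessary for the present lemma.
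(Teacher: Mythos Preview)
There is a genuine gap in your regularity count. You invoke the $C^{1,\alpha}$ estimate for $\widehat U_1$ from Proposition~\ref{prop:regU} and then claim that ``in particular $\widehat E_1$ is Lipschitz''. But $\widehat E_1 = -\nabla \widehat U_1$, so a bound on $\|\widehat U_1\|_{C^{1,\alpha}}$ only controls $\|\widehat E_1\|_{C^{0,\alpha}}$: the field is merely $\alpha$-H\"older, not Lipschitz. With H\"older regularity the pointwise bound becomes $|\widehat E_1(X^{(1)}_t)-\widehat E_1(X^{(2)}_t)|\le C\,|X^{(1)}_t-X^{(2)}_t|^{\alpha}$, which after squaring and integrating gives at best $I_3\le C\,D^{\alpha}$ (via Jensen), not $I_3\le C\,D$. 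This would feed a term of order $D^{(1+\alpha)/2}$ with exponent strictly less than $1$ into the differential inequality for $D$, and such a term destroys the Osgood-type uniqueness argument (the ODE $\dot D = D^{\gamma}$ with $\gamma<1$ admits nontrivial solutions from $D(0)=0$).

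The fix is simply to use the \emph{next} estimate in Proposition~\ref{prop:regU}, namely the $C^{2,\alpha}$ bound on $\widehat U_1$, which also depends only on $\|\rho_1\|_{L^{(d+2)/d}}$ and hence only on $M$ and $d$. This gives $\widehat E_1\in C^{1,\alpha}$, which is genuinely Lipschitz, and your final computation then goes through verbatim. This is exactly what the paper does. Your closing remark about an alternative ``$L^\infty$ estimate'' is also based on a misreading: both the $C^{1,\alpha}$ and the $C^{2,\alpha}$ estimates for $\widehat U$ in Proposition~\ref{prop:regU} require only $\|h\|_{L^{(d+2)/d}}$; the $L^\infty$ hypothesis appears only in the estimate for $\bar U$.
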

\begin{proof}
Observe that
\begin{align}
I_3 & = \int_{(\bt^d \times \br^d)^2} |\widehat E _1(X^{(1)}_t) - \widehat E_1 (X^{(2)}_t)|^2 \di \pi_0 \\
& \leq \int_{(\bt^d \times \br^d)^2} \lVert \widehat E _1 \rVert_{C^{1}(\bt^d)}^2 |X^{(1)}_t - X^{(2)}_t|^2 \di \pi_0 \leq \lVert \widehat U _1\rVert_{C^{2,\alpha}(\bt^d)}^2 D 
\end{align}
for any $\alpha>0$. To this we apply the regularity estimate on $\widehat U_1$ from Proposition~\ref{prop:regU}
with  $\alpha \in A_d$:
\be
\|\widehat U_1  \|_{C^{2,\alpha}(\bt^d)} \leq C_{\alpha,d} \,\exp\,\exp  {\(C_{\alpha, d}     \(1 +  \lVert \rho_1  \rVert_{L^{\frac{d+2}{d}}(\bt^d)}\) \)} \leq C_{\alpha, M ,d} ,
\ee
since
\be
\| \rho_i \|_{L^{\frac{d+2}{d}}(\bt^d)} \leq \| \rho_i \|_{L^\infty(\bt^d)} \leq M .
\ee
Thus we have
\be
I_3 \leq C_{M,d} \, D .
\ee

\end{proof}

\begin{lem}[Control of $I_4$] \label{lem:I4}
Let $I_4$ be defined as in \eqref{def:Ii}. Then
\be
I_4  \leq C_{M,d} \, D,
\ee
where $D$ is defined as in \eqref{def:D} and $C_{M,d}$ depends on $M$ and $d$.
\end{lem}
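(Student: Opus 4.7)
The plan is to handle $I_4$ by the same recipe that controls $I_2$ in Lemma~\ref{lem:I2}, but invoking the nonlinear stability estimate \eqref{stab-Uhat} from Proposition~\ref{prop:Ustab} in place of the linear Loeper-type bound of Lemma~\ref{lem:Loep}. The only difference from $I_2$ is that the two electric fields being compared are $\widehat E _1$ and $\widehat E _2$ rather than $\bar E _1$ and $\bar E _2$, so all the structural steps (marginal change of variables, $L^\infty$--$L^2$ splitting via H\"older, elliptic stability, bound by $D$) carry over verbatim.

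Concretely, I would first use the marginal identity \eqref{pushforward2} with $i=2$ to rewrite
$$
I_4 = \int_{\bt^d} |\widehat E _1(x) - \widehat E _2(x)|^2 \rho_2 (t,x)\,\di x \leq \lVert \rho_2 (t) \rVert_{L^{\infty}(\bt^d)} \, \lVert \nabla \widehat U _1 - \nabla \widehat U _2 \rVert_{L^2(\bt^d)}^2,
$$
where I used $\widehat E _i = -\nabla \widehat U _i$. Next, since \eqref{str-str_rho-hyp} gives $\lVert \rho_i(t) \rVert_{L^\infty(\bt^d)} \leq M$, which on the torus also yields $\lVert \rho_i(t) \rVert_{L^{(d+2)/d}(\bt^d)} \leq M$, I apply \eqref{stab-Uhat} of Proposition~\ref{prop:Ustab} with $h_i = \rho_i(t)$ to get
$$
\lVert \nabla \widehat U _1 - \nabla \widehat U _2 \rVert_{L^2(\bt^d)}^2 \leq \exp\,\exp\bigl[ C_d (1 + M) \bigr] \, M \, W_2^2(\rho_1 (t), \rho_2 (t)) = C_{M,d} \, W_2^2(\rho_1 (t), \rho_2 (t)).
$$
Finally, as in the treatment of $I_2$, the projection of $\pi_t$ to the spatial marginals provides a coupling of $\rho_1 (t)$ and $\rho_2 (t)$, so by Definition~\ref{def:Wass}
$$
W_2^2(\rho_1 (t), \rho_2 (t)) \leq \int_{(\bt^d \times \br^d)^2} |X^{(1)}_t - X^{(2)}_t|^2 \,\di \pi_0 \leq D(t),
$$
and combining the three displays yields $I_4 \leq C_{M,d} \, D$ as claimed.

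Essentially all the substantive work has already been done in Section~\ref{sec:electric}: the double-exponential bound for $\lVert \nabla \widehat U _1 - \nabla \widehat U _2 \rVert_{L^2}$ in terms of $W_2(\rho_1, \rho_2)$, which is what absorbs the nonlinearity $e^U$ via the energy argument of Lemma~\ref{lem:hatU-stab}. For this reason I do not foresee any obstacle here; the lemma is purely an assembly of previously established ingredients, and unlike $I_1$ the spatial singularity does not re-enter because $\widehat U _1$ is a $C^{2,\alpha}$ function and the difference $\widehat U _1 - \widehat U _2$ is controlled in $H^1$ by a clean Wasserstein estimate.
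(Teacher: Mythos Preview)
Your proof is correct and follows essentially the same approach as the paper: rewrite $I_4$ via the marginal identity \eqref{pushforward2}, apply H\"older to pull out $\|\rho_2\|_{L^\infty}$, invoke the stability estimate \eqref{stab-Uhat} of Proposition~\ref{prop:Ustab}, and bound $W_2^2(\rho_1,\rho_2)$ by $D$ using the spatial marginals of $\pi_t$.
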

\begin{proof}
Using \eqref{pushforward2} again, we deduce that
\begin{align}
I_4 &  = \int_{\bt^d} |\widehat E _1(x) - \widehat E_2 (x)|^2 \rho _2(t,x) \di x \\
& \leq \lVert \rho _2 \rVert_{L^{\infty}(\bt^d)} \lVert \widehat E _1 - \widehat E _2 \rVert^2_{L^2(\bt^d)}
 = \lVert \rho _2 \rVert_{L^{\infty}(\bt^d)} \lVert \nabla \widehat U _1 - \nabla \widehat{U} _2 \rVert^2_{L^2(\bt^d)} .
\end{align}
To control the $L^2(\bt^d)$ distance between the electric fields we use the stability estimate in Proposition~\ref{prop:Ustab}:
\be
\lVert \nabla \widehat{U}_1  - \nabla \widehat{U}_2  \rVert^2_{L^2(\bt^d)} \leq \exp\,\exp {\left [ C_d    \left ( 1 + \max_i\, \lVert \rho _i \rVert_{L^{(d+2)/d}(\bt^d)}  \right ) \right ]} \max_i\, \lVert \rho _i \rVert_{L^{\infty}} \, W_2^2(\rho _1, \rho _2) .
\ee
Therefore
\begin{align}
I_4 & \leq \exp\,\exp {\left [ C_d    \left ( 1 + \max_i\, \lVert \rho _i  \rVert_{L^{(d+2)/d}(\bt^d)}  \right ) \right ]} \max_i\, \lVert \rho _i \rVert^2_{L^{\infty}} \, W_2^2(\rho _1, \rho _2) \\
& \leq \exp\,\exp {\left [ C_d    \left ( 1 + \max_i\, \lVert \rho _i \rVert_{L^{(d+2)/d}(\bt^d)}  \right ) \right ]} \max_i\, \lVert \rho _i \rVert^2_{L^{\infty}} \, D \leq C_{M,d} \,D .
\end{align}

\end{proof}

\section{Propagation of Moments} \label{sec:moments}

In this section we prove an a priori estimate on classical solutions of the VPME system, showing that velocity moments of sufficiently high order are propagated. At this stage, the reader may look at all the computations in this section as a priori estimates for classical solutions that decay fast enough at infinity. More precisely, we shall prove uniform moment propagation estimates for $C^1$ compactly supported solutions, with bounds that are independent of the smoothness of $f$ and of the fact that $f$ has compact support.

Then, in Section~\ref{sec:construction}, we will perform the same estimates on a family of solutions of regularised  VPME systems where all the computations will be rigorous. Of course one could have performed these estimates directly on the regularised systems. However this choice simplifies the notation and highlights the main ideas.

Note that a posteriori, as a consequence of our main Theorem \ref{thm:main} and Remark \ref{rem:C1}, $C^1$ compactly supported solutions of 
VPME system \eqref{vpme} exist whenever the initial datum is $C^1$ and compactly supported.

\begin{prop} \label{prop:moment-propagation}
Let the dimension $d=2$ or $d=3$. Let $0 \leq f_0 \in L^1 \cap L^\infty(\TT^d \times \RR^d)$ have a finite energy and finite velocity moment of order $m_0 > d$:
\be
\mc{E} [f] \leq C_0<+\infty,\qquad \int_{\TT^d \times \RR^d} |v|^{m_0} f_0(x,v) \di x \di v =M_0< +\infty.
\ee
Let $f$ be a $C^1$ compactly supported solution of the VPME system \eqref{vpme}.
Then, for all $T > 0$,
\be
\sup_{[0,T]} \int_{\TT^d \times \RR^d} |v|^{m_0} f(t,x,v) \di x \di v \le C(T, C_0,M_0,m_0,\|f_0\|_\infty).
\ee
\end{prop}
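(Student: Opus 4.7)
The plan is to run a direct Gronwall argument on the velocity moment
$$
M_k(t) := \int_{\TT^d \times \RR^d} |v|^{k}\, f(t,x,v)\,dx\,dv, \qquad k = m_0,
$$
following the strategy of Chen--Chen for electron Vlasov--Poisson, adapted to the torus and to VPME. Since $f$ is $C^1$ and compactly supported, differentiation in time and integration by parts using the Vlasov equation are rigorously justified and give
$$
\dot M_{m_0}(t) \;=\; m_0\!\int_{\TT^d \times \RR^d} |v|^{m_0-2}(v\cdot E)\,f\,dx\,dv \;\leq\; m_0\!\int_{\TT^d \times \RR^d} |v|^{m_0-1}|E|\,f\,dx\,dv.
$$
The key structural step is then to decompose $E = \bar E + \widehat E$ as in Section~\ref{sec:electric} and estimate the two pieces separately, exploiting that $\widehat E$ inherits strong regularity while $\bar E$ behaves exactly as the singular electron field.

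For the $\widehat E$ contribution, I would first observe that conservation of the energy $\mathcal E[f]$, combined with the standard velocity interpolation recalled in Lemma~\ref{lem:rho-Lp} (writing $\rho_f(x)\le C\|f\|_\infty^{2/(d+2)}\bigl(\int |v|^2 f\,dv\bigr)^{d/(d+2)}$ pointwise and integrating), yields a uniform-in-time bound on $\|\rho_f(t)\|_{L^{(d+2)/d}(\TT^d)}$ depending only on $\mathcal E[f_0]$ and $\|f_0\|_\infty$. Proposition~\ref{prop:regU} then delivers a time-uniform $L^\infty$ bound on $\widehat E$, so that
$$
m_0\!\int |v|^{m_0-1}|\widehat E|\,f\,dx\,dv \;\leq\; C\,M_{m_0-1}(t) \;\leq\; C\,(1+M_{m_0}(t)),
$$
where the last step is a standard moment interpolation using total mass conservation. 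Thus the nonlinear VPME correction contributes at most a linear term to the Gronwall inequality.

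The real work lies in the $\bar E$ term, for which I would adapt the Chen--Chen moment propagation argument. Writing $\bar E = K*(\rho_f-1)$ with $K=-\nabla G$ having the Coulomb singularity described in Lemma~\ref{lem:G}, I would split the spatial convolution into a singular near-field and a smooth far-field, and split the velocity integration into low velocities $|v|\leq R(t,x)$ and high velocities, with $R$ to be optimized. The inner spatial integral is controlled by a combination of Calder\'on--Zygmund/Hardy--Littlewood--Sobolev estimates for $\bar E$ and the Lions--Perthame-type interpolation
$$
\|\rho_f(t)\|_{L^{(k+d)/d}(\TT^d)} \;\leq\; C\,\|f\|_{L^\infty}^{\,k/(k+d)}\,M_k(t)^{d/(k+d)},
$$
which holds on the torus by the same truncation argument as above and relates $\rho_f$ in $L^p$ to the moment one is trying to propagate. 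After balancing $R$ to equate the low- and high-velocity contributions and using that $m_0>d$ is strictly above the critical threshold, one obtains a closed inequality $\dot M_{m_0}\le C\,(1+M_{m_0})^{1+\gamma(m_0)}$ with $\gamma(m_0)$ small enough that Gronwall (or a superlinear ODE comparison) yields a bound on $[0,T]$ depending only on $T$, $\mathcal E[f_0]$, $\|f_0\|_\infty$, $M_0$ and $m_0$.

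The main obstacle is this last step: carefully tracking the exponents in the Chen--Chen combinatorics on the torus, where the kernel $K$ is only asymptotically Coulombic and one must use Lemma~\ref{lem:G} to reduce to the whole-space kernel on balls of radius $\tfrac14$. Once the decomposition $E=\bar E+\widehat E$ has shunted the nonlinear exponential onto a uniformly bounded, Lipschitz piece via Proposition~\ref{prop:regU}, the remaining analysis faithfully mirrors the electron case and no new nonlinear difficulty enters the moment estimate.
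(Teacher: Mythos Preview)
Your treatment of the $\widehat E$ contribution is exactly right and matches the paper: energy conservation plus Lemma~\ref{lem:rho-Lp} gives a time-uniform bound on $\|\rho_f\|_{L^{(d+2)/d}}$, and Proposition~\ref{prop:regU} then yields $\|\widehat E\|_{L^\infty}\leq C(f_0)$, so the nonlinear exponential term contributes only a linear term to any Gronwall inequality. In $d=2$ your direct Gronwall on $M_{m_0}$ also matches the paper and closes, giving $\dot M_{m_0}\leq C\,M_{m_0}^{(m_0+1)/(m_0+2)}$, which is sublinear.

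In $d=3$, however, there is a genuine gap. A direct Gronwall on $M_{m_0}$ of the kind you describe does \emph{not} close: if you carry out the Hardy--Littlewood--Sobolev and moment-interpolation bookkeeping you outline, the exponents combine to
\[
\dot M_{m_0}\;\leq\; C\,(1+M_{m_0})^{(m_0+1)/m_0},
\]
i.e.\ $\gamma(m_0)=1/m_0>0$. A superlinear ODE $\dot M\leq C\,M^{1+\gamma}$ blows up in finite time no matter how small $\gamma$ is, with blow-up time $\sim (C\gamma M_0^\gamma)^{-1}$ that can be arbitrarily short for large $M_0$; ``small $\gamma$'' does not salvage a global-in-time bound. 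This is precisely why the paper (following Pallard and Chen--Chen) does \emph{not} estimate $\dot M_{m_0}$ directly in $d=3$. Instead it introduces the trajectory-integrated quantity
\[
P(t,\delta)\;=\;\sup_{(x,v)\in\TT^3\times\RR^3}\int_{t-\delta}^t \bigl|E\bigl(s,X(s;0,x,v)\bigr)\bigr|\,ds,
\]
and uses the key technical Lemma~\ref{lem:Q-controls-sing} to bound $\int_{t-\delta}^t\!\int |X-y|^{-2}\rho_f\,dy\,ds$ in terms of $P(t,\delta)$ and $M_{3+\varepsilon}$. After the decomposition $E=\bar E+\widehat E$ (handling $\widehat E$ exactly as you do), this produces a functional inequality for $P(t,\delta)$, which after iteration in $\delta$ gives $P(t,t)\leq C(1+t)\bigl(1+\sup_{[0,t]}M_{3+\varepsilon}\bigr)^{1/2}$. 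Combined with the converse estimate $M_{m_0}\leq C\bigl(1+P(t,t)^{\max\{2,m_0-2\}}\bigr)$ and interpolation of $M_{3+\varepsilon}$ between $M_2$ and $M_{m_0}$, one arrives at $P(t,t)\leq C(1+t)\bigl(1+P(t,t)^\beta\bigr)$ with $\beta<1$ for $m_0>3$, which \emph{does} close. The time-averaging along characteristics is not cosmetic: it is what converts the superlinear differential inequality into a sublinear algebraic one.
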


Our approach is based on adapting known methods for the Vlasov-Poisson system \eqref{vp} to the VPME case. The methods differ according to the dimension $d$.
In the two dimensional case $d=2$, we follow the strategy explained in \cite[Section 4.3]{Golse-notes}.
In the three dimensional case, we adapt techniques by Pallard \cite{Pallard} and Chen and Chen \cite{Chen-Chen}.

\subsection{Interpolation Estimate} \label{sec:rho-interpolation}

The following interpolation result allows quantities such as the mass density $\rho_f$ to be estimated in terms of the velocity moments. We introduce the notation $M_m$ for the moment of order $m>0$:
\be \label{def:Mm}
M_m(t) : = \int_{\TT^d \times \RR^d} |v|^m f(t,x,v) \di x \di v .
\ee
{Let us start with a classical Lemma \ref{lem:mom-interpolation}, that we prove for the convenience of the reader.}
\begin{lem} \label{lem:mom-interpolation}
Let $g\geq 0$ be a function in $L^\infty(\TT^d \times \RR^d)$. Assume that $M_m$ as defined in \eqref{def:Mm} is finite for some $m > 0$.
For $k \in [0,m]$, consider the local velocity moments
\be \label{def:lk}
l_k(x) : = \int_{\RR^d} |v|^k g(x,v) \di v .
\ee
There exists a constant $C_{d,m,k}$ depending on $m,k$ and $d$ such that
\be 
\lVert l_k \rVert_{L^{\frac{m+d}{k+d}}(\bt^d)} \leq C_{d,m,k} \lVert g \rVert_{L^{\infty}(\bt^d \times \br^d)}^{\frac{m-k}{m+d}} \lVert l_m\rVert_{L^1}^{\frac{k+d}{m+d}}.
\ee

\end{lem}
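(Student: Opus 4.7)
The plan is to prove the pointwise bound
\[
l_k(x) \leq C_{d,m,k}\, \|g\|_{L^\infty}^{(m-k)/(m+d)}\, l_m(x)^{(k+d)/(m+d)}
\]
by a classical truncation-and-optimisation argument, and then integrate over $\TT^d$. Once the pointwise bound is established, raising it to the power $(m+d)/(k+d)$ gives
\[
l_k(x)^{(m+d)/(k+d)} \leq C\, \|g\|_{L^\infty}^{(m-k)/(k+d)}\, l_m(x),
\]
integration in $x$ produces $\|l_m\|_{L^1}$ on the right, and taking the $(k+d)/(m+d)$-th root yields exactly the stated inequality.

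To obtain the pointwise estimate, for any $R > 0$ I would split the velocity integral defining $l_k(x)$ into the regions $\{|v| \leq R\}$ and $\{|v| > R\}$. On the first region I bound $g$ by its $L^\infty$ norm and compute $\int_{|v| \leq R} |v|^k\,dv = c_{d,k} R^{k+d}$. On the second region I use $|v|^k = |v|^{k-m} |v|^m$ and $|v| > R$ (with $k-m \leq 0$) to get $\int_{|v|>R}|v|^k g\,dv \leq R^{k-m} l_m(x)$. This gives
\[
l_k(x) \leq c_{d,k}\, \|g\|_{L^\infty}\, R^{k+d} + R^{k-m}\, l_m(x).
\]
Now I optimise in $R$: choosing $R$ so that the two terms are comparable requires $R^{m+d} \simeq l_m(x)/\|g\|_{L^\infty}$. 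Substituting this value of $R$ back makes each term equal to a constant times $\|g\|_{L^\infty}^{(m-k)/(m+d)} l_m(x)^{(k+d)/(m+d)}$, which is the desired pointwise bound.

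There is really no hard step here, only bookkeeping with exponents: one has to check that the arithmetic $(m-k)/(k+d) \cdot (k+d)/(m+d) = (m-k)/(m+d)$ matches correctly after raising to the power $(m+d)/(k+d)$ and integrating. The only mild point is that we need $k \leq m$ so that $R^{k-m}$ provides decay in the outer region, and $k \geq 0$ so that $R^{k+d}$ is integrable near the origin; both are granted by the hypothesis $k \in [0,m]$. The case $k = m$ is trivial (both sides equal $\|l_m\|_{L^1}$ up to the exponent $1$ on the left), and the case $k = 0$ recovers the standard interpolation bound on the mass density $\rho = l_0$ in terms of $\|g\|_{L^\infty}$ and $\|l_m\|_{L^1}$.
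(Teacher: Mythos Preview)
Your proposal is correct and follows exactly the same argument as the paper: split the velocity integral at radius $R$, bound the inner region by $\|g\|_{L^\infty}$ and the outer region using $|v|^{k-m}|v|^m$, optimise over $R$ to obtain the pointwise bound, then take the $L^{(m+d)/(k+d)}$ norm. There is nothing to add.
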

\begin{proof}

 Fix $x \in \bt^d$ and split the integral defining $l_k$ in \eqref{def:lk} into a part close to zero and a part far from zero. We obtain, for arbitrary $R > 0$,
\begin{align}
l_k(x) & = \int_{|v| \leq R} |v|^k g(x,v) \di v + \int_{|v| > R} |v|^k g(x,v) \di v  \leq \int_{|v| \leq R} |v|^k g(x,v) \di v + R^{k-m} \int_{|v| > R} g(x,v) |v|^m \di v \\
& \leq \lVert g(x, \cdot) \rVert_{L^{\infty}(\br^d)}\, C_d R^{k+d} + R^{k-m} l_m(x),
\end{align}
where $C_d$ is a dimension dependent constant. The optimal choice of $R$ is 
$$
R(x) = C_{d,m,k} \left ( \frac{l_m(x)}{\lVert g(x, \cdot) \rVert_{L^{\infty}(\br^d)}} \right )^{1/(d+m)}
$$
which results in the estimate
\be 
l_k(x) \leq C_{d,m,k} \lVert g(x, \cdot) \rVert_{L^{\infty}(\br^d)}^{\frac{m-k}{m+d}} l_m(x)^{\frac{k+d}{m+d}} \leq C_{d,m,k} \lVert g \rVert_{L^{\infty}(\br^d)}^{\frac{m-k}{m+d}} l_m(x)^{\frac{k+d}{m+d}}.
\ee

Thus
\be 
\lVert l_k \rVert_{L^{\frac{m+d}{k+d}}(\bt^d)} \leq C_{d,m,k} \lVert g\rVert_{L^{\infty}(\bt^d \times \br^d)}^{\frac{m-k}{m+d}} \lVert l_m\rVert_{L^1}^{\frac{k+d}{m+d}}.
\ee
\end{proof}

In particular, if a solution of the VPME system \eqref{vpme} has bounded energy, then its mass density satisfies a certain $L^p$ estimate.

\begin{lem} \label{lem:rho-Lp}
Let $g \geq 0$ satisfy, for some constant $C_0$,
\be
\lVert g \rVert_{L^{\infty}(\bt^d \times \br^d)} \leq C_0,\qquad
\mc{E} [g] \leq C_0 ,
\ee
where $\mc{E}$ is the energy functional defined in \eqref{def:Ee}. Then,
\be \label{f-mom2}
\int_{\bt^d \times \br^d} |v|^2 g \di x \di v \leq C_1,
\ee
for some constant $C_1$ depending on $C_0$ only. Moreover the mass density
\be \label{def:rho}
\rho_g(x) : = \int_{\br^d} g(x,v) \di v
\ee
lies in $L^{(d+2)/d}(\bt^d)$ with
\be \label{rho-Lp}
\lVert \rho_g \rVert_{L^{\frac{d+2}{d}}(\bt^d)} \leq C_2 .
\ee
for some constant $C_2$ depending on $C_1$ and $d$ only.
\end{lem}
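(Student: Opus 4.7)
The plan is to deduce \eqref{f-mom2} from the bound on the energy $\mc{E}[g]$ and then apply Lemma~\ref{lem:mom-interpolation} to obtain \eqref{rho-Lp}.

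For \eqref{f-mom2}, I would inspect the three pieces of the energy functional
\[
\mc{E}[g] = \frac12 \int_{\bt^d \times \br^d} |v|^2 g \di x \di v + \frac12 \int_{\bt^d} |\nabla U|^2 \di x + \int_{\bt^d} U e^U \di x .
\]
The first term is the kinetic energy that we want to control, while the second is manifestly non-negative. For the last term, note that the scalar function $s \mapsto s e^s$ achieves its global minimum at $s=-1$ with value $-e^{-1}$, so the pointwise estimate $U e^U \geq -e^{-1}$ holds. Since $\bt^d$ has unit volume, this yields $\int_{\bt^d} U e^U \di x \geq -e^{-1}$. Rearranging gives
\[
\frac12 \int_{\bt^d \times \br^d} |v|^2 g \di x \di v \leq \mc{E}[g] + e^{-1} \leq C_0 + e^{-1} =: \tfrac{1}{2} C_1 .
\]

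For \eqref{rho-Lp}, I would apply Lemma~\ref{lem:mom-interpolation} with the choice $k=0$ and $m=2$. Since $l_0 = \rho_g$ and $l_2(x) = \int_{\br^d} |v|^2 g(x,v) \di v$, so that $\|l_2\|_{L^1(\bt^d)} = \int_{\bt^d \times \br^d} |v|^2 g \di x \di v \leq C_1$, the lemma gives
\[
\|\rho_g\|_{L^{(d+2)/d}(\bt^d)} \leq C_{d,2,0} \, \|g\|_{L^\infty}^{2/(d+2)} \, \|l_2\|_{L^1}^{d/(d+2)} \leq C_{d,2,0} \, C_0^{2/(d+2)} \, C_1^{d/(d+2)} =: C_2 .
\]

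The only subtle point is establishing the lower bound on $\int U e^U \di x$, but this reduces to a trivial one-variable minimization of $s e^s$. Everything else is a direct application of the interpolation lemma already proved. This step is essentially the reason the exponent $(d+2)/d$ appears throughout the regularity theory for $\bar U$ and $\widehat U$ developed in Section~\ref{sec:electric}.
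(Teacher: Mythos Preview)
Your proof is correct and follows essentially the same approach as the paper: both use the pointwise lower bound $s e^s \geq -e^{-1}$ to control the last term of $\mc{E}[g]$ from below, yielding the second moment bound, and then apply Lemma~\ref{lem:mom-interpolation} with $m=2$, $k=0$ to obtain the $L^{(d+2)/d}$ estimate on $\rho_g$.
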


\begin{proof}
	Recall that
	\be 
	\mc{E} [g]:= \frac{1}{2}\int_{\bt^d \times \br^d} |v|^2 g \di x \di v + \frac{ 1 }{2} \int_{\bt^d} |\nabla U |^2 \di x +  \int_{\bt^d} U  e^{U } \di x,
	\ee
	where $U$ solves $-\Delta U=e^U-\rho_g$.
	The moment estimate \eqref{f-mom2} follows from the fact that for all $x \in \br$, $x e^x \geq - e^{-1}.$
	Hence the boundedness of $\mc{E} [f]$ implies that
	\be
	\int_{\bt^d \times \br^d} |v|^2 g \di x \di v \leq 2 (C_0 + e^{-1} ) .
	\ee
	The estimate \eqref{rho-Lp} on $\rho_g$ then follows from Lemma~\ref{lem:mom-interpolation}, upon choosing $m=2$ and $k=0$.

\end{proof}

\subsection{Two Dimensions}

In this subsection we always take the dimension $d=2$. The goal is to prove the following lemma on the propagation of moments in two dimensions.

\begin{lem}
	Let $f_0 \in L^1 \cap L^\infty(\TT^2 \times \RR^2)$ satisfy $\mc{E} [f] \leq C_0<+\infty$ and
	\be 
M_{m_0}(0) :=	\int_{\TT^2 \times \RR^2} |v|^{m_0} f_0(x,v) \di x \di v  < +\infty 
	\ee 
	for some $m_0 > 2$.
	Let $f$ be a solution of the VPME system \eqref{vpme}  as in Proposition \ref{prop:moment-propagation}.
	There exists a constant $C_{m_0,0}$, depending only on $m_0$, $M_{m_0}(0)$, $C_0$, and $\|f_0\|_\infty,$ such that
	\be
	M_{m_0}(t) \leq C_{m_0,0} (1+t)^{m_0+2}.
	\ee
\end{lem}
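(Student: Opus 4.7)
My plan is to close a Gronwall-type argument on $M_{m_0}(t)$, combining a uniform $L^\infty$ estimate for the regular part $\widehat E$ with the classical two-dimensional Vlasov--Poisson moment propagation strategy for the singular part $\bar E$. Differentiating $M_{m_0}$ along the Vlasov flow \eqref{vpme} and integrating by parts in $v$,
\begin{equation}
\dot M_{m_0}(t) = m_0 \int_{\bt^2 \times \br^2} |v|^{m_0-2} (v \cdot E) f \di x \di v \leq m_0 \int_{\bt^2} |E(t,x)| \, l_{m_0-1}(t,x) \di x,
\end{equation}
where $l_{m_0-1}(x) := \int_{\br^2} |v|^{m_0-1} f(x,v) \di v$ is the local velocity moment of Lemma~\ref{lem:mom-interpolation}. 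Splitting $E = \bar E + \widehat E$ as in Section~\ref{sec:electric} reduces the task to estimating the two corresponding integrals.

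The contribution of $\widehat E$ is harmless. Since $\mc{E}[f]$ is conserved along the flow and bounded by $C_0$, Lemma~\ref{lem:rho-Lp} yields $\|\rho_f(t)\|_{L^{(d+2)/d}(\bt^2)} \leq C$ uniformly on $[0,T]$ with $C$ depending only on $C_0$ and $\|f_0\|_\infty$. Proposition~\ref{prop:regU} then gives the uniform-in-time bound $\|\widehat E(t)\|_{L^\infty(\bt^2)} \leq \widehat C$, so that
\begin{equation}
\int_{\bt^2} |\widehat E| \, l_{m_0-1} \di x \leq \widehat C \, M_{m_0-1}(t) \leq \widehat C \bigl(1 + M_{m_0}(t)\bigr)^{(m_0-1)/m_0}
\end{equation}
by H\"older's inequality against the probability measure $f \di x \di v$, using $M_0 = 1$.

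For the singular part $\bar E$, I would reproduce the 2D Vlasov--Poisson argument of \cite[Section 4.3]{Golse-notes} applied here to the Poisson equation $\Delta \bar U = 1 - \rho_f$. The key observation is that, because $m_0 > 2$, Lemma~\ref{lem:mom-interpolation} (with $k=0$, $m=m_0$, $d=2$) gives the uniform bound $\|\rho_f(t)\|_{L^{(m_0+2)/2}(\bt^2)} \leq C M_{m_0}(t)^{2/(m_0+2)}$ at the supercritical exponent $(m_0+2)/2 > 2$; combined with Calder\'on--Zygmund and the 2D Sobolev embedding, this controls $\bar E$ in a suitable $L^p(\bt^2)$ norm by powers of $M_{m_0}$. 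Pairing this against the interpolation bound on $l_{m_0-1}$ provided again by Lemma~\ref{lem:mom-interpolation} (now with $k=m_0-1$), and carefully tuning the H\"older exponents, one derives a sub-linear differential inequality of the type
\begin{equation}
\dot M_{m_0}(t) \leq C \bigl(1 + M_{m_0}(t)\bigr)^{(m_0+1)/(m_0+2)},
\end{equation}
whose integration gives precisely $M_{m_0}(t) \leq C_{m_0,0}(1+t)^{m_0+2}$.

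The main obstacle is the exponent bookkeeping in the $\bar E$ term: one must choose the H\"older pairing so that the positive power of $M_{m_0}$ coming from the electric-field estimate combines with the power of $M_{m_0}$ coming from the interpolation of $l_{m_0-1}$ to give a total exponent strictly below $1$, and sharp enough (namely $(m_0+1)/(m_0+2)$) to integrate into the claimed polynomial rate $(1+t)^{m_0+2}$. This is exactly the step where the hypothesis $m_0 > d = 2$ is essential; the VPME-specific exponential nonlinearity plays no active role here, since its only effect, through $\widehat E$, is confined to the uniformly bounded lower-order contribution estimated above.
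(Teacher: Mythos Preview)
Your framework is the paper's: differentiate $M_{m_0}$, reduce to $\int|E|\,l_{m_0-1}$, split $E=\bar E+\widehat E$, control $\widehat E$ uniformly in $L^\infty$ via Proposition~\ref{prop:regU} and energy conservation, and close a sublinear Gronwall on the $\bar E$ contribution. The treatment of $\widehat E$ is correct.

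The gap is in your mechanism for $\bar E$. Using the supercritical bound $\|\rho_f\|_{L^{(m_0+2)/2}}\le C\, M_{m_0}^{2/(m_0+2)}$ forces any resulting $L^p$ estimate on $\bar E$ to carry the factor $M_{m_0}^{2/(m_0+2)}$. Since the sharp interpolation on $l_{m_0-1}$ from Lemma~\ref{lem:mom-interpolation} (with $k=m_0-1$, $m=m_0$) already contributes $M_{m_0}^{(m_0+1)/(m_0+2)}$, the combined exponent is at least $(m_0+3)/(m_0+2)>1$, and no H\"older tuning removes this surplus. Dropping instead to $\|l_{m_0-1}\|_{L^1}=M_{m_0-1}$ and interpolating against $M_2$ closes only for $m_0<6$, and in no case produces the exponent $(m_0+1)/(m_0+2)$ you claim.

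The paper's estimate is simpler and sharper: energy conservation already gives $\|\rho_f(t)\|_{L^2(\bt^2)}\le C$ uniformly in time (Lemma~\ref{lem:rho-Lp}, with $(d+2)/d=2$ here). The Sobolev exponent actually required on $\rho_f$ to place $\bar E$ in $L^{m_0+2}$ is $\tfrac{2(m_0+2)}{m_0+4}<2$, so Calder\'on--Zygmund plus the two-dimensional embedding yield $\|\bar E\|_{L^{m_0+2}(\bt^2)}\le C$ with \emph{no} dependence on $M_{m_0}$. The single pairing
\[
\int_{\bt^2}|\bar E|\,l_{m_0-1}\,\di x\ \le\ \|\bar E\|_{L^{m_0+2}}\,\|l_{m_0-1}\|_{L^{(m_0+2)/(m_0+1)}}\ \le\ C\,M_{m_0}^{(m_0+1)/(m_0+2)}
\]
then gives the claimed differential inequality and the $(1+t)^{m_0+2}$ growth directly. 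In short, you have overshot: in two dimensions the $L^2$ control of $\rho_f$ coming from the energy is already enough, and going to higher integrability only introduces the very $M_{m_0}$-dependence that spoils the closure.
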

\begin{proof}
Using the pushforward representation of $f$, we have, for all $t \in [0,T]$,
\begin{align}
M_{m_0}(t) &= \int_{\mc{Q}_2 \times \RR^2} |v|^{m_0} f(t,x,v) \di x \di v \\
& = \int_{\mc{Q}_2 \times \RR^2} |V(t ; 0,x,v)|^{m_0} f_0(x,v) \di x \di v .
\end{align}
This identity can be used to calculate the time derivative of $M_{m_0}$, using the definition of the characteristic flow: for any ${m_0} > 2$,
\begin{align}
\frac{\di }{\di t} M_{m_0}(t) & = {m_0} \int_{\mc{Q}_2 \times \RR^2} E \left (X(t; 0,x,v) \right ) \cdot V(t ; 0,x,v) |V(t ; 0,x,v)|^{{m_0}-2} f_0(x,v) \di x \di v \\
& = {m_0} \int_{\mc{Q}_2 \times \RR^2} E (x) \cdot v |v|^{{m_0}-2} f(t,x,v) \di x \di v \\
& \leq {m_0} \int_{\mc{Q}_2 } |E (x)|  \int_{\RR^2} |v|^{{m_0}-1} f(t,x,v) \di v \di x  = {m_0} \int_{\mc{Q}_2 } |E (x)| \, l_{{m_0}-1}(x) \di x  .
\end{align}
Applying H\"{o}lder's inequality in the $x$ variable with exponent $p$ gives
\be
\frac{\di }{\di t} M_{m_0}(t) \leq {m_0} \| E \|_{L^{p'}(\TT^2)} \left \| l_{{m_0}-1} \right \|_{L^p(\TT^2)} .
\ee
Choose $p = \frac{{m_0}+2}{{m_0}+1}$ and apply the moment interpolation estimate from Lemma~\ref{lem:mom-interpolation}, to obtain
\be
\frac{\di }{\di t} M_{m_0}(t) \leq C_{m_0} \| E \|_{L^{{m_0}+2}(\TT^2)} M_{m_0}(t)^{\frac{{m_0}+1}{{m_0}+2}}
\ee
where the constant $C_{m_0} > 0$ depends only on ${m_0}$.
The remaining step is to estimate $\| E \|_{L^{m_0+2}(\TT^2)}$. To do this, first use the decomposition $E = \bar E + \widehat E$:
\be
\| E \|_{L^{{m_0}+2}(\TT^2)} \leq \| \bar E \|_{L^{{m_0}+2}(\TT^2)} + \| \widehat E \|_{L^{{m_0}+2}(\TT^2)} .
\ee
For the Poisson part $\bar E$, standard regularity estimates for the Poisson equation imply that
\be
\| \bar E \|_{L^{{m_0}+2}(\TT^2)} \leq C_{m_0} \| \rho_f \|_{L^{\frac{2({m_0}+2)}{{m_0}+4}}(\TT^2)}
\ee
Note that $\frac{2({m_0}+2)}{{m_0}+4} < 2$. Thus, due to the uniform control of the energy functional $\mc{E}$ (defined in \eqref{def:Ee}) we have
\be
\| \bar E \|_{L^{{m_0}+2}(\TT^2)} \leq C_{m_0} \| \rho_f \|_{L^2(\TT^2)} \leq C_{{m_0},0},
\ee
where $C_0 > 0$ depends only on ${m_0}$, $\mc{E}[f_0]$ and $\| f_0 \|_{L^\infty(\TT^2 \times \RR^2)}$. For $\widehat E$, we use Proposition~\ref{prop:regU} which provides the bound
\be
\| \widehat E \|_{L^\infty(\TT^2)} \leq \exp \left ( C_d \left (1 + \| \rho_f\|_{L^2(\TT^2)} \right ) \right ) \leq C_0,
\ee
where $C_0 > 0$ depends only on $\mc{E}[f_0]$ and $\| f_0 \|_{L^\infty(\TT^2 \times \RR^2)}$ (here we are using Lemma \ref{lem:rho-Lp} and the fact that our solution conserves the energy). Altogether, we have the estimate
\be
\| E \|_{L^{{m_0}+2}(\TT^2)} \leq C_{{m_0},0} ,
\ee
and thus
\be
\frac{\di }{\di t} M_{m_0}(t) \leq C_{{m_0},0} M_{m_0}(t)^{\frac{{m_0}+1}{{m_0}+2}}.
\ee

It follows that there exists a constant $C_{{m_0},0} > 0$ depending on ${m_0}$, $\mc{E}[f_0]$, $\| f_0 \|_{L^\infty(\TT^2 \times \RR^2)}$ and $M_{m_0}(0)$ such that for all $t \in [0,T]$,
\be
M_{m_0}(t) \leq C_{{m_0},0} (1+t)^{{m_0}+2}.
\ee
\end{proof}

\subsection{Three Dimensions}

In this subsection, we prove propagation of moments in the three dimensional case $d=3$, which is stated in the lemma below.

\begin{lem} \label{lem:moment-propagation-3d}
Let $d=3$. Let $f$ be a solution of the VPME system \eqref{vpme}  as in Proposition \ref{prop:moment-propagation}, with initial datum $f_0 \in L^1 \cap L^\infty(\TT^d \times \RR^d)$ satisfying
\be
\int_{\TT^d \times \RR^d} |v|^{m_0} f_0(x,v) \di x \di v =M_0< + \infty, \qquad m_0 > d .
\ee
Then, for all $T>0$, \be
\sup_{t \in [0,T]} \int_{\TT^d \times \RR^d} |v|^{m_0} f(t,x,v) \di x \di v \leq C(T,M_0,m_0,\|f_0\|_\infty) .
\ee
\end{lem}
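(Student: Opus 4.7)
\bigskip
\noindent\textbf{Proof proposal for Lemma~\ref{lem:moment-propagation-3d}.}

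The plan is to run a Gronwall argument on $M_{m_0}(t)$, in the spirit of Lions--Perthame for the classical Vlasov-Poisson system in $\RR^3$, using the decomposition $E=\bar E+\widehat E$ from Section~\ref{sec:electric} to isolate the singular contribution from the regular one. First I would express $M_{m_0}(t)$ via the characteristic flow $(X,V)$ and differentiate in time to obtain
\be
\frac{\di}{\di t}M_{m_0}(t)\le m_0\int_{\TT^3\times\RR^3}|E(t,x)|\,|v|^{m_0-1}f(t,x,v)\,\di x\,\di v
= m_0\int_{\TT^3}|E(t,x)|\,l_{m_0-1}(t,x)\,\di x,
\ee
and then split $E=\bar E+\widehat E$, estimating the two pieces separately.

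For the regular part $\widehat E$, the strategy is to use the energy conservation of the weak solution (which follows from the construction in Section~\ref{sec:construction}) to bound $\mc{E}[f(t)]$ uniformly in $t$. Combined with Lemma~\ref{lem:rho-Lp}, this gives $\|\rho_f(t)\|_{L^{5/3}(\TT^3)}\le C$ uniformly in time. By the third estimate in Proposition~\ref{prop:regU} we then have $\|\widehat E(t)\|_{L^\infty(\TT^3)}\le C$ uniformly in time, and hence the $\widehat E$-contribution is bounded by $C\,M_{m_0-1}(t)\le C(1+M_{m_0}(t))^{(m_0-1)/m_0}$ after a simple interpolation of $l_{m_0-1}$ in $v$ against $l_{m_0}$.

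For the singular part $\bar E$, I would apply H\"older's inequality with the dual exponents
\be
p=m_0+3,\qquad p'=\frac{m_0+3}{m_0+2},
\ee
so that the interpolation of Lemma~\ref{lem:mom-interpolation} applied with $d=3$, $k=m_0-1$, $m=m_0$ yields
\be
\|l_{m_0-1}(t)\|_{L^{p'}(\TT^3)}\le C\,\|f\|_\infty^{1/(m_0+3)}\,M_{m_0}(t)^{(m_0+2)/(m_0+3)}.
\ee
For $\|\bar E(t)\|_{L^{m_0+3}(\TT^3)}$, I would use Calder\'on--Zygmund/Sobolev estimates for $\Delta\bar U=1-\rho_f$: one has $\|\bar E\|_{L^{m_0+3}}\lesssim 1+\|\rho_f\|_{L^q}$ with $1/q=1/(m_0+3)+1/3$, i.e.\ $q=3(m_0+3)/(m_0+6)$. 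Since $m_0>3$ one has $q\le(m_0+3)/3$, so interpolating between $\|\rho_f\|_{L^1}=1$ and the bound $\|\rho_f\|_{L^{(m_0+3)/3}}\lesssim M_{m_0}^{3/(m_0+3)}$ from Lemma~\ref{lem:mom-interpolation} gives $\|\bar E\|_{L^{m_0+3}}\lesssim 1+M_{m_0}^{\theta}$ for some explicit $\theta=\theta(m_0)<1$. Combining these bounds produces a differential inequality of the form
\be
\frac{\di}{\di t}M_{m_0}(t)\le C\bigl(1+M_{m_0}(t)\bigr)^{1-\delta}
\ee
for some $\delta=\delta(m_0)>0$, and a standard Gronwall/comparison argument then yields $M_{m_0}(t)\le C(T,M_0,m_0,\|f_0\|_\infty)$ for all $t\in[0,T]$.

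The main obstacle is the borderline case $m_0$ just above $d=3$: the integrability of $\rho_f$ coming from conservation of energy is only $L^{5/3}$, so the Sobolev exponent for $\bar E$ is only slightly above $6$, and one must verify that the exponents $\theta$ and $(m_0+2)/(m_0+3)$ combine to a strictly sub-linear power of $M_{m_0}$ on the right-hand side. (For $m_0\ge 6$ this is immediate since $\bar E\in L^\infty$ by Morrey embedding, and the argument simplifies considerably.) The key new ingredient compared with the classical Vlasov--Poisson proof is that the additional electric field $\widehat E$ is in fact \emph{more} regular than $\bar E$, so that no new difficulty beyond the uniform $L^\infty$ bound on $\widehat E$ is introduced by the ion model.
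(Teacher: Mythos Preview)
Your treatment of $\widehat E$ is correct and is indeed the new ingredient: Proposition~\ref{prop:regU} plus energy conservation give $\|\widehat E(t)\|_{L^\infty}\le C$ uniformly in time, exactly as in the paper. The gap is in the $\bar E$ part. If you carry the exponents through, your differential inequality does \emph{not} close. With $p=m_0+3$ and $q=3(m_0+3)/(m_0+6)$, Lemma~\ref{lem:mom-interpolation} gives $\|\rho_f\|_{L^q}\lesssim M_{m_0}^{(2m_0+3)/(m_0(m_0+3))}$ (after interpolating the required moment back to $M_{m_0}$), so that
\[
\theta+\frac{m_0+2}{m_0+3}=\frac{2m_0+3}{m_0(m_0+3)}+\frac{m_0(m_0+2)}{m_0(m_0+3)}=\frac{(m_0+1)(m_0+3)}{m_0(m_0+3)}=\frac{m_0+1}{m_0}>1.
\]
So the inequality you obtain is $\dot M_{m_0}\le C(1+M_{m_0})^{(m_0+1)/m_0}$, which is super-linear and gives no global bound. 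The same computation with $\bar E\in L^\infty$ for $m_0>6$ still yields exponent $(m_0+5)/(m_0+3)>1$. This is precisely the well-known obstruction that prevents the two-dimensional argument from working in $d=3$ for classical Vlasov--Poisson; the VPME case inherits it because $\bar E$ is the same Coulomb field.

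The paper instead follows the Pallard/Chen--Chen route: rather than a pointwise-in-time differential inequality, one controls the time-integrated force along characteristics $P(t,\delta)=\sup_{x,v}\int_{t-\delta}^t|E(X(s))|\,\di s$. The key analytic input is a short-time estimate (Lemma~\ref{lem:Q-controls-sing}) of the form
\[
\int_{t-\delta}^t\int\frac{\rho_f(s,y)}{|X(s)-y|^2}\,\di y\,\di s\le C\Bigl(\delta P(t,\delta)^{4/3}+\delta^{1/2}(1+\delta P(t,\delta))^{1/2}P(t,\delta)^{-1/2}M_{3+\varepsilon}(t)^{1/2}\Bigr),
\]
which exploits the dispersive structure of the free transport (the $\dot X=V$ part of the flow) to gain extra smallness over short intervals. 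Feeding $\|\widehat E\|_{L^\infty}\le C$ into $P(t,\delta)$ and iterating as in \cite{Chen-Chen} then yields $P(t,t)\le C(1+t)(1+\sup_s M_{m_0}(s))^{(1+\varepsilon)/(2(m_0-2))}$, and since $M_{m_0}\lesssim 1+P(t,t)^{\max\{2,m_0-2\}}$ this closes for $m_0>3$. The missing idea in your proposal is this time-averaging along characteristics; the instantaneous H\"older/Sobolev estimate on $\bar E$ cannot substitute for it.
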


As in the two dimensional case, the time evolution of the moment $M_{m_0}(t)$ can be studied by using the pushforward representation of $f$:
\be
\int_{\TT^d \times \RR^d} |v|^{m_0} f(t,x,v) \di x \di v = \int_{\TT^d \times \RR^d} |V(t; 0,x,v)|^{m_0} f_0(x,v) \di x \di v.
\ee
Observe that
\be
|V(t; 0,x,v)| \leq |v| + \left | \int_0^t E \left ( X(t;0,x,v) \right ) \di s \right | .
\ee
The next step is to estimate $E$.
As was discussed for the two dimensional case, the overall strategy is to use the decomposition $E = \bar E + \widehat E$, and to notice that by Proposition~\ref{prop:regU} and the conservation of energy, $\widehat E$ is controlled uniformly in time:
\be
\| \widehat E(t, \cdot) \|_{L^\infty(\TT^3)} \leq \exp{ \left ( C (1 + \| \rho_f(t, \cdot) \|_{L^{\frac{5}{3}}(\TT^3)}) \right )} \leq C(f_0).
\ee
The remaining step is to estimate $\bar E$. For this we use techniques established for the electron Vlasov-Poisson system by Pallard \cite{Pallard} and Chen and Chen \cite{Chen-Chen}; here we particularly follow the method of Chen and Chen \cite{Chen-Chen}.

First note that, by Lemma~\ref{lem:G}, there exists a constant $C$ such that for all $x \in \TT^d$,
\be \label{est:K-singularity}
|K(x)| \leq C (1 + |x|^{-2}).
\ee
It follows that
\be
|\bar E(t,x)| = |K \ast \rho_f(t,x) | \leq C \int_{x - \mc{Q}_3} \frac{\rho_f(t, y)}{|x-y|^2} \di y + C \| \rho_f\|_{L^1(\TT^d)}  .
\ee

To estimate the integral term we will use a technical lemma from \cite{Chen-Chen}. The estimate therein makes use of the fact that $f$ is the pushforward of its initial data along the characteristic flow of the electron Vlasov-Poisson system. In the VPME setting, the relevant characteristic flow has a different structure since $E = \bar E + \widehat E$, and so the estimate from \cite{Chen-Chen} cannot be applied immediately as stated. However, upon examining the proof it is possible to see that the estimate also applies to other characteristic flows, under the following set of assumptions.

\begin{hyp} \label{hyp:flow}
Let $X(t; s,x,v), V(t; s,x,v)$ denote a flow induced by a vector field of the form 
$$(v, a(t,x,v))$$ for some function $a$. That is,
	\be
	\begin{cases}
		\frac{\di}{\di t} X(t ; s,x,v) = V(t;s,x,v) \\
		\frac{\di}{\di t} V(t;s,x,v) = a(t, X(t;s,x,v) ,V(t;s,x,v)) \\
		(X(s;s,x,v), V(s;s,x,v)) = (x,v) .
	\end{cases}
	\ee
	Assume that the following properties hold.
	\begin{itemize}
		\item (Uniform control of small increments in velocity) Define $P(t,\delta)$ by
		\be
		P(t, \delta) : = \sup_{(x,v) \in \TT^3 \times \RR^3} \int_{t-\delta}^t |a(t, X(s;0,x,v) ,V(s;0,x,v))| \di s.
		\ee
		Assume that $P(t,\delta)$ is finite.
		\item Fix $f_0 \in L^1 \cap L^\infty(\TT^3 \times \RR^3)$ and let $f$ be the pushforward of $f_0$ along the flow.
		
		Assume that there exists a constant $C_0 > 0$ such that for all $t$
		\be
		\| f(t, \cdot, \cdot) \|_{L^\infty} \leq C_0, \qquad \int_{\TT^3 \times \RR^3} |v|^2 f(t,x,v) \di x \di v \leq C_0 .
		\ee
	\end{itemize}
\end{hyp}

By following the proof of \cite[Proposition 3.3]{Chen-Chen}, it is possible to see that the following estimate holds for any vector field satisfying Assumption~\ref{hyp:flow}.

\begin{lem}\label{lem:Q-controls-sing}
	Let $(X(t;s,x,v), V(t;s,x,v))$ and $f$ satisfy Assumption~\ref{hyp:flow}. Then
	\be
	\int_{t-\delta}^t \int_{X - \mc{Q}_3} \frac{\rho_f(s,y)}{|X(s; 0,x,v) - y|^2}  \di y  \di s \leq C \left ( \delta P(t,\delta)^{4/3} + \delta^{1/2} (1 + \delta P(t,\delta))^{1/2} P(t,\delta)^{-1/2} M_{3+\e}(t)^{1/2} \right ),
	\ee
	where the constant $C>0$ depends on $C_0,$ {and $\e>0$}.
\end{lem}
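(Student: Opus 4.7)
The strategy is to follow the argument of \cite[Proposition~3.3]{Chen-Chen}, observing that it uses only the abstract properties recorded in Assumption~\ref{hyp:flow} and not any specific feature of the electron Vlasov--Poisson force. The first move is to rewrite the integrand in terms of $f(t,\cdot,\cdot)$, whose moments and $L^\infty$ norm we control by hypothesis. Since $f(s,\cdot,\cdot)$ is the pushforward of $f(t,\cdot,\cdot)$ by the backwards flow from $t$ to $s$, one has
\be
\int_{\TT^3} \frac{\rho_f(s,y)}{|X(s;0,x,v)-y|^2}\di y = \int_{\TT^3 \times \br^3} \frac{f(t,z,u)}{|X(s;0,x,v)-X(s;t,z,u)|^2} \di z \di u,
\ee
which trades the unknown density $\rho_f(s)$ for the controlled density $f(t)$, at the cost of putting a difference of characteristic trajectories in the singular denominator.

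Next I would linearise the two trajectories on the short interval $[t-\delta,t]$. Using $P(t,\delta)$ to bound the oscillation of velocity along each characteristic, for $r:=t-s\in[0,\delta]$ one writes
\be
X(s;t,z,u) = z - r u + \eta_1, \qquad X(s;0,x,v) = x_\star - r v_\star + \eta_0,
\ee
with $|\eta_0|,|\eta_1|\leq \delta P(t,\delta)$ and $(x_\star,v_\star):=(X(t;0,x,v),V(t;0,x,v))$. Setting $\xi := x_\star - z$, $\omega := v_\star - u$, and $e := \eta_0-\eta_1$, the denominator becomes $|\xi-r\omega+e|^2$ with $|e|\leq 2\delta P(t,\delta)$.

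I would then split the $(z,u)$-integral into a \emph{near-field} region in which $|\xi|$ and $\delta|\omega|$ both lie below a cutoff $\sigma$, and its complement. In the near field the trajectories stay within a ball of radius $\lesssim \sigma$ of each other, so that contribution is bounded by $\|f(t)\|_{L^\infty}$ times the volume of the admissible set of $(z,u)$. In the far field one uses the explicit computation
\be
\int_0^\delta \frac{\di r}{|\xi-r\omega|^2} \leq \frac{\pi}{|\omega|\,|\xi_\perp|},
\ee
which integrates the time variable out cleanly; the remaining $(z,u)$-integral is then controlled by Cauchy--Schwarz, with the moment bound $M_{3+\e}(t)$ providing the decay at large $|u|$. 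Choosing $\sigma$ of order $\delta P(t,\delta)$ balances the two pieces and, after checking that the perturbation $e$ is absorbed harmlessly into the near-field term, yields exactly the claimed sum
\be
C\bigl(\delta P(t,\delta)^{4/3} + \delta^{1/2}(1+\delta P(t,\delta))^{1/2} P(t,\delta)^{-1/2} M_{3+\e}(t)^{1/2}\bigr).
\ee

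The main obstacle is the bookkeeping in this last step: extracting exactly the exponent $4/3$ on $P$ in the near field, and the factor $P^{-1/2}$ paired with $M_{3+\e}(t)^{1/2}$ in the far field, requires a careful geometric argument weighing the volume of the admissible set of $(z,u)$ against the strength of the singularity $|\xi-r\omega|^{-2}$. The only conceptually new point compared with \cite{Chen-Chen} is to verify that every manipulation there uses nothing beyond Assumption~\ref{hyp:flow} -- i.e.\ the pointwise velocity-increment control $P(t,\delta)$ and the uniform $L^\infty$ and energy bounds on $f$ -- and not any special structure of the Vlasov--Poisson force. Once this is confirmed, their argument transfers verbatim to the VPME characteristic flow.
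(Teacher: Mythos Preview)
Your proposal is correct and matches the paper's approach exactly: the paper does not give an independent proof of this lemma but simply states that the estimate follows ``by following the proof of \cite[Proposition~3.3]{Chen-Chen}'' once one checks that the argument there uses only the properties recorded in Assumption~\ref{hyp:flow}. Your sketch of the near-field/far-field decomposition and the verification that nothing beyond the uniform $L^\infty$ bound, the second-moment bound, and the velocity-increment control $P(t,\delta)$ is used is precisely the content the paper leaves implicit.
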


In particular, the flow induced by $(v, E(x))$, where $E$ is the electric field for a solution of the VPME system with finite energy, satisfies these assumptions. Henceforth we define the quantities $P(t,\delta)$ by
		\be
		P(t,\delta) : = \sup_{(x,v) \in \TT^3 \times \RR^3} \int_{t-\delta}^t |E(t, X(s;0,x,v)| \di s.
		\ee
Using the same proof as for \cite[Proposition 3.1]{Chen-Chen}, it can be shown that these quantities can be used to estimate the moments $M_{m}$.
\begin{lem}
Let $m>2$. There exists a constant $C>0$ depending on $M_m(0), M_{m-2}(0)$ and $M_2(t)$ such that
\be \label{Q-controls-moment}
M_m(t) \leq C \[ P(t,t)^{\max\{ 2, m-2\}} + 1 \].
\ee
\end{lem}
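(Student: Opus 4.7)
The plan is to convert the statement into a pointwise estimate via the characteristic representation, and then to peel off $|V|^{m-2}$ and $|V|^2$ separately so that the conservation of $M_2$ via push-forward can be exploited. Writing $R := P(t,t)$ for brevity, the starting observation is that, for every $(x,v) \in \TT^3 \times \RR^3$, direct integration of the velocity ODE in Assumption~\ref{hyp:flow} and the very definition of $P$ give the pointwise bound
$$|V(t;0,x,v)| \leq |v| + \int_0^t |E(s, X(s;0,x,v))|\,ds \leq |v| + R.$$

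Combining this with the push-forward identity (which yields $M_m(t) = \int |V(t;0,x,v)|^m f_0(x,v)\,dx\,dv$), I would factor $|V|^m = |V|^{m-2} \cdot |V|^2$ and apply the pointwise bound only to the $|V|^{m-2}$ factor. Using convexity for $m \geq 3$ and subadditivity of $r \mapsto r^{m-2}$ for $2 < m < 3$, one has $|V|^{m-2} \leq C_m(|v|^{m-2} + R^{m-2})$, so
\begin{align*}
M_m(t) \leq C_m \int |v|^{m-2} |V(t;0,x,v)|^2 f_0\,dx\,dv + C_m R^{m-2} \int |V(t;0,x,v)|^2 f_0\,dx\,dv.
\end{align*}
The second integral is exactly $M_2(t)$ by the push-forward identity applied with the test function $|v|^2$. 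In the first integral the remaining factor $|V|^2$ is controlled pointwise by $2(|v|^2 + R^2)$, which after integration against $f_0$ yields $2 M_m(0) + 2 R^2 M_{m-2}(0)$. Putting everything together,
$$M_m(t) \leq C\bigl(M_m(0) + R^2 M_{m-2}(0) + R^{m-2} M_2(t)\bigr),$$
and the elementary inequality $R^2 + R^{m-2} \leq 2(1 + R^{\max\{2,m-2\}})$ — verified by separately considering $R \leq 1$ and $R \geq 1$ — yields the claimed bound \eqref{Q-controls-moment}, with the constant depending on exactly the three quantities $M_m(0)$, $M_{m-2}(0)$ and $M_2(t)$ stated in the lemma.

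The crux of the argument — essentially Chen and Chen's trick — is that precisely \emph{two} powers of $|V|$ must be kept intact to exploit the push-forward identity. Applying the pointwise bound to all $m$ powers would produce the weaker exponent $R^m$, whereas by trading the reserved $|V|^2$ factor for $M_2(t)$ (which merely enters the constant) one saves a factor $R^2$ and recovers the sharp exponent $\max\{2, m-2\}$. I do not foresee a serious obstacle; the only mild care needed is the separate treatment of the ranges $m\ge 3$ and $2<m<3$ in the inequality $(|v|+R)^{m-2} \leq C_m(|v|^{m-2} + R^{m-2})$, and handling the case $R \leq 1$ in the final elementary estimate so that the constant in front of $R^{\max\{2,m-2\}}$ remains $m$-independent.
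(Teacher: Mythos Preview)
Your argument is correct and is precisely the Chen--Chen approach that the paper cites (the paper itself gives no independent proof, simply invoking \cite[Proposition 3.1]{Chen-Chen}). The key step you identify---reserving the factor $|V|^2$ so that the push-forward identity converts it into $M_2(t)$, thereby saving two powers of $R$---is exactly the mechanism behind the exponent $\max\{2,m-2\}$, and your handling of the two ranges of $m$ and of small $R$ is accurate.
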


Using these results, we can now conclude the proof of Lemma~\ref{lem:moment-propagation-3d}.

\begin{proof}[Proof of Lemma~\ref{lem:moment-propagation-3d}]

We use Lemma~\ref{lem:Q-controls-sing} to bound $\bar E$. By \eqref{est:K-singularity}, along each trajectory $X^*(s) = X(s;0,x,v)$ we have
\begin{align} \label{est:barE-Q}
\int_{t-\delta}^t | \bar E(X^*(s))| \di s & \leq C \int_{t-\delta}^t \int_{X^*(s) - \mc{Q}_3} \frac{1}{|X^*(s) - y|^2} \rho_f(s,y) \di y  \di s + C \delta \| \rho_f \|_{L^1(\TT^d)} \\
& \leq C\delta + C \left ( \delta P(t,\delta)^{4/3} + \delta^{1/2} (1 + \delta P(t,\delta))^{1/2} P(t,\delta)^{-1/2} M_{3+\e}(t)^{1/2} \right ).
\end{align}

We then deduce a bound on $P(t,\delta)$ by using the decomposition $E = \bar E + \widehat E$. First, note that
\begin{align}
P(t,\delta) & = \sup_{x,v \in \TT^3 \times \RR^3} \int_{t-\delta}^t | E(X(s ; 0,x,v))| \di s \\
& \leq \sup_{x,v \in \TT^3 \times \RR^3} \left ( \int_{t-\delta}^t | \bar E(X(s ; 0,x,v))| \di s + \int_{t-\delta}^t | \widehat E(X(s ; 0,x,v))| \di s \right ) .
\end{align}
By Proposition~\ref{prop:regU}, $\widehat E$ is bounded, uniformly in $x$ and $t$. Thus there exists a constant $C>0$ depending only on $\| f_0 \|_{L^\infty}$ and $\mc{E}[f_0]$ such that
\be
P(t,\delta) \leq C \delta + \sup_{x,v \in \TT^3 \times \RR^3} \int_{t-\delta}^t | \bar E(X(s ; 0,x,v))| \di s .
\ee
Then, by the bound \eqref{est:barE-Q} on $\bar E$,
\be
P(t,\delta) \leq C \delta + C \left ( \delta P(t,\delta)^{4/3} + \delta^{1/2} (1 + \delta P(t,\delta))^{1/2} P(t,\delta)^{-1/2} M_{3+\e}(t)^{1/2} \right ) .
\ee
Multiplying by $P(t,\delta)^{1/2}$, we deduce that
\begin{align}
P(t,\delta)^{3/2} & \leq C \delta P(t,\delta)^{1/2} + C \left ( \delta P(t,\delta)^{11/6} + \delta^{1/2} (1 + \delta P(t,\delta))^{1/2} M_{3+\e}(t)^{1/2} \right ) \\
& \leq C \left ( \delta P(t,\delta)^{11/6} + \delta^{1/2} (1 + \delta P(t,\delta))^{1/2} (1+ M_{3+\e}(t)^{1/2}) \right ) .
\end{align}
Then, as explained in \cite[Proposition 3.3]{Chen-Chen}, it follows from the estimate above that
\be
P(t,t) \leq C (1 + t) \left ( 1 + \sup_{s \in [0,t]} M_{3+\e}(s) \right )^{\frac12} .
\ee
The end of the proof then follows as in \cite{Chen-Chen}. First, interpolate $M_{3+\e}$ between $M_2$ and $M_{m_0}$: by H\"{o}lder's inequality,
\be
M_{3+\e}(t) = \int_{\TT^3 \times \RR^3} |v|^{3 + \e} f \di x \di v \leq M_2(t)^{\frac{m_0-3-\e}{m_0-2}} M_{m_0}(t)^{\frac{1+\e}{m_0-2}} .
\ee
Thus
\be \label{Moment-controls-Q}
P(t,t) \leq C (1 + t)  \( 1 + \sup_{s \in [0,t]} M_{m_0}(s)^{\frac{1+\e}{2(m_0-2)}} \) .
\ee
Substituting estimate \eqref{Q-controls-moment} into \eqref{Moment-controls-Q}, we obtain
\be
P(t,t) \leq C (1 + t)  \( 1 + P(t,t)^{ \max{\left\{\frac{1+\e}{2}, \frac{1+\e}{m_0-2} \right\}}} \) .
\ee
For sufficiently small $\e$, the exponent $\max{\{\frac{1+\e}{2}, \frac{1+\e}{m_0-2} \}}$ is strictly less than one, and so there is a constant $C > 0$ depending on $t$, $\mc{E}[f_0]$, $\| f_0 \|_{L^\infty(\TT^3 \times \RR^3)}$ and $M_{m_0}(0)$ such that
\be
P(t,t) \leq C .
\ee
It then follows from \eqref{Q-controls-moment} that
\be
\sup_{t \in [0,T]} M_{m_0}(t) < + \infty,
\ee
as desired.
\end{proof}

\section{Construction of Solutions} \label{sec:construction}

In this section, we show global existence of weak solutions for the VPME system \eqref{vpme} for initial data with finite velocity moments of sufficiently high order.

\begin{thm} \label{thm:existence}
Let $d = 2, 3$. Consider an initial datum $f_0 \in L^1 \cap L^\infty(\bt^d \times \br^d)$ satisfying
\be
\int_{\TT^d \times \RR^d} |v|^{m_0} f_0(x,v) \di x \di v < + \infty, \; \;\mbox{for some}\,\, m_0 > d .
\ee
Then there exists a global-in-time weak solution $f \in C([0,\infty); \mc{P}(\bt^d \times \br^d))$ of the VPME system \eqref{vpme} with initial data $f_0$, such that for all $T>0$,
\be
\sup_{t \in [0,T]} \int_{\TT^d \times \RR^d} |v|^{m_0} f(t,x,v) \di x \di v < + \infty .
\ee
\end{thm}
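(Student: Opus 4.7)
The plan is to construct solutions by a regularization procedure, following the outline indicated in the introduction. First, I would introduce a smoothed version of the system, for instance (VPME)$^\e$ in which the Poisson equation is replaced by
$$
\Delta U^\e = e^{U^\e} - \chi_\e \ast \rho_{f^\e},
$$
where $\chi_\e$ is a standard mollifier on $\bt^d$. For fixed $\e>0$, the nonlinear Poisson equation still admits a unique solution by the variational argument of Proposition~\ref{prop:hatU-e}, and since the right-hand side is now smooth in $x$, the resulting electric field $E^\e=-\nabla U^\e$ is smooth and globally Lipschitz in $x$, with bounds depending on $\e$. Consequently the characteristic ODEs associated to $(v,E^\e)$ generate a well-defined Lipschitz flow, and the pushforward $f^\e(t) = (X^\e_t,V^\e_t)_\# f_0$ yields a weak (in fact measure-valued) solution of (VPME)$^\e$. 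Existence and uniqueness of $f^\e$ on $[0,T]$ can be obtained by a Dobrushin-style fixed-point iteration in $C([0,T];\mathcal{P}_1(\bt^d\times\br^d))$, using the $W_1$-Lipschitz dependence of $E^\e$ on $\rho$ (a consequence of Proposition~\ref{prop:Ustab} together with the mollification).

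Next, I would prove uniform-in-$\e$ a priori estimates. The conservation of the (regularized) energy together with Lemma~\ref{lem:rho-Lp} provides a uniform bound
$$
\sup_{t\in[0,T]}\|\rho_{f^\e}(t)\|_{L^{(d+2)/d}(\bt^d)}\le C_0,
$$
which, through Proposition~\ref{prop:regU} (applied with $h = \chi_\e\ast\rho_{f^\e}$), yields a uniform $L^\infty$ bound on $\widehat U^\e$ and hence on $\widehat E^\e$. With these ingredients in hand, the moment propagation argument of Proposition~\ref{prop:moment-propagation} can be repeated directly on the regularized system (where all computations are rigorous because $f^\e$ has a smooth flow and the initial data can be taken to be compactly supported or truncated if needed), giving the uniform bound
$$
\sup_{\e>0}\,\sup_{t\in[0,T]}\int_{\bt^d\times\br^d}|v|^{m_0}f^\e(t,x,v)\,\di x\,\di v \le C(T,m_0,C_0,\|f_0\|_\infty).
$$

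The third step is to extract a limit. The uniform moment bound, together with the Vlasov equation itself, yields equicontinuity of $t\mapsto f^\e(t)$ in $\mathcal{P}_1(\bt^d\times\br^d)$ (or equivalently narrow continuity), so by an Arzel\`a--Ascoli argument one obtains $f^\e\to f$ in $C([0,T];\mathcal{P}(\bt^d\times\br^d))$ along a subsequence. The moment bound also gives tightness in $v$, and Lemma~\ref{lem:mom-interpolation} upgrades the narrow convergence of $f^\e$ to strong convergence $\rho_{f^\e}\to\rho_f$ in $L^p_tL^q_x$ for suitable exponents depending on $m_0$. Combined with Proposition~\ref{prop:Ustab}, this gives strong convergence $E^\e\to E$ in $L^2_{\mathrm{loc}}([0,T]\times\bt^d)$ where $E$ is the electric field generated by the limit $\rho_f$.

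The main obstacle — and the only non-routine step — is passing to the limit in the nonlinearity $e^{U^\e}$ inside the Poisson equation, and in the product $E^\e\cdot\nabla_v f^\e$ inside the Vlasov equation. Both difficulties are handled by the same mechanism: the uniform $L^{(d+2)/d}$ control on $\rho^\e$ feeds into Proposition~\ref{prop:regU} to give uniform $C^{1,\alpha}$ (in fact $C^{2,\alpha}$ for $\widehat U^\e$) bounds, which together with the strong convergence of $\rho^\e$ and the stability estimates of Proposition~\ref{prop:Ustab} give $U^\e\to U$ uniformly, hence $e^{U^\e}\to e^U$ in $L^\infty$. For the Vlasov equation, the strong $L^2$ convergence of $E^\e$ combined with the narrow convergence of $f^\e$ (and the uniform second moment) allows one to pass to the limit in the weak formulation against test functions in $C^1_c([0,T)\times\bt^d\times\br^d)$. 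The limit $f$ is then the required weak solution, and the moment bound is preserved by Fatou's lemma.
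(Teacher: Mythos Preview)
Your overall strategy---regularise, solve the regularised system by a Dobrushin-type fixed point, prove uniform moment bounds via Proposition~\ref{prop:moment-propagation}, extract a limit by compactness, and pass to the limit in the weak formulation---is exactly the paper's. However, the specific regularisation you write down has a genuine defect that breaks the argument at the second step.

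With the single mollification $\Delta U^\e = e^{U^\e} - \chi_\e \ast \rho_{f^\e}$ and force $E^\e = -\nabla U^\e$, the system does \emph{not} have a conserved energy. Computing the kinetic energy balance gives
\[
\frac{\di}{\di t}\,\frac{1}{2}\int |v|^2 f^\e \,\di x\,\di v \;=\; -\int_{\bt^d} U^\e\,\partial_t \rho_{f^\e}\,\di x,
\]
whereas differentiating any candidate potential energy (built from $U^\e$, which depends on $\rho$ only through $\chi_\e\ast\rho$) produces $-\int (\chi_\e\ast U^\e)\,\partial_t\rho_{f^\e}$. The mismatch $\int(U^\e-\chi_\e\ast U^\e)\,\partial_t\rho_{f^\e}$ has no sign and does not vanish. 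Since your uniform $L^{(d+2)/d}$ bound on $\rho_{f^\e}$ comes from Lemma~\ref{lem:rho-Lp}, which in turn requires a bound on the energy, this gap propagates: without the second-moment control you cannot bound $\widehat E^\e$ uniformly via Proposition~\ref{prop:regU}, and the moment propagation of Proposition~\ref{prop:moment-propagation} cannot start.

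The paper repairs this by using a \emph{double} regularisation: in addition to mollifying $\rho$ in the elliptic equation, the force is taken to be $E_r = -\chi_r\ast\nabla U_r$, so that for instance $\bar E_r = \chi_r\ast\chi_r\ast K\ast\rho$. This symmetrisation restores an exactly conserved energy $\mathcal{E}_r$ (see \eqref{def:energy-reg}), and $\mathcal{E}_r[f_0]$ is bounded uniformly in $r$ since mollification only decreases the $L^{(d+2)/d}$ norm of $\rho_{f_0}$. Once this is in place, the rest of your outline is essentially correct.

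A smaller point: Lemma~\ref{lem:mom-interpolation} is an interpolation inequality, not a compactness statement, so it does not by itself upgrade narrow convergence of $f^\e$ to strong $L^p$ convergence of $\rho_{f^\e}$. The paper instead uses only \emph{weak} $L^p$ convergence of $\rho$, obtains strong $L^2$ convergence of $\nabla U_r$ from the uniform $W^{2,2}$ bound on $\bar U_r$ together with the $C^{1,\alpha}$ bound on $\widehat U_r$, and this (combined with weak convergence of $f^{(r)}$) suffices to pass to the limit in the product $E_r f^{(r)}$ in the weak formulation.
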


To prove this theorem, we first consider a regularised system for which unique global solutions can be constructed. Then, using the a priori estimate from Section~\ref{sec:moments}, we extract a subsequential limit from the regularised solutions, and show that the limit is a weak solution of VPME \eqref{vpme} with the desired moments bounded.

\subsection{Regularised VPME}

We introduce a regularised version of \eqref{vpme}. We define a scaled mollifier $\chi_r$ by letting
\begin{align} \label{Def_chi}
\chi_r(x) = r^{-d} \chi \left ( \frac{x}{r} \right ) , \qquad r \in (0, \frac{1}{4} ]
\end{align}
Here $\chi: \bt^d \to \br$ is a fixed smooth function with support contained in the unit ball. We assume further that $\chi$ is radially symmetric, non-negative and has total mass 1. We then consider the following regularised system:
\begin{equation}
\label{vpme-reg}
 \left\{ \begin{array}{ccc}\pt_t f^{(r)}+v\cdot \nabla_x f^{(r)}+ E_r[f^{(r)}] \cdot \nabla_v f^{(r)} =0,  \\
E_r =- \chi_r \ast \nabla U_r, \\
\Delta U_r=e^{U_r} - \chi_r \ast \rho[f^{(r)}] ,\\
f^{(r)}\vert_{t=0}=f_0 \ge0,\ \  \int_{\bt^d \times \br^d} f_0 \,dx\,dv=1.
\end{array} \right.
\end{equation}

We regularise the ion density but not the electron density, the idea being that the thermalisation assumption should lead to a regularising effect. This is a slightly different approach from that of Bouchut \cite{Bouchut}, where both densities are regularised.

We introduce the decomposition
\be
E_r = \bar E_r + \widehat E_r,
\ee
where
\be
\bar E_r = - \chi_r \ast \nabla \bar U_r, \qquad \widehat E_r = - \chi_r \ast \nabla \widehat U_r,
\ee
with $\bar U_r, \, \widehat U_r$ satisfying
\be
\Delta \bar U_r = 1 - \chi_r \ast \rho[f^{(r)}], \qquad \Delta \widehat U_r = e^{\bar U_r + \widehat U_r} - 1 .
\ee

Notice that we are using a technique of `double regularisation'; for instance, 
$\bar E_r$ can be represented in the form
\be
\bar E_r = \chi_r \ast \chi_r \ast K \ast \rho[f^{(r)}] .
\ee
This type of regularisation appeared in the work of Horst \cite{Horst}, and has subsequently been used in many other contexts. An advantage of this approach is that the system \eqref{vpme-reg} has an associated conserved energy, defined by
\be \label{def:energy-reg}
\mc{E}_r [f] : = \frac{1}{2} \int_{\bt^d \times \br^d} |v|^2 f \di x \di v + \frac{1}{2} \int_{\bt^d} \lvert \nabla U_r \rvert^2 \di x + \int_{\bt^d} U_r e^{U_r} \di x .
\ee
If $f^{(r)}$ converges to some $f$ sufficiently strongly as $r$ tends to zero, then we would expect $\mc{E}_r [f^{(r)}]$ to converge to $\mc{E}[f]$, where $\mc{E}$ is the energy of the original VPME system, defined in \eqref{def:Ee}.

The methods of Dobrushin \cite{Dob} may be used to construct solutions to this regularised system since the force-field is sufficiently regular. Dobrushin's results cannot be applied directly since the force is not of convolution type, but the method can be adapted to our case.

\begin{lem}[Existence of regularised solutions] \label{lem:exist-vpme-reg}
For every $f_0 \in \mc{P}(\bt^d \times \br^d)$, there exists a unique solution $f^{(r)} \in C([0,\infty) ; \mc{P}(\bt^d \times \br^d))$ of \eqref{vpme-reg}. If $f_0 \in L^p(\bt^d \times \br^d)$ for some $p \in [1, \infty]$, then for all $t$
\be
\lVert f^{(r)}(t) \rVert_{L^p(\bt^d \times \br^d)} \leq \lVert f_0 \rVert_{L^p(\bt^d \times \br^d)} .
\ee
\end{lem}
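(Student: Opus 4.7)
The plan is to construct $f^{(r)}$ via a Banach fixed point argument in the space $C([0,T];(\mc{P}_1(\bt^d\times\br^d),W_1))$ for $T$ small, exploiting the fact that the mollification by $\chi_r$ turns the force field into a uniformly smooth, bounded map of the density. The key preliminary observation is the following: given any $\nu\in\mc{P}(\bt^d)$, the elliptic problem $\Delta U_r = e^{U_r}-\chi_r\ast\nu$ admits a unique solution $U_r[\nu]\in W^{1,2}(\bt^d)$ by Proposition~\ref{prop:regU} (applied with $h=\chi_r\ast\nu$, which lies in $L^\infty\cap L^{(d+2)/d}$ with norms controlled by $\|\chi_r\|_{L^\infty}$ and $\|\chi_r\|_{L^{(d+2)/d}}$), and, using Young's inequality and the stability estimate of Proposition~\ref{prop:Ustab}, one obtains
$$
\|E_r[\nu_1]-E_r[\nu_2]\|_{L^\infty(\bt^d)}\le \|\nabla\chi_r\|_{L^\infty}\,\|U_r[\nu_1]-U_r[\nu_2]\|_{L^1}\le C_r\, W_1(\nu_1,\nu_2),
$$
together with the uniform bounds $\|E_r[\nu]\|_{C^1(\bt^d)}\le C_r$, all constants depending only on $r$.

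First I would define the solution operator $\mc{T}$ on the complete metric space $X_T:=C([0,T];(\mc{P}_1(\bt^d\times\br^d),W_1))$ as follows: given $\mu\in X_T$, the field $\tilde E(t,x):=E_r[\rho[\mu(t)]](x)$ is measurable in $t$ and $C^1$-uniformly bounded in $x$, so the ODE system
$$
\dot X=V,\qquad \dot V=\tilde E(t,X),\qquad (X,V)|_{t=0}=(x,v),
$$
generates a unique Lipschitz flow $\Phi_t^\mu$ on $\bt^d\times\br^d$. I then set $\mc{T}(\mu)(t):=(\Phi_t^\mu)_\#\, f_0$. A Gronwall argument on the characteristic ODEs, coupled with the Lipschitz stability of $E_r$ stated above, yields an estimate of the form
$$
W_1\bigl(\mc{T}(\mu_1)(t),\mc{T}(\mu_2)(t)\bigr)\le C_r\, t\, e^{C_r t}\,\sup_{s\in[0,t]}W_1(\mu_1(s),\mu_2(s)),
$$
so that $\mc{T}$ is a contraction on $X_T$ for $T=T(r)$ sufficiently small. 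The Banach fixed point theorem produces a unique fixed point $f^{(r)}\in X_T$, which by construction satisfies \eqref{vpme-reg} in the distributional sense with initial datum $f_0$. Since the estimates depend only on $r$ (not on the size of the solution) and probability is conserved, one iterates on $[T,2T],\,[2T,3T],\ldots$ to obtain a global-in-time solution.

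For the $L^p$ propagation, I would use that along the fixed point the characteristic vector field $(v,E_r[f^{(r)}(t)](x))$ is independent of $v$ in its first component's $x$-dependence and has zero $v$-divergence, hence is divergence-free on $\bt^d\times\br^d$. Consequently the flow $\Phi_t^{f^{(r)}}$ is volume-preserving, so that, if $f_0\in L^p$, the pushforward representation
$$
\int\phi(x,v)\,f^{(r)}(t,x,v)\,dx\,dv=\int\phi\bigl(\Phi_t^{f^{(r)}}(x,v)\bigr)f_0(x,v)\,dx\,dv
$$
combined with the change-of-variables formula gives $\|f^{(r)}(t)\|_{L^p}=\|f_0\|_{L^p}$ for $1\le p<\infty$, and the analogous bound $\|f^{(r)}(t)\|_{L^\infty}\le\|f_0\|_{L^\infty}$ by the essential-supremum version of the same identity. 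The main technical obstacle is the first step, namely obtaining a Lipschitz stability estimate for the nonlinear elliptic problem $\Delta U_r=e^{U_r}-\chi_r\ast\nu$ with constants depending only on $r$; this is the reason we cannot invoke Dobrushin's theorem directly (the force is not of convolution type in $\rho$), but it is precisely what Proposition~\ref{prop:Ustab} provides once one uses that $\chi_r\ast\nu$ has $L^\infty$ and $L^{(d+2)/d}$ norms controlled in terms of $r$ alone.
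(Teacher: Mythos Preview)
Your overall architecture---Banach fixed point in $C([0,T];(\mc{P}_1,W_1))$, based on uniform $C^1$ bounds and a Lipschitz stability estimate for the mollified force, followed by $L^p$ propagation via the divergence-free characteristic flow---is exactly the paper's approach. However, there is a real gap in the step where you claim
\[
\|E_r[\nu_1]-E_r[\nu_2]\|_{L^\infty}\le C_r\,W_1(\nu_1,\nu_2)
\]
follows from Proposition~\ref{prop:Ustab}. That proposition is a Loeper-type estimate and controls $\|\nabla U_1-\nabla U_2\|_{L^2}$ by $W_2(h_1,h_2)$, not $W_1$. On the bounded torus one only has $W_2^2\le C\,W_1$, so feeding Proposition~\ref{prop:Ustab} into your chain yields at best $\|E_r[\nu_1]-E_r[\nu_2]\|_{L^\infty}\le C_r\sqrt{W_1(\nu_1,\nu_2)}$. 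A H\"older-$1/2$ modulus is not enough for a contraction (think of the non-unique ODE $\dot y=\sqrt y$), so the fixed-point step would fail as written.

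The paper closes this gap by \emph{not} going through the $W_2$ estimate for the singular part. Instead it uses that $\bar U_r^{(\mu)}=(G\ast\chi_r)\ast\rho_\mu+{\rm const}$, where $G\ast\chi_r$ is a \emph{Lipschitz} kernel; Kantorovich duality for $W_1$ then gives directly
\[
\|\bar U_r^{(\mu)}-\bar U_r^{(\nu)}\|_{L^\infty}\le {\rm Lip}(G\ast\chi_r)\,W_1(\rho_\mu,\rho_\nu)\le C_r\,W_1(\mu,\nu).
\]
For the nonlinear part one then invokes Lemma~\ref{lem:hatU-stab} (the raw $L^2$ energy estimate, not the packaged Proposition~\ref{prop:Ustab}) to obtain $\|\nabla\widehat U_r^{(\mu)}-\nabla\widehat U_r^{(\nu)}\|_{L^2}\le C_r\|\bar U_r^{(\mu)}-\bar U_r^{(\nu)}\|_{L^2}\le C_r W_1(\mu,\nu)$, and a final convolution with $\chi_r$ upgrades this to $L^\infty$. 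With this correction, your contraction estimate and the rest of your argument go through.
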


\begin{proof}
We sketch the proof, which is a modification of the methods of \cite{Dob} in order to handle the extra term in the electric field.
First consider the linear problem for fixed $\mu \in C([0,\infty) ; \mc{P}(\bt^d \times \br^d))$:
\begin{equation}
\label{lin-vpme-reg}
 \left\{ \begin{array}{ccc}\pt_t g^{(\mu)}_r+v\cdot \nabla_x g^{(\mu)}_r+ E_r[\mu] \cdot \nabla_v g^{(\mu)}_r=0,  \\
E^{(\mu)}_r =- \chi_r \ast \nabla U^{(\mu)}_r, \\
\Delta U^{(\mu)}_r=e^{U^{(\mu)}_r} - \chi_r \ast \rho[\mu] ,\\
g^{(\mu)}_r\vert_{t=0}=f_0\ge0,\ \  \int_{\bt^d \times \br^d} f_0(\di x \di v)=1,
\end{array} \right.
\end{equation}
for $f_0 \in \mc{P}(\bt^d \times \br^d)$. Observe that even when $\mu$ is a singular probability measure, $\chi_r \ast \rho[\mu]$ is a function satisfying 
\be \label{rho-mu-conv-linfty}
| \chi_r \ast \rho[\mu]| \leq \lVert \chi_r \rVert_{L^{\infty}(\bt^d)} .
\ee
Then by Proposition~\ref{prop:regU},
\be
\| U^{(\mu)}_r \|_{C^1(\bt^d)} \leq \exp{\left [ C_d\,\(1 + \lVert \chi_r \rVert_{L^{\infty}(\bt^d)}\) \right ]} ,
\ee
and hence $E^{(\mu)}_r = \chi_r \ast \nabla U^{(\mu)}_r$ is of class $C^1(\bt^d)$, with the uniform-in-time estimate
\be \label{nabU-reg}
\lVert E^{(\mu)}_r \rVert_{C^1(\bt^d)} \leq \lVert \chi_r \rVert_{C^1(\bt^d)}
\lVert \nabla U^{(\mu)}_r \rVert_{C(\bt^d)} \leq \lVert \chi_r \rVert_{C^1(\bt^d)} \exp{\left [ C_d \(1 + \lVert \chi_r \rVert_{L^{\infty}(\bt^d)}\) \right ]} \leq C_{r,d}\, .
\ee
This implies the existence of a unique global-in-time $C^1$ characteristic flow. Using this flow we may construct a unique solution $g^{(\mu)}_r \in C([0,\infty) ; \mc{P}(\bt^d \times \br^d))$ to the linear problem \eqref{lin-vpme-reg} by the method of characteristics. Since the vector field $(v, E_r)$ is divergence free, this solution conserves $L^p(\bt^d \times \br^d)$ norms for $p \in [1, \infty]$.

To prove existence for the nonlinear equation, we use a fixed point argument via a contraction estimate in Wasserstein sense, as in \cite{Dob}. To prove the required contraction estimate, it is enough to show that the electric field $E^{(\mu)}_r$ is Lipschitz and has a stability property in $W_1$ with respect to $\mu$:
\begin{align} \label{Er-reg}
\| E^{(\mu)}_r \|_{\text{Lip}} & \leq C_r \\ \label{Er-stab}
\| E^{(\mu)}_r - E^{(\nu)}_r\|_{L^\infty(\bt^d)} & \leq C_r W_1(\mu, \nu) .
\end{align}
The Lipschitz regularity \eqref{Er-reg} holds by \eqref{nabU-reg}. For the stability \eqref{Er-stab}, once again we use the decomposition $E^{(\mu)}_r = \bar E^{(\mu)}_r + \widehat E^{(\mu)}_r$. First, 
$$
\bar E^{(\mu)}_r = - \chi_r \ast \nabla \bar U^{(\mu)}_r = \chi_r \ast K \ast \chi_r \ast \rho[\mu],
$$
where $K$ is the Coulomb kernel as defined by $K = \nabla G$ for $G$ satisfying \eqref{def:G}.
This is a force of convolution type, with a Lipschitz kernel since $K \in L^1(\bt^d)$ and $\chi_r$ is smooth, so the required stability estimate is proved in \cite{Dob}. It remains to verify stability of $\widehat E_r$ with respect to $\mu$. 

\noindent Consider two continuous paths of probability measures $\mu, \nu \in C([0,\infty) ; \mc{P}(\bt^d \times \br^d))$. First note that by Young's inequality,
\be
\| \widehat E^{(\mu)}_r - \widehat E^{(\nu)}_r \|_{L^\infty(\bt^d)}  = \lVert \chi_r \ast ( \nabla \widehat U^{(\mu)}_r - \nabla \widehat U^{(\nu)}_r ) \rVert_{L^{\infty}(\bt^d)}  \leq \lVert \chi_r \rVert_{L^2(\bt^d)} \lVert \nabla \widehat U^{(\mu)}_r - \nabla \widehat U^{(\nu)}_r \rVert_{L^{2}(\bt^d)} .
\ee
By the $L^2$ stability estimate from Lemma~\ref{lem:hatU-stab}, 
\be 
\lVert \nabla \widehat U^{(\mu)}_r - \nabla \widehat U^{(\nu)}_r \rVert_{L^{2}(\bt^d)} \leq \exp{\left [ C \left ( \max_{\gamma \in \{\mu, \nu\}}\, \lVert \bar{U}^{(\gamma)}_r\rVert_{L^{\infty}(\bt^d)}  + \max_{\gamma \in \{\mu, \nu\}} \, \lVert \widehat{U}^{(\gamma)}_r\rVert_{L^{\infty}(\bt^d)}   \right )\right ]} \, \lVert  \bar U^{(\mu)}_r -  \bar U^{(\nu)}_r \rVert_{L^{2}(\bt^d)} .
\ee
By Proposition~\ref{prop:regU},
\be
 \max_{\gamma \in \{\mu, \nu\}}\,\lVert \bar{U}^{(\gamma)}_r\rVert_{L^{\infty}(\bt^d)}  + \max_{\gamma \in \{\mu, \nu\}}\, \lVert \widehat{U}^{(\gamma)}_r\rVert_{L^{\infty}(\bt^d)} \leq  \exp{\left [ C_d \left (1 + \lVert \chi_r \rVert_{L^{\infty}(\bt^d)} \right ) \right ]} .
\ee
Hence
\begin{align}
\lVert \nabla \widehat U^{(\mu)}_r - \nabla \widehat U^{(\nu)}_r \rVert_{L^{2}(\bt^d)} & \leq C_{r,d}\,\,  \lVert  \bar U^{(\mu)}_r -  \bar U^{(\nu)}_r \rVert_{L^{2}(\bt^d)} \\
& \leq C_{r,d}\,\,  \lVert  \bar U^{(\mu)}_r -  \bar U^{(\nu)}_r \rVert_{L^{\infty}(\bt^d)}  =  C_{r,d}\,\,  \lVert \chi_r \ast_x G \ast_x \(\rho[\mu] - \rho[\nu]\) \rVert_{L^{\infty}(\bt^d)}   .
\end{align}
Note that $\chi_r \ast_x G$ is smooth and hence Lipschitz. By Kantorovich duality for the $W_1$ distance we have
\be
W_1(\rho_\mu, \rho_\nu) = \sup_{\|\phi\|_{\text{Lip}} \leq 1} \left \{ \int_{\bt^d} \phi \di \rho_\mu - \int_{\bt^d} \phi \di \rho_\nu \right \} .
\ee
Thus for any $x \in \mathbb T^d$
\begin{align}
\chi_r \ast_x G \ast_x (\rho_\mu - \rho_\nu)(x)
&=\int_{\mathbb T^d}[\chi_r \ast_x G]  (x-y)\, d(\rho_\mu - \rho_\nu)(y)\\
&\leq \|\chi_r \ast_x G  (x-\cdot)\|_{{\rm Lip}} W_1(\rho_\mu, \rho_\nu)\leq C_{r,d}\, W_1(\rho_\mu, \rho_\nu),
\end{align}
where $C_{r,d}$ is independent of $x$.
Hence
$$
 \lVert \chi_r \ast_x G \ast_x (\rho_\mu - \rho_\nu) \rVert_{L^{\infty}(\bt^d)} \leq C_{r,d}\, W_1(\rho_\mu, \rho_\nu) .
$$
We conclude that
\begin{align}
\lVert \chi_r \ast ( \nabla \widehat U^{(\mu)}_r - \nabla \widehat U^{(\nu)}_r ) \rVert_{L^{\infty}(\bt^d)} \leq C_{r,d}\, W_1(\rho_\mu, \rho_\nu)   \leq C_{r,d}\, W_1(\mu, \nu)    ,
\end{align}
which shows that \eqref{Er-stab} holds.

Using the methods of \cite{Dob}, we can show that the estimates \eqref{Er-reg} and \eqref{Er-stab} imply a Wasserstein stability estimate:
\be
W_1\(g^{(\mu)}_r(t), g^{(\nu)}_r(t)\) \leq \int_0^t W_1(\mu(t), \nu(t)) \exp(C_r (t-s)) \di s .
\ee
Since $C_r$ is independent of time, a Picard iteration proves the existence of a unique solution $f^{(r)} \in C([0, \infty) ; \mc{P}(\bt^d \times \br^d))$ for the nonlinear regularised equation \eqref{vpme-reg}.
 {This solution also preserves all $L^p(\bt^d \times \br^d)$ norms, since it is the classical solution of a linear transport equation 
with divergence-free vector field $(v, E_r[f^{(r)}])$.}

\end{proof}

\subsection{Compactness}

Next, we show that the approximate solutions $f^{(r)}$ converge to a limit as $r$ tends to zero, and that this limit may be identified as the unique bounded density solution of \eqref{vpme} with data $f_0$. In the following lemma, we collect together some useful uniform estimates for the approximate solutions $f^{(r)}$.

\begin{lem} \label{lem:fr-bounds}
Let $f_0 \in L^1\cap L^\infty(\bt^d \times \br^d)$ have a finite velocity moment of order $m_0 > d$, that is, $M_{m_0}(0) < +\infty$. For each $r > 0$, let $f^{(r)}$ denote the solution of \eqref{vpme-reg} with initial datum $f_0$. Then $f^{(r)}$ have the following properties:
\begin{enumerate}[(i)]
\item $L^p$ bounds: for all $p \in [1, \infty]$,
\be \label{vpme-reg-Lp}
\sup_r \sup_{t \in [0,T]} \| f^{(r)}(t) \|_{L^p(\bt^d \times \br^d)} \leq \| f_0 \|_{L^p(\bt^d \times \br^d)} .
\ee
\item Moment bounds:
\begin{align} \label{f-reg-mom}
&\sup_r \sup_{t \in [0,T]} \int_{\bt^d \times \br^d} |v|^2 f^{(r)}(t,x,v) \di x \di v \leq C(T,f_0),\\
& \sup_r \sup_{t \in [0,T]} \int_{\bt^d \times \br^d} |v|^{m_0} f^{(r)}(t,x,v) \di x \di v \leq C(T,f_0) . 
\end{align}
\item Density bounds: for all $r$ and all $t \in [0,T]$,
\begin{align}
\label{rho-reg-bdd}
\sup_r \| \rho[f^{(r)}](t, \cdot) \|_{L^{\frac{d+2}{d}}(\bt^d)} \leq C(f_0), \qquad
\sup_r \| \rho[f^{(r)}](t, \cdot) \|_{ L^ 2(\bt^d)} \leq C(T, f_0) .
\end{align}
\item Regularity of the electric field: for any $\alpha \in (0,1)$,

\begin{align} \label{elec-reg}
&\sup_{r} \sup_{t \in [0,T]} \| \widehat U_r(t) \|_{C^{1, \alpha}(\bt^d)} \leq C(\alpha, f_0),\\
& \sup_{r} \sup_{t \in [0,T]} \| \bar U_r(t) \|_{C^{0, \alpha}(\bt^d)} \leq C(\alpha, f_0), \\
&\sup_{r} \sup_{t \in [0,T]} \| \bar U_r(t) \|_{W^{2,2}(\bt^d)} \leq C(T, f_0) .
\end{align}

\item Equicontinuity in time into $W^{-1, 2}$: for any $t_1<t_2,$
\be \label{vpme-reg-equi}
\|f^{(r)}({t_2})-f^{(r)}({t_1})\|_{W^{-1,2}(\mathbb T^d\times \mathbb R^d)} \le C(f_0) \, |t_2-t_1|,
\ee
where $W^{-1,2}(\mathbb T^d\times \mathbb R^d)$ denotes the dual of $W^{1,2}(\mathbb T^d\times \mathbb R^d)$. 
\end{enumerate}
\end{lem}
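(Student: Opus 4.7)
Item (i) is already contained in the proof of Lemma~\ref{lem:exist-vpme-reg}: $f^{(r)}$ satisfies the linear transport equation \eqref{vpme-lin} with the divergence-free $C^1$ field $(v, E_r[f^{(r)}])$, which preserves all $L^p$ norms. For (ii), the key point is that the regularised system \eqref{vpme-reg} admits the conserved energy $\mc{E}_r$ defined in \eqref{def:energy-reg}. Together with the elementary inequality $ue^u \geq -e^{-1}$, conservation of $\mc{E}_r$ yields the uniform second-moment estimate $M_2(t) \leq 2\bigl(\mc{E}_r[f_0] + e^{-1}\bigr)$, and $\mc{E}_r[f_0]$ is itself bounded uniformly in $r$ since $\|\chi_r \ast \rho_{f_0}\|_{L^{(d+2)/d}} \leq \|\rho_{f_0}\|_{L^{(d+2)/d}}$ by Young's inequality. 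For the $m_0$-moment I would rerun the arguments of Section~\ref{sec:moments} directly on $f^{(r)}$; those arguments were stated as a priori estimates, and they carry over once one observes that Proposition~\ref{prop:regU} applied with $h = \chi_r \ast \rho[f^{(r)}]$ furnishes uniform-in-$r$ bounds on $\widehat U_r$ and $\bar U_r$, and that the twice-mollified Coulomb kernel $\chi_r \ast \chi_r \ast K$ is still pointwise dominated by $C(1+|x|^{-(d-1)})$ uniformly in $r$, so the Chen-Chen-type estimate of Lemma~\ref{lem:Q-controls-sing} applies with $r$-independent constants.

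Items (iii) and (iv) then follow quickly. Lemma~\ref{lem:rho-Lp} applied to $f^{(r)}$ (using (i) and the second-moment bound) gives the $L^{(d+2)/d}$ estimate, while Lemma~\ref{lem:mom-interpolation} with $k=0$, $m=m_0$ gives $\|\rho[f^{(r)}]\|_{L^{(m_0+d)/d}} \leq C$; since $m_0>d$ we have $(m_0+d)/d > 2$, so by embedding on the torus we obtain the uniform $L^2$ bound. Item (iv) is then a direct application of Proposition~\ref{prop:regU} with $h = \chi_r \ast \rho[f^{(r)}]$: the $L^{(d+2)/d}$ bound from (iii) gives the uniform H\"older estimates, while the $W^{2,2}$ bound on $\bar U_r$ follows from standard Calder\'on-Zygmund estimates applied to $\Delta \bar U_r = 1 - \chi_r \ast \rho[f^{(r)}]$ using the $L^2$ bound from (iii).

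The bulk of the work is (v). I would test the weak form of the Vlasov equation against an arbitrary $\phi \in W^{1,2}(\bt^d \times \br^d)$: after integration by parts,
\[
\int_{\bt^d \times \br^d} \phi \bigl(f^{(r)}(t_2) - f^{(r)}(t_1) \bigr) \, dx \, dv = \int_{t_1}^{t_2} \int_{\bt^d \times \br^d} \bigl( v f^{(r)} \cdot \nabla_x \phi + E_r f^{(r)} \cdot \nabla_v \phi \bigr) \, dx \, dv \, dt .
\]
The first term is handled by Cauchy-Schwarz with $\||v|f^{(r)}\|_{L^2}^2 \leq \|f_0\|_{L^\infty} M_2 \leq C(f_0)$. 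For the second, $\|E_r f^{(r)}\|_{L^2}^2 \leq \|f_0\|_{L^\infty} \int |E_r|^2 \rho[f^{(r)}] \, dx \leq \|f_0\|_{L^\infty} \|E_r\|_{L^4}^2 \|\rho[f^{(r)}]\|_{L^2}$ by H\"older; the $L^4$ norm of $E_r$ is uniformly bounded by Sobolev embedding from a $W^{2,2}$ estimate on $U_r$ (using that $e^{U_r} \in L^\infty$ uniformly from (iv) and $\chi_r \ast \rho[f^{(r)}] \in L^2$ uniformly from (iii)). Dividing by $t_2 - t_1$ and taking the supremum over $\|\phi\|_{W^{1,2}} \leq 1$ yields \eqref{vpme-reg-equi}. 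The principal obstacle is (ii): one must faithfully reproduce the moment-propagation scheme of Section~\ref{sec:moments} --- most delicately the Chen-Chen argument in three dimensions --- while certifying that every constant is independent of $r$. This is possible precisely because the double mollification in $E_r$ both strictly regularises the field and preserves every $L^p$-norm of the density used in the estimates.
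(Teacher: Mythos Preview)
Your proposal is essentially correct and follows the same overall route as the paper: (i) from the divergence-free transport, (ii) from energy conservation plus a rerun of the moment-propagation scheme of Section~\ref{sec:moments} (with the key observation that the mollified kernel is still dominated by $|x|^{-(d-1)}$ uniformly in $r$, which the paper records as Lemma~\ref{lem:reg-kernel}), and (iii)--(iv) from interpolation and Proposition~\ref{prop:regU}.

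The one place where your argument diverges from the paper is (v), and there your route is slightly weaker than needed. You estimate $\int |E_r|^2 f^{(r)}$ via $\|E_r\|_{L^4}^2 \|\rho[f^{(r)}]\|_{L^2}$; both factors carry the $T$-dependent constants from (iii) and (iv), so you end up with $C(T,f_0)\,|t_2-t_1|$ rather than the $C(f_0)\,|t_2-t_1|$ stated in \eqref{vpme-reg-equi}. The paper avoids this by going through the conserved energy directly: after the same Cauchy--Schwarz split one writes
\[
\int_{\bt^d\times\br^d} |E_r|^2 f^{(r)} \,\di x\,\di v \leq \|f^{(r)}\|_{L^\infty} \int_{\bt^d} |E_r|^2 \,\di x \leq \|f_0\|_{L^\infty} \int_{\bt^d} |\nabla U_r|^2 \,\di x,
\]
and both $\int |v|^2 f^{(r)}$ and $\int |\nabla U_r|^2$ are bounded by $\mc{E}_r[f^{(r)}(t)] + e^{-1} = \mc{E}_r[f_0] + e^{-1}$, which is $T$-independent. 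This is both simpler and gives the sharper constant matching the statement. (For the application to Arzel\`a--Ascoli on $[0,T]$ your bound would of course suffice, but it does not prove the lemma as written.)

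A minor variant in (iii): you interpolate with $m=m_0$ to get $L^{(m_0+d)/d}\hookrightarrow L^2$, whereas the paper takes $m=d$ directly to land in $L^2$; both are fine.
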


\begin{proof}
Property (i) was proved in Lemma~\ref{lem:exist-vpme-reg}. For Property (ii), the second moment bound is a consequence of the conservation of the energy functional $\mc{E}_r [f^{(r)}]$ defined by \eqref{def:energy-reg}, once we check that $\mc{E}_r[f_0]$ is bounded uniformly in $r$.

Since $f_0$ is in $L^\infty(\bt^d \times \br^d)$ and has a finite $m_0$-th order moment in velocity for some $m_0>d$, the second moment of $f_0$ is also finite. Then, by Lemma~\ref{lem:mom-interpolation} it follows that $\rho_{f_0} \in L^{\frac{d+2}{d}}(\TT^d)$. Since $\| \chi_r \ast \rho_{f_0} \|_{L^{\frac{d+2}{d}}(\bt^d)} \leq \|\rho_{f_0} \|_{L^{\frac{d+2}{d}}(\bt^d)}$, 
by Proposition~\ref{prop:regU} we have
\be
\| U_r(0) \|_{L^\infty(\bt^d)} \leq C\(\|\rho_{f_0}\|_{L^{\frac{d+2}{d}}(\bt^d)}\) ,
\ee
and so
\be
\| \Delta U_r(0) \|_{L^{\frac{d+2}{d}}(\TT^d)} \leq C\(\|\rho_{f_0}\|_{L^{\frac{d+2}{d}}(\bt^d)}\) .
\ee
By regularity estimates for the Poisson equation,
\be
\| U_r(0) \|_{W^{2,\frac{d+2}{d}}(\TT^d)} \leq C\(\|\rho_{f_0}\|_{L^{\frac{d+2}{d}}(\bt^d)}\) .
\ee
A Sobolev inequality then implies that
\be
\| \nabla U_r(0) \|_{L^{p(d)}(\TT^d)} \leq C\(\|\rho_{f_0}\|_{L^{\frac{d+2}{d}}(\bt^d)}\),
\ee
where $p(2)$ may be any $p < +\infty$, and $p(3) = \frac{15}{4}$. In either case it follows that
\be
\| \nabla U_r(0) \|_{L^{p(d)}(\TT^d)} \leq C\(\|\rho_{f_0}\|_{L^{\frac{d+2}{d}}(\bt^d)}\) .
\ee
Altogether it follows that there exists $C_0$ such that
\be \label{vpmereg-energy-initial}
\sup_{r} \sup_{t \in [0,T]} \mc{E}_r[f_0] \leq C_0,
\ee
therefore
\be \label{vpmereg-energy}
\sup_{r} \sup_{t \in [0,T]} \mc{E}_r[f^{(r)}(t)] \leq C_0.
\ee
Note that $x e^x \geq -e^{-1}$. This implies that for all $r$ and all $t \in [0,T]$,
\be \label{vpmereg-mom2}
\int_{\bt^d \times \br^d} |v|^2 f^{(r)}(t) \di x \di v \leq C,
\ee
which completes the proof of the first part of Property (ii).

The estimate on the moment of order $m_0$ is proven using the same arguments as used for Proposition~\ref{prop:moment-propagation}.
For the two dimensional case, since $\| E_r \|_{L^p(\TT^d)} \leq \| \nabla U_r \|_{L^p(\TT^d)}$ and $\| \chi_r \ast \rho [f^{(r)}] \|_{L^p(\TT^d)} \leq \|\rho[f^{(r)}] \|_{L^p(\TT^d)}$ for any $p \in [1, +\infty]$, the same proof follows through. For the three dimensional case, instead of the quantities $P(t,\delta)$ consider
\be
Q^{(r)}(t,\delta) : = \sup_{x,v \in \TT^3 \times \RR^3} \int_{t-\delta}^t |E_r (X_r(s; 0,x,v))| \di s,
\ee
where $(X_r(t; s,x,v), V_r(t;s,x,v))$ denotes the characteristic flow induced by the vector field $(v, E_r(x))$. $Q^{(r)}$ is well-defined due to estimate \eqref{nabU-reg} which shows that $E_r \in L^\infty([0, +\infty) \times \TT^3)$, and so $Q^{(r)}(t,\delta) \leq C_{r} \delta$ for some constant $C_{r}$. Thus observe that the flow $(X_r(t; s,x,v), V_r(t;s,x,v))$ satisfies Assumption~\ref{hyp:flow}. Therefore, the conclusion of Lemma~\ref{lem:Q-controls-sing} applies for this flow:
	\begin{multline}
	\int_{t-\delta}^t \int_{\mc{Q}_3} \frac{1}{|X_r(s; 0,x,v) - y|^2} \rho_{f^{(r)}}(s,y) \di y  \di s \\
	\leq C \left ( \delta Q^{(r)}(t,\delta)^{4/3} + \delta^{1/2} (1 + \delta Q^{(r)}(t,\delta))^{1/2} Q^{(r)}(t,\delta)^{-1/2} M_{3+\e}(t)^{1/2} \right ).
	\end{multline}
	To bound $E_r$, we use the decomposition
\be
|E_r(X_r(s))| \leq C \int_{\mc{Q}_3}|K_r \ast \rho_{f^{(r)}}(s,y)| \di y + |K_0 \ast \rho_{f^{(r)}}(X_r(s))|  +  |\widehat E_r(X_r(s))|,
\ee
where $K_r$ denotes the following regularisation of the singular part of the kernel:
\be
K_r = \chi_r \ast \chi_r \ast \frac{1}{|\cdot |^2} .
\ee
Proposition~\ref{prop:regU} implies that $\widehat E_r$ is uniformly bounded due to the conservation of energy.	
In Lemma~\ref{lem:reg-kernel} below, we show that $K_r$ is controlled by the unregularised kernel $|x|^{-2}$. Thus
\be
|E_r(X_r(s))| \leq C \int_{\mc{Q}_3} \frac{1}{|X_r(s; 0,x,v) - y|^2} \rho_{f^{(r)}}(s,y) \di y +  C ,
\ee
and so
\be
Q^{(r)}(t,\delta) \leq C \left ( \delta Q^{(r)}(t,\delta)^{4/3} + \delta^{1/2} (1 + \delta Q^{(r)}(t,\delta))^{1/2} Q^{(r)}(t,\delta)^{-1/2} M_{3+\e}(t)^{1/2} \right ).
\ee
By following the remainder of the proof of Lemma~\ref{lem:moment-propagation-3d}, we can deduce a bound on the velocity moment of order $m$ of $f^{(r)}$, which completes the proof of Property (ii).

Property (iii) then follows from Property (ii) after applying Lemma~\ref{lem:mom-interpolation} with $k=0$ and either $m=2$ or $m=d$.

For Property (iv), we first note that $\| \chi_r \ast \rho[f(t)] \|_{L^{p}(\bt^d)} \leq \|\rho [f(t)] \|_{L^{p}(\bt^d)}$ for any $p \in [1, + \infty]$. The $C^{1,\alpha}$ estimate on $\widehat U_r$ then follows directly from Proposition~\ref{prop:regU} and the $L^{\frac{d+2}{d}}$ estimate from Property (iii). The $W^{2,2}$ estimate on $\bar U_r$ follows from regularity properties of the Poisson equation and the $L^2$ estimate on $\rho_{f^{(r)}}$ from Property (iii).

Finally, we consider the equicontinuity in time. By the transport equation
$$
\partial_t f^{(r)} =-{\rm div}_{x,v}\(\bigl(v,E_r[{f^{(r)}(t)}]\bigr) f^{(r)}\)
$$
and the bounds \eqref{vpme-reg-Lp} and \eqref{vpmereg-energy}, for any function $\phi\in W^{1,2}(\mathbb T^d\times \mathbb R^d)$,
\begin{align*}
\biggl|\frac{d}{dt}\int_{\bt^d \times \br^d} \phi f^{(r)}(t)\,dx&\,dv\biggr|=\biggl|\int_{\bt^d \times \br^d} \nabla_{x,v} \phi\cdot \bigl(v,E_r[{f^{(r)}(t)}]\bigr) f^{(r)}(t)\,dx\,dv\biggr|
\\&\leq \biggl(\int_{\bt^d \times \br^d} |\nabla_{x,v} \phi|^2 f^{(r)}(t) \di x \di v \biggr)^{1/2} \biggl(\int_{\bt^d \times \br^d}(|v|^2+ |E_r[{f^{(r)}(t)}]|^2)f^{(r)}(t) \di x \di v\biggr)^{1/2}\\
&\leq \biggl(\int_{\bt^d \times \br^d} |\nabla_{x,v} \phi|^2 \di x \di v \biggr)^{1/2}\lVert f^{(r)}(t) \rVert_{L^{\infty}(\bt^d \times \br^d)}^{1/2} \\
& \quad \times \biggl(\int_{\bt^d \times \br^d} |v|^2f^{(r)}(t) \di x \di v +\lVert f^{(r)}(t) \rVert_{L^{\infty}(\bt^d \times \br^d)}\int_{\bt^d \times \br^d} |E_r[{f^{(r)}(t)}]|^2 \di x \di v \biggr)^{1/2} \\
&\leq \biggl(\int_{\bt^d \times \br^d} |\nabla_{x,v} \phi|^2 \di x \di v \biggr)^{1/2}\lVert f^{(r)}(t) \rVert_{L^{\infty}(\bt^d \times \br^d)} \\
& \quad \times \biggl(\int_{\bt^d \times \br^d} |v|^2f^{(r)}(t) \di x \di v + \int_{\bt^d \times \br^d} |\nabla U_r|^2 \di x \di v \biggr)^{1/2} \\
& \leq C \lVert f_0 \rVert_{L^{\infty}(\bt^d \times \br^d)} \, \( C + \mc{E}_r[f^{(r)}(t)]\) \, \|  \nabla \phi\|_{L^2(\mathbb T^d\times \mathbb R^d)} \\
&\leq C[f_0] \|\nabla_{x,v} \phi\|_{L^2(\mathbb T^d\times \mathbb R^d)}.
\end{align*}
This estimate means that 
$$
\|\pt_t f^{(r)}(t)\|_{W^{-1,2}(\mathbb T^d\times \mathbb R^d)}= \sup_{\|\phi\|_{W^{1,2}(\mathbb T^d\times \mathbb R^d)}\leq 1}\int_{\bt^d \times \br^d} \phi\, \partial_tf^{(r)}(t)\,dx\,dv \leq C,
$$
thus $\partial_t f^{(r)} \in L^\infty((0,T);W^{-1,2}(\mathbb T^d\times \mathbb R^d))$. 
Thus, for any $t_1<t_2,$
\be
\|f^{(r)}(t_2)-f^{(r)}(t_1)\|_{W^{-1,2}(\mathbb T^d\times \mathbb R^d)}\le \int_{t_1}^{t_2}\|\pt_t f^{(r)}(t)\|_{W^{-1,2}(\mathbb T^d\times \mathbb R^d)}\,dt\le C|t_2-t_1|,
\ee
which completes the proof of Property (v).
\end{proof}

The following lemma, used in the proof above, shows that the regularised kernel $K_r$ can be controlled by the function $|x|^{-(d-1)}$ near the singularity, uniformly in $r$.

\begin{lem}[Bounds on the regularised kernel] \label{lem:reg-kernel}
Let $d > 1$ and let $\chi_r$ be defined by \eqref{Def_chi} for some fixed $\chi \in C(\bt^d)$.
Then there exists a constant $C(d, \chi) > 0$, independent of $r$, such that, for all $x \in \bt^d$,
\be
| \chi_r \ast K (x) | \leq C(d, \chi) \left ( 1 + |x|^{-(d-1)} \right ) .
\ee

\end{lem}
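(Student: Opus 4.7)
The plan is to use the explicit decomposition of $K$ near its singularity from Lemma~\ref{lem:G} and then split the convolution into two regions depending on whether $|x|$ is large or small compared to the regularisation scale $r$. By Lemma~\ref{lem:G}, the Coulomb kernel satisfies $|K(z)| \leq C_d(1 + |z|^{-(d-1)})$ uniformly on $\bt^d$: the singular piece $-C_d\, z/|z|^d$ on $B_{1/4}(0)$ contributes the $|z|^{-(d-1)}$ term, the smooth remainder $-\nabla G_0$ is bounded there, and outside $B_{1/4}(0)$ the kernel $K$ is itself smooth and bounded. It therefore suffices to prove the stated estimate with this pointwise majorant in place of $|K|$, and I may freely use $r \leq 1/4$ from the definition of $\chi_r$.

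\textbf{Case $|x| \geq 2r$.} Since $\mathrm{supp}\,\chi_r \subset B_r(0)$, for every $y$ in the support we have $|x - y| \geq |x| - r \geq |x|/2$, so, since $\int \chi_r = 1$,
$$
|\chi_r \ast K(x)| \leq \int \chi_r(y)\, |K(x-y)|\di y \leq C_d\bigl(1 + 2^{d-1}|x|^{-(d-1)}\bigr),
$$
which gives the desired bound.

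\textbf{Case $|x| < 2r$.} Here I exploit the $L^\infty$ bound $\|\chi_r\|_{L^\infty} = r^{-d}\|\chi\|_{L^\infty}$ against the local $L^1$-integrability of the singular model kernel. Using $B_r(x) \subset B_{3r}(0)$,
$$
|\chi_r \ast K(x)| \leq \|\chi_r\|_{L^\infty} \int_{B_r(x)} |K(z)|\di z \leq C r^{-d}\int_{B_{3r}(0)} \bigl(1 + |z|^{-(d-1)}\bigr)\di z \leq C r^{-(d-1)},
$$
where I use $\int_{B_{3r}(0)}|z|^{-(d-1)}\di z = C\,r$ since $d > 1$. Finally, $|x| < 2r$ together with $-(d-1) < 0$ give $r^{-(d-1)} \leq 2^{d-1}|x|^{-(d-1)}$, completing the estimate.

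The only mildly delicate point is the second case, where the factor $r^{-d}$ coming from $\|\chi_r\|_{L^\infty}$ must be compensated by an exactly matching gain from integrating $|z|^{-(d-1)}$ over a ball of radius $O(r)$; the resulting power $r^{-(d-1)}$ then fits precisely inside the admissible envelope $|x|^{-(d-1)}$ in the regime $|x| < 2r$. No other step is substantive.
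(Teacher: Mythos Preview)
Your proof is correct, and it takes a genuinely different route from the paper's. The paper does not split according to whether $|x|$ is large or small compared to $r$; instead it multiplies through by $|x|^{d-1}$, uses the algebraic inequality $|x|^{d-1}\le C_d(|x-y|^{d-1}+|y|^{d-1})$, and then splits the remaining integral $\int \frac{|y|^{d-1}}{|x-y|^{d-1}}|\chi_r(y)|\,dy$ at a level $|x-y|=L$, estimating one piece via $\||\cdot|^{d-1}\chi_r\|_{L^\infty}$ and the other via $\||\cdot|^{d-1}\chi_r\|_{L^1}$, finally optimising in $L$ (the optimum being $L\sim r$). Your argument is shorter and more transparent: by using the compact support of $\chi_r$ directly, the case $|x|\ge 2r$ is immediate from $|x-y|\ge |x|/2$, and the case $|x|<2r$ follows from local integrability of $|z|^{-(d-1)}$ with no optimisation needed. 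The paper's approach has the minor advantage that it would adapt to mollifiers that are not compactly supported but merely have enough decay (since it only uses the moments $\||\cdot|^{d-1}\chi_r\|_{L^1}$ and $\||\cdot|^{d-1}\chi_r\|_{L^\infty}$), but in the present setting your argument is cleaner.
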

\begin{proof}

By Lemma~\ref{lem:G}, there exists a constant $C_d$ such that for all $x\in \TT^d$,
\be
|K(x)| \leq C_d (1 + |x|^{-(d-1)}) ,
\ee
where $|\cdot|$ denotes the distance on the torus defined by \eqref{def:torus-metric}.
Therefore
\be
|\chi_r \ast K(x)| \leq C_d \int_{\mc{Q}_d}  \left ( 1 + |x-y|^{-(d-1)} \right ) \chi_r(y) \di y .
\ee

First, observe that
\begin{align}
|\chi_r \ast K(x)| &\leq C_d \| \chi_r \|_{L^1(\TT^d)} + C_d \int_{\mc{Q}_d}  |x-y|^{-(d-1)} \chi_r(y) \di y \\
& \leq C_d \| \chi \|_{L^1(\TT^d)} + C_d \int_{\mc{Q}_d}  |x-y|^{-(d-1)} \chi_r(y) \di y \\
& \leq C(d, \chi) + C_d \int_{\mc{Q}_d}  |x-y|^{-(d-1)} \chi_r(y) \di y .
\end{align}

We then consider the function
\be
|x|^{d-1} \int_{\mc{Q}_d} \frac{\chi_r(y)}{|x-y|^{d-1}} \di y .
\ee
There exists a constant $C_d > 0$ such that
\be
|x|^{d-1} \leq C_d \left ( |x-y|^{d-1} + |y|^{d-1} \right ) .
\ee
Thus
\be \label{reg-kernel-decomp}
|x|^{d-1} \int_{\mc{Q}_d} \frac{\chi_r(y)}{|x-y|^{d-1}} \di y \leq \int_{\mc{Q}_d} | \chi_r(y) | \di y + \int_{\mc{Q}_d} \frac{|y|^{d-1}}{|x-y|^{d-1}} |\chi_r(y)| \di y.
\ee
Note that, for $r \leq \frac{1}{2}$,
\be
\int_{\mc{Q}_d} | \chi_r(y) | \di y = r^{-d} \int_{\mc{Q}_d} \chi \left ( \frac{y}{r} \right ) \di y = \int_{B_1(0)} \chi (y) \di y = C(\chi) .
\ee
Split the second term of \eqref{reg-kernel-decomp} as follows: for any $L \in (0, \frac{1}{2} \sqrt{d} ]$,
\be
\int_{\mc{Q}_d} \frac{|y|^{d-1}}{|x-y|^{d-1}} |\chi_r(y)| \di y \leq \int_{y \in \mc{Q}_d : |x-y| \leq L} \frac{|y|^{d-1}}{|x-y|^{d-1}} |\chi_r(y)| \di y +\int_{y \in \mc{Q}_d : |x-y| > L} \frac{|y|^{d-1}}{|x-y|^{d-1}} |\chi_r(y)| \di y .
\ee
The first term is estimated by
\be
\int_{|x-y| \leq L} \frac{|y|^{d-1}}{|x-y|^{d-1}} |\chi_r(y)| \di y \leq \| |\cdot|^{d-1} \chi_r \|_{L^\infty(\bt^d)} \int_{|y| \leq L} \frac{1}{|y|^{d-1}} \di y \leq C_d L \| |\cdot|^{d-1} \chi_r \|_{L^\infty(\bt^d)}  .
\ee
Observe that, for $x \in \mc{Q}_d$ and $r \leq \frac{1}{2}$,
\be
|x|^{d-1} | \chi_r (x) | = r^{-1}  \left | \frac{x}{r} \right |^{d-1} \chi \left ( \frac{x}{r} \right )  \leq r^{-1}  \| |\cdot|^{d-1} \chi \|_{L^\infty(B_1(0))}  \leq C(\chi) r^{-1}.
\ee
The second term is estimated by
\be
\int_{y \in \mc{Q}_d : |x-y| > L} \frac{|y|^{d-1}}{|x-y|^{d-1}} |\chi_r(y)| \di y \leq L^{1-d} \| |\cdot|^{d-1} \chi_r \|_{L^1(\bt^d)} .
\ee
For the constant, we find that
\begin{align}
\| |\cdot|^{d-1} \chi_r \|_{L^1(\bt^d)} &= r^{-d} \int_{\mc{Q}_d} |x|^{d-1} \left | \chi  \left ( \frac{x}{r} \right )  \right | \di x  = r^{d-1} \int_{B_1(0)} |x|^{d-1} \chi (x) \di x  \leq C(\chi)\,  r^{d-1} .
\end{align}
Altogether this gives
\be
|x|^{d-1} \int_{\mc{Q}_d} \frac{\chi_r(y)}{|x-y|^{d-1}} \di y \leq C(\chi) \[ 1 + r^{-1} \left ( C_d L + L^{1-d} r^d \right )\] .
\ee
Minimising over $L$, the optimal value is $L = C_d r$.
Then
\be
|x|^{d-1} \int_{\mc{Q}_d} \frac{\chi_r(y)}{|x-y|^{d-1}} \di y \leq C(d, \chi).
\ee
This completes the proof.
\end{proof}

\noindent In the next lemma, we use the above bounds to extract a convergent subsequence of approximate solutions, and show that the limit is a weak solution of \eqref{vpme}. This completes the proof of Theorem~\ref{thm:existence}.

\begin{lem} \label{lem:exists-weak-solution}
Let $f_0 \in L^1\cap L^\infty(\bt^d \times \br^d)$ be compactly supported. For each $r > 0$, let $f^{(r)}$ denote the solution of \eqref{vpme-reg} with initial datum $f_0$. Then there exists a subsequence $f^{(r_n)}$ converging to a limit $f \in C([0,\infty) ; W^{-1,2}(\bt^d \times \br^d))$, in the sense that for each time horizon $T > 0$ and for all $\phi \in W^{1,2}(\bt^d \times \br^d)$,
\be
\lim_{n \to \infty} \sup_{t \in [0,T]} \left \lvert \int_{\bt^d \times \br^d} \phi \(f^{(r_n)}(t) - f(t)\)\di x \di v \right \rvert = 0 .
\ee
Moreover, for each $t \in [0,\infty)$, for any $p \in [1,\infty]$ and all $\phi \in L^p(\bt^d \times \br^d)$,
\be
\lim_{n \to \infty} \left \lvert \int_{\bt^d \times \br^d} \phi \(f^{(r_n)}(t) - f(t) \)\di x \di v \right \rvert = 0 .
\ee
Furthermore $f$ is a weak solution of \eqref{vpme} with initial datum $f_0$, for which
\be
\sup_{t \in [0,T]} \int_{\TT^d \times \RR^d} |v|^m_0 f(t,x,v) \di x \di v < + \infty \quad \text{ for all } T>0
\ee
and
\be
 \mc{E}[f(t)] =  \mc{E}[f_0] \qquad \text{ for all } t \in [0, +\infty).
\ee

\end{lem}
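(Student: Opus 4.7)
The approach is a compactness argument: I will extract a subsequential limit of the regularised solutions using the uniform estimates of Lemma~\ref{lem:fr-bounds}, identify the electric field of the limit via the stability results of Section~\ref{sec:electric}, pass to the limit in the weak form of \eqref{vpme-reg}, and finally verify moment propagation and energy conservation.

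\textbf{Compactness and convergence of densities.} From Lemma~\ref{lem:fr-bounds}(v) the family $\{f^{(r)}\}$ is equicontinuous in time with values in $W^{-1,2}(\bt^d \times \br^d)$, while from parts (i)--(ii) it is uniformly bounded in $L^\infty$ with uniformly tight velocity distribution (thanks to the moment bound of order $m_0 > d$). A diagonal Arzel\`a--Ascoli argument will extract a subsequence $f^{(r_n)} \to f$ uniformly on compact time intervals in $W^{-1,2}$, with $f \in C([0,\infty); W^{-1,2})$; the uniform $L^\infty$ bound then upgrades this at each $t$ to weak-$\ast$ convergence in $L^\infty$, hence to convergence tested against any $L^p$ function. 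Taking marginals in $v$ and using the tightness, I will deduce $\rho[f^{(r_n)}] \rightharpoonup \rho[f]$, and the same for the mollified densities $\chi_{r_n} \ast \rho[f^{(r_n)}]$. Under the uniform $L^\infty \cap L^{(d+2)/d}$ control of Lemma~\ref{lem:fr-bounds}(iii) together with uniform second-moment control, these densities will in fact converge to $\rho[f]$ in $W_2$ for each $t$.

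\textbf{Electric field and weak formulation.} Applying Proposition~\ref{prop:Ustab} to $\chi_{r_n} \ast \rho[f^{(r_n)}]$ and $\rho[f]$ will yield $\nabla \bar U_{r_n} \to \nabla \bar U$ and $\nabla \widehat U_{r_n} \to \nabla \widehat U$ strongly in $L^2(\bt^d)$ for each $t$, where $\bar U, \widehat U$ solve \eqref{electric-field} with density $\rho[f]$. The uniform $C^{1,\alpha}$ bounds in \eqref{elec-reg}, combined with a compactness embedding, will upgrade this to uniform convergence $E_{r_n}(t,\cdot) \to E[f(t)](\cdot)$ on $\bt^d$. With this in hand, I will pass to the limit in the weak form of \eqref{vpme-reg} tested against $\phi \in C^1_c([0,T) \times \bt^d \times \br^d)$: the linear transport terms converge by the weak convergence of $f^{(r_n)}$, while $\int E_{r_n} \cdot \nabla_v \phi \, f^{(r_n)}$ converges because the uniformly convergent product $E_{r_n} \nabla_v \phi$ pairs with the weakly convergent $f^{(r_n)}$. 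The limiting Poisson equation $\Delta U = e^U - \rho[f]$ will be recovered by passing to the limit in its weak form, using the $L^2$ convergence of gradients and dominated convergence (justified by the uniform $L^\infty$ bound on $U_{r_n}$) for the exponential term.

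\textbf{Moments and energy conservation.} The moment bound will follow from Lemma~\ref{lem:fr-bounds}(ii) by Fatou's lemma. The energy identity is the most delicate point: conservation for the regularised system gives $\mc{E}_{r_n}[f^{(r_n)}(t)] = \mc{E}_{r_n}[f_0]$, so I must verify continuity of each term of $\mc{E}_r$ as $r \to 0$. For the initial data, $\mc{E}_{r_n}[f_0] \to \mc{E}[f_0]$ will follow because $\chi_{r_n} \ast \rho_{f_0} \to \rho_{f_0}$ in every $L^p$ with $p < +\infty$ (thanks to the compact support of $f_0$), and Proposition~\ref{prop:Ustab} transfers this to continuity of the potential terms. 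At time $t$, the strong $L^2$ convergence of $\nabla U_{r_n}$ together with the uniform $C^{1,\alpha}$ bound \eqref{elec-reg} will handle $\int |\nabla U_{r_n}|^2 \di x$ and $\int U_{r_n} e^{U_{r_n}} \di x$ by dominated convergence, while the kinetic term passes by weak-$\ast$ convergence combined with the uniform integrability of $|v|^2 f^{(r_n)}$ supplied by the moment of order $m_0 > d$. The main obstacle throughout will be precisely this simultaneous control: strong convergence of the potential $U_{r_n}$ together with uniform integrability in $v$ of $f^{(r_n)}$, both made possible by the quantitative estimates of Sections~\ref{sec:electric} and \ref{sec:moments}.
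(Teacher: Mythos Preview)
Your overall plan matches the paper's compactness-plus-identification strategy, and the steps on extracting the subsequence, passing the moments via Fatou, and the energy argument are essentially the same. There is, however, a genuine gap in your identification of the limiting electric field.

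You propose to apply Proposition~\ref{prop:Ustab} to the pair $\chi_{r_n} \ast \rho[f^{(r_n)}]$ and $\rho[f]$ in order to obtain $\nabla U_{r_n} \to \nabla U$ in $L^2(\bt^d)$. But the estimates of Proposition~\ref{prop:Ustab} carry the factor $\max_i \|h_i\|_{L^\infty(\bt^d)}$, and at this stage of the argument no uniform $L^\infty$ control on the densities is available: Lemma~\ref{lem:fr-bounds}(iii) gives only $L^{(d+2)/d}$ and $L^2$ (not $L^\infty$, contrary to what you write), the mollified densities $\chi_{r_n} \ast \rho[f^{(r_n)}]$ lie in $L^\infty$ only with norm of order $r_n^{-d}$, and nothing yet shows that $\rho[f] \in L^\infty$. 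So Proposition~\ref{prop:Ustab} does not give a useful bound here, and the same obstruction prevents a Cauchy-sequence argument between two regularised densities.

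The paper circumvents this by working directly with the potentials rather than the stability estimate. From Lemma~\ref{lem:fr-bounds}(iv) one has uniform $C^{0,\alpha}$ bounds on $U_r$ and $\nabla \widehat U_r$ and a uniform $W^{2,2}$ bound on $\bar U_r$; Arzel\`a--Ascoli and compact Sobolev embedding then extract, for each fixed $t$, subsequential limits $U(t)$, $\nabla \widehat U(t)$ in $C(\bt^d)$ and $\nabla \bar U(t)$ in $L^2(\bt^d)$. The limit $U(t)$ is identified as the solution of $\Delta U = e^U - \rho_f$ by passing to the limit in the \emph{weak} form of this elliptic equation (the exponential term converges by dominated convergence thanks to the uniform $L^\infty$ bound on $U_r$, and $\chi_{r_n} \ast \rho[f^{(r_n)}] \rightharpoonup \rho_f$ weakly in $L^{(d+2)/d}$), and then the uniqueness of $W^{1,2} \cap L^\infty$ solutions to this equation removes the need for further subsequences. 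Once $\nabla U_{r_n}(t) \to \nabla U(t)$ in $L^2$ is established this way, the remaining steps proceed as you outlined.
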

\begin{proof}
To extract the convergent subsequence, we need to make careful use of the equicontinuity in time. The curves
$$
t\mapsto f^{(r)}(t) \in W^{-1,2}(\mathbb T^d\times \mathbb R^d)
$$
are equicontinuous in the norm topology on $W^{-1,2}(\bt^d \times \br^d)$ by \eqref{vpme-reg-equi}. They are also uniformly bounded in $W^{-1,2}(\bt^d \times \br^d)$ since $f^{(r)} \in L^\infty([0,+\infty) ; L^2(\bt^d \times \br^d))$ by \eqref{vpme-reg-Lp}. 
By an Arzel\`{a}-Ascoli type argument we may extract a subsequence $r_n$ such that for all $\phi \in W^{1,2}(\bt^d \times \br^d)$ and all $T > 0$,
\be \label{f-lim}
\lim_{n \to \infty} \sup_{t \in [0,T]} \left \lvert \int_{\bt^d \times \br^d}\(f^{(r_n)}(t) - f(t)\) \phi \di x \di v \right \rvert = 0 ,
\ee
for some $f \in C([0,+\infty) ; W^{-1, 2}(\bt^d \times \br^d))$. In particular, since $C^\infty_c(\mathbb T^d\times \mathbb R^d)\subset W^{1,2}(\mathbb T^d\times \mathbb R^d)$,
\begin{equation}
\label{eq:lim W12}
\lim_{n \to \infty} \sup_{t \in [0,T]} \left \lvert \int_{\bt^d \times \br^d}\(f^{(r_n)}(t) - f(t)\) \phi \di x \di v \right \rvert = 0 , \quad \text{for all}\, \, \, \phi \in C_c^\infty(\mathbb T^d\times \mathbb R^d), \, T>0.
\end{equation}
We now want to prove that the convergence also holds weakly in $L^p(\bt^d \times \br^d)$, for $p \in [1, \infty)$, and in $L^\infty(\bt^d \times \br^d)$ in weak$^*$ sense.
For each fixed $t$, we have the uniform bounds
\begin{align} \label{vpme-reg-Lp-2}
\sup_r \| f^{(r)}(t) \|_{L^p(\bt^d \times \br^d)} \leq \| f_0 \|_{L^p(\bt^d \times \br^d)}, \qquad
\sup_r  \int_{\bt^d \times \br^d}  |v|^2 f^{(r)}(t,x,v) \di x \di v \leq C(f_0) .
\end{align}
This implies that $\{ f^{(r)}(t)  \}_{r>0}$ is relatively compact in $L^p(\bt^d \times \br^d)$ with respect to the weak topology for $p \in [1,\infty)$ and in $L^\infty(\bt^d \times \br^d)$ with respect to the weak$^*$ topology. For each $p \in [1,\infty]$ and $t$ there is a further subsequence $r_{n_k}$ and a limit $g \in L^{p}(\bt^d \times \br^d)$, both depending on $t$ and $p$, such that for all $\phi \in L^{p^*}(\bt^d \times \br^d)$ ($p^*$ being the H\"older conjugate of $p$),
\be
\lim_{k \to \infty} \left \lvert \int_{\bt^d \times \br^d} \phi (f^{(r_{n_k})}(t) - g)\di x \di v \right \rvert = 0 .
\ee
In particular, this holds for $\phi \in C^\infty_c(\mathbb T^d\times \mathbb R^d)\subset L^{p^*}(\mathbb T^d\times \mathbb R^d)$. By \eqref{eq:lim W12}, we deduce that
$$
\int_{\bt^d \times \br^d} f(t)\,\phi\,dx\,dv=\int_{\bt^d \times \br^d} g\,\phi\,dx\,dv \qquad \text{for all}\, \, \,\phi\in C_c^\infty(\mathbb T^d\times \mathbb R^d).
$$
Thus $f(t) = g$. The uniqueness of the limit implies that in fact the whole original subsequence $f^{(r_n)}(t)$ converges to $f(t)$ weakly in $L^p(\bt^d \times \br^d)$ for $p \in [1,+\infty)$ and in $L^\infty(\bt^d \times \br^d)$ in weak$^*$ sense.

Next we show that the convergence also holds for the mass density. Since $f^{(r_n)}(t)$ converges weakly in $L^1(\bt^d \times \br^d)$, for all $\phi \in L^\infty(\bt^d)$ we have
\begin{align}
\lim_{n \to \infty}\int_{\bt^d} \rho[f^{(r_n)}(t)] \phi(x) \di x & = \lim_{n \to \infty} \int_{\bt^d \times \br^d} f^{(r_n)}(t,x,v) \phi(x) \di x \di v \\
& =  \int_{\bt^d \times \br^d} f(t,x,v) \phi(x) \di x \di v  = \int_{\bt^d} \rho_{f}(t,x,v) \phi(x) \di x .
\end{align}
In other words $\rho_{r_n}(t) \rightharpoonup \rho_f(t)$ weakly in $L^1(\bt^d)$. Since, by \eqref{rho-reg-bdd}, $\rho[f^{(r_n)}(t)]$ are uniformly bounded in $L^p(\bt^d)$ for all  $p \in [1,2]$, the convergence also holds in $L^p(\bt^d)$ in weak sense for $p \in [1, 2]$. In particular,
\be
\sup_{t \in [0,T]} \|\rho_f(t)\|_{L^p(\bt^d)} \leq \liminf_n \| \rho[f^{(r_n)}(t)]\|_{L^p(\bt^d)} .
\ee
We deduce that
\begin{align}
\sup_{t \in [0,T]} \|\rho_f(t)\|_{L^{\frac{d+2}{d}}(\bt^d)} \leq C, \qquad \label{limf-rhobdd}
\sup_{t \in [0,T]} \|\rho_f(t)\|_{L^2(\bt^d)} \leq C_T.
\end{align}

\noindent Next, we prove convergence of the electric field. By \eqref{elec-reg}, for any $\alpha \in (0,1)$,
\be \label{U-bdd}
\sup_{r} \sup_{t \in [0,T]} \| U_r(t) \|_{C^{0, \alpha}(\bt^d)} \leq C[\alpha, f_0], \quad \sup_{r} \sup_{t \in [0,T]} \| \nabla \widehat U_r(t) \|_{C^{0, \alpha}(\bt^d)} \leq C[\alpha, f_0] 
\ee
which implies that $U_r(t)$ and $\nabla \widehat U_r(t)$ are equicontinuous on $\bt^d$. Moreover
\be
\sup_{r} \sup_{t \in [0,T]} \| \nabla \bar U_r(t) \|_{W^{1,2}(\bt^d)} \leq C[T, f_0].
\ee

Hence there exists a further subsequence for which $\bar U_{r_{n_k}}(t), \widehat U_{r_{n_k}}(t), \nabla \widehat U_{r_{n_k}}(t)$ converge in $C(\bt^d)$ to some $\bar U(t), \widehat U(t), \nabla \widehat U(t)$ and $\nabla \bar U_{r_{n_k}}(t)$ converges strongly in $L^2(\TT^d)$ to $\nabla \bar U(t)$.

We identify the limit $U(t) = \bar U(t) + \widehat U(t)$, by showing that it is a solution of
\be  \label{eqn:elliptic}
\Delta U(t) = e^{U(t)} - \rho_f(t) .
\ee
The elliptic equation for $U_r(t)$ in \eqref{vpme-reg} in weak form gives that for all $r$ and all $\phi \in W^{1,2} \cap L^{1}(\bt^d)$,
$$
\int_{\bt^d} \nabla U_r(t) \cdot \nabla \phi + \left (e^{U_r(t)} - \chi_r \ast \rho_{f^{(r)}(t)} \right) \phi \di x = 0 .
$$
{
The first term converges since $\nabla U_r(t)$ converges to $\nabla U(t)$ in $L^2(\TT^d)$. The second term converges by dominated convergence, since $U_r(t)$ are uniformly bounded in $C(\bt^d)$.
}
For the term involving $\chi_r \ast \rho[{f^{(r)}(t)}]$, we split
\be \label{Ur-source-split}
\int_{\bt^d} \( \chi_r \ast \rho[{f^{(r)}(t)}]- \rho_f(t) \) \phi \di x = \int_{\bt^d} \( \chi_r \ast \rho[{f^{(r)}(t)}] - \rho[{f^{(r)}(t)}] \) \phi \di x  + \int_{\bt^d}  \(\rho[{f^{(r)}(t)}] - \rho_f(t) \)\, \phi \di x .
\ee
For any $\phi \in L^{\frac{d+2}{2}}(\bt^d)$, we have
\begin{align}
\left \lvert \int_{\bt^d}  \left (\chi_r \ast \rho[{f^{(r)}(t)}] - \rho[{f^{(r)}(t)}] \right ) \phi \di x \right \rvert & =  \left \lvert \int_{\bt^d}  (\chi_r \ast \phi - \phi) \rho[{f^{(r)}(t)}] \di x \right \rvert \\
& \leq \lVert \chi_r \ast \phi - \phi \rVert_{L^{\frac{d+2}{2}}(\bt^d)} \lVert \rho[{f^{(r)}(t)}] \rVert_{L^{\frac{d+2}{d}}(\bt^d)}\\
&  \leq C \lVert \chi_r \ast \phi - \phi \rVert_{L^{\frac{d+2}{2}}(\bt^d)} .
\end{align}
The right hand side converges to zero as $r$ tends to zero by standard results on the continuity of mollification in $L^p$ spaces. For $r = r_{n_k}$, the second term of \eqref{Ur-source-split} converges to zero as $k$ tends to infinity, for all $\phi \in L^{\frac{d+2}{2}}(\bt^d)$, since $\rho[{f^{(r)}(t)}]$ converges to $\rho_f(t)$ weakly in $L^{\frac{d+2}{d}}(\bt^d)$. Hence, for all $\phi \in W^{1,2} \cap L^{\frac{d+2}{2}}(\bt^d)$,
$$
\int_{\bt^d} \nabla U(t) \cdot \nabla \phi + (e^{U(t)} - \rho_f(t)) \phi \di x = 0 .
$$
Since {$U(t) \in C \cap W^{1,2}(\bt^d)$} and $\rho_f(t) \in L^2(\bt^d)$, this extends to all $\phi \in W^{1,2}(\bt^d)$ by density of $L^{\frac{d+2}{2}}(\bt^d)$ in $L^2(\bt^d)$ . In other words $U(t)$ is indeed a weak solution of \eqref{eqn:elliptic}.

Our earlier stability estimates imply that \eqref{eqn:elliptic} has at most one solution in $L^{\infty} \cap W^{1,2}(\bt^d)$, which is therefore $U(t)$ in this case. 
Since the limit of any convergent subsequence is uniquely identified, it follows that for all $t$ we have $U_{r_n}(t) \to U(t)$ in $C(\bt^d)$ and $W^{1,2}(\TT^d)$, where $U(t)$ is the unique $L^{\infty} \cap W^{1,2}(\bt^d)$ solution of \eqref{eqn:elliptic} (that is, without passing to further subsequences).

Next we consider the convergence of the regularised electric field
$$
E_{r_n} [f^{(r_n)}(t)] = - \chi_r \ast \nabla U_{r_n}(t) .
$$
{
Since
\begin{align}
\| E_{r_n}(t) + \nabla U(t) \|_{L^2(\TT^d)} &\leq \| \chi_r \ast (\nabla U_{r_n}(t) - \nabla U(t) )\|_{L^2(\TT^d)} + \|\nabla U(t) - \chi_r \ast \nabla U(t)\| _{L^2(\TT^d)} \\
& \leq  \|\nabla U_{r_n}(t) - \nabla U(t)\|_{L^2(\TT^d)} + \|\nabla U(t) - \chi_r \ast \nabla U(t)\| _{L^2(\TT^d)},
\end{align}
it follows that $E_{r_n}$ converges to $- \nabla U(t)$ strongly in $L^2(\TT^d)$.
}

Finally, we show that $f$ is a weak solution of \eqref{vpme}. Since $f^{(r)}$ is a solution of \eqref{vpme-reg}, for any $\phi \in C^\infty_c([0,\infty) \times \bt^d  \times  \br^d )$ we have
\begin{align}
\int_{\bt^d \times \br^d} f_0(x,v) \phi(0, x, v) \di x \di v + \int_0^\infty \int_{ \bt^d \times \br^d} (\partial_t \phi + v \cdot \nabla_x \phi +  E_r(x) \cdot \nabla_v \phi) f^{(r)} \di x \di v \di t  = 0 .
\end{align}
Since $\partial_t \phi + v \cdot \nabla_x \phi  \in C^\infty_c([0,\infty) \times \bt^d  \times  \br^d )$, \eqref{f-lim} implies that for all fixed $t$, as $n$ tends to infinity,
\begin{align}
\int_{ \bt^d \times \br^d} (\partial_t \phi + v \cdot \nabla_x \phi ) f^{(r_n)} \di x \di v \to \int_{ \bt^d \times \br^d} (\partial_t \phi + v \cdot \nabla_x \phi ) f \di x \di v .
\end{align}
Since
\be
\left \lvert \int_{ \bt^d \times \br^d} (\partial_t \phi + v \cdot \nabla_x \phi ) f^{(r_n)} \di x \di v  \right \rvert \leq \| f_0 \|_{L^\infty(\bt^d \times \br^d)} \int_{\bt^d \times \br^d} \left \lvert \partial_t \phi + v \cdot \nabla_x \phi \right \rvert \di x \di v \in L^1([0,\infty)),
\ee
we deduce from the dominated convergence theorem that for all $\phi \in C^\infty_c([0,\infty) \times \bt^d  \times  \br^d )$, as $n$ tends to infinity,
\begin{align}
\int_0^\infty \int_{ \bt^d \times \br^d} (\partial_t \phi + v \cdot \nabla_x \phi ) f^{(r_n)} \di x \di v \to \int_0^\infty \int_{ \bt^d \times \br^d} (\partial_t \phi + v \cdot \nabla_x \phi ) f \di x \di v .
\end{align}

For the nonlinear term we have the estimate
\begin{align}
\left \lvert  \int_0^\infty \int_{\bt^d \times \br^d} \nabla_v \phi \cdot \left( E_r f^{(r)} + \nabla_x U f \right )\di x \di v \di t \right \rvert & \leq \left \lvert \int_0^\infty \int_{\bt^d \times \br^d} \left ( E_r + \nabla_x U \right ) \cdot \nabla_v \phi \, f^{(r)}  \di x \di v \di t \right \rvert  \\
& + \left \lvert  \int_0^\infty \int_{ \bt^d \times \br^d} - \nabla_x U \cdot \nabla_v \phi \, (f^{(r)} -  f)  \di x \di v \di t \right \rvert .
\end{align}
Now let $T>0$ be such that $\operatorname{supp}{\phi} \subset [0,T] \times \bt^d \times \br^d$. For the first term, we use the fact that $E_{r_n}$ converges to $- \nabla U$ strongly in $L^2(\TT^d)$ for each $t$. That is, for fixed $t$
\begin{align}
\int_{\bt^d \times \br^d} \left ( E_r + \nabla_x U \right ) \cdot \nabla_v \phi \, f^{(r)}  \di x \di v & \leq \int_{\RR^d} \| E_{r_n}(t) + \nabla U(t) \|_{L^2(\TT^d)} \| \nabla_v \phi(\cdot, v ) \|_{L^2(\RR^d)} \di v \\
& \leq C_\phi  \| E_{r_n}(t) + \nabla U(t) \|_{L^2(\TT^d)} \leq C(\phi, T , f_0) .
\end{align}
Then, using dominated convergence for the time integral, as $r_n \to 0$ we have
\be
\left  \lvert \int_0^\infty \int_{\bt^d \times \br^d} \left ( E_{r_n} + \nabla_x U \right ) \cdot \nabla_v \phi \, f^{(r_n)}  \di x \di v \di t \right \rvert \to 0 .
\ee
Similarly, for the second term we use that, for each $t$, since $\nabla U(t) \cdot \nabla_v \phi \in L^2(\bt^d \times \br^d)$,
\be
\left \lvert \int_{ \bt^d \times \br^d} \nabla U \cdot \nabla_v \phi \, (f^{(r_n)} -  f)  \di x \di v  \right \rvert \to 0 .
\ee
Combining this with the bound
\be
\left \lvert  \nabla U \cdot \nabla_v \phi \, (f^{(r)} -  f) \right \rvert  \leq C(T, f_0)  \, | \nabla_v \phi | \in L^1([0, \infty) \times \bt^d \times \br^d ),
\ee
which follows from \eqref{vpme-reg-Lp} and \eqref{elec-reg}, we conclude that, as $n$ tends to infinity,
\be
\int_0^\infty \int_{\bt^d \times \br^d} \nabla_v \phi \cdot E_{r_n} \, f^{(r_n)} \di x \di v \di t \to - \int_0^\infty \int_{\bt^d \times \br^d} \nabla_v \phi \cdot \nabla U f \di x \di v \di t .
\ee
Hence, for all $\phi \in C^\infty_c([0,+\infty) \times \bt^d \times \br^d)$,
\be
\int_{\bt^d \times \br^d} f_0(x,v) \phi(0, x, v) \di x \di v + \int_0^\infty \int_{\bt^d \times \br^d)} (\partial_t \phi + v \cdot \nabla_x \phi -  \nabla U (x) \cdot \nabla_v \phi) f \di x \di v \di t = 0 .
\ee
Thus $f$ is a weak solution of \eqref{vpme}.

{

We next show that the bounds on the velocity moments pass to the limit. Recall that there exists a constant $C_{m_0,0}$ independent of $r$ such that for all $r$,
\be
\sup_{t \in[0,T]} \int_{\TT^d \times \RR^d} |v|^{m_0} f^{(r)}(t,x,v) \di x \di v \leq C_{m_0,0} .
\ee
Then, since $f^{(r)}$ converges to $f$ weakly in $L^1(\TT^d \times \RR^d)$, for all $R > 0$ we have
\be
\int_{\TT^d \times \RR^d} |v|^{m_0} \mathbf{1}_{|v| \leq R} \, f(t,x,v) \di x \di v = \lim_{r \to 0} \int_{\TT^d \times \RR^d} |v|^{m_0} f^{(r)}(t,x,v)  \mathbf{1}_{|v| \leq R} \, \di x \di v \leq C_{m_0,0} .
\ee
Therefore, letting $R\to \infty,$ 
\be
\int_{\TT^d \times \RR^d} |v|^{m_0} f(t,x,v) \di x \di v \leq C_{m_0,0} .
\ee
Finally, we show that the regularised energy functionals $\mc{E}_r[f^{(r)}]$ converge to the energy functional $\mc{E}[f]$. Since
\be
\sup_r \sup_{t \in[0,T]} \int_{\TT^d \times \RR^d} |v|^{m_0} f^{(r)}(t,x,v) \di x \di v \leq C_{m_0,0} ,
\ee
from the weak $L^1$ convergence of $f^{(r)}$ to $f$ it follows that
\be
\int_{\TT^d \times \RR^d} |v|^2 f(t,x,v) \di x \di v = \lim_{r \to 0} \int_{\TT^d \times \RR^d} |v|^2 f^{(r)}(t,x,v) \di x \di v .
\ee
Since the convergence of $E_r$ to $- \nabla U$ occurs strongly in $L^2(\TT^d \times \RR^d)$ for all $t$, it follows that
\be
\int_{\TT^d} |\nabla_x U(t)|^2 \di x = \lim_{r \to 0} \int_{\TT^d} |\nabla_x U_r(t)|^2 \di x .
\ee
Lastly, since $U_r(t)$ converges to $U(t)$ in $C(\TT^d)$, we have
\be
\int_{\TT^d} U(t) e^{U(t)} \di x = \lim_{r \to 0} \int_{\TT^d} U_r(t) e^{U_r(t)} \di x
\ee
We conclude that
\be
\mc{E}[f(t)] = \lim_{r \to 0} \mc{E}_r [f^{(r)}(t)] =  \lim_{r \to 0} \mc{E}_r [f_0] = \mc{E} [f_0] .
\ee
}
\end{proof}

\subsection{Global Well-posedness}
{
Lastly, we complete the proof of the main result, Theorem~\ref{thm:main}, showing global well-posedness for the VPME system given initial data decaying sufficiently fast at infinity.

\begin{proof}
By Lemma~\ref{lem:exists-weak-solution}, under the assumptions of Theorem~\ref{thm:main} there exists a weak solution $f$ of the VPME system \eqref{vpme}. We will show that $\rho_f \in L^{\infty}_{\text{loc}}([0, + \infty) ; L^\infty(\TT^d))$, so that uniqueness will then follow from Theorem~\ref{thm:unique}.

We first derive estimates on the electric field $E$.
Since the initial datum $f_0$ is assume to have a finite velocity moment of order $m_0$, the solution also has moments of this order: for all $T>0$, there exists a constant $C_T>0$ such that
\be
\sup_{t \in [0,T]} \int_{\TT^d \times \RR^d} |v|^{m_0} f(t,x,v) \di x \di v \leq C_T .
\ee
We apply the interpolation estimate from Lemma~\ref{lem:mom-interpolation} to deduce an estimate on $\rho_f$. Since $m_0 > d(d-1)$, we obtain
\be
\sup_{t \in [0,T]} \| \rho_f \|_{L^{d + \alpha}(\TT^d)} \leq C_T
\ee
for some $\alpha > 0$. By regularity estimates for the Poisson equation and Sobolev inequalities, $\bar E$ is H\"{o}lder continuous with the following estimate for some $\gamma > 0$:
\be
\sup_{t \in [0,T]} \| E(t,\cdot) \|_{C^{0, \gamma}(\TT^d)} \leq C_T .
\ee
We also have the uniform $L^{\frac{d+2}{d}}(\TT^d)$ estimate from the conservation of energy:
\be
\sup_{t\in [0,T]} \| \rho_f(t, \cdot) \|_{L^{\frac{d+2}{d}}(\TT^d} \leq C_0 .
\ee
Proposition~\ref{prop:regU} then implies that
\be
\| \widehat E \|_{C^{0,\gamma}(\TT^d)} \leq C_0 .
\ee
We deduce that $E \in L^\infty(\TT^d)$ with $\sup_{t \in[0,T]} \| E(t, \cdot) \|_{L^\infty(\TT^d)} \leq C_T$. 
We then propagate the uniform decay estimate on $f$ by using the characteristic flow. Namely,
\be
f(t,x,v) = f_0 \left ( X(0; t,x,v), V(0;,t,x,v) \right ) \leq \frac{C_0}{1 + |V(0; t,x,v)|^{k_0}} .
\ee
Since $E$ is bounded in $L^\infty$, we obtain an estimate of the form
\be
f(t,x,v) \leq \frac{C_0}{1 + \(|v| - C_T t\)_+^{k_0}} 
\ee
Since $k_0 > d$, this bound can be integrated with respect to $v \in \RR^d$ to obtain
\be
\rho_{f}(t,x) \leq C(T, f_0) ,
\ee
which completes the proof.

\end{proof}
}

\medskip

{\it Acknowledgements:} This work was supported by the UK Engineering and Physical Sciences Research Council (EPSRC) grant EP/L016516/1 for the University of Cambridge Centre for Doctoral Training, the Cambridge Centre for Analysis; and partly funded by the European Research Council (ERC) under the European Union's Horizon 2020 research and innovation programme (grant agreement No 726386).
The authors wish to acknowledge Maxime Hauray, Cl\'ement Mouhot, and Claude Warnick for useful comments on a preliminary version of this manuscript.

\end{document}